\documentclass[final,12pt]{amsart}

\usepackage[pages=all, color=black, position={current page.south}, placement=bottom, scale=1, opacity=1, vshift=5mm]{background}

\usepackage[margin=1in]{geometry}
\usepackage{amssymb}
\usepackage{amsmath}
\usepackage{amsmath}
\usepackage{amsthm}
\usepackage{amssymb}
\usepackage{float}
\usepackage{multirow}
\usepackage{tikz}
\usepackage{enumerate}
\usepackage[caption=false]{subfig}
\usepackage[utf8]{inputenc}
\usepackage{hyperref}
\usepackage{algorithm}
\usepackage[export]{adjustbox} 
\usepackage{oubraces}
\usepackage{stix}
\usepackage{stmaryrd}
\theoremstyle{plain}
\newtheorem{theorem}{Theorem}[section]

\newtheorem{lemma}[theorem]{Lemma}

\theoremstyle{definition}

\newtheorem{example}{Example}
\newtheorem{Remark}[theorem]{Remark}

\setcounter{secnumdepth}{2}

\newcommand\bx{\boldsymbol{x}}
\newcommand\bbeta{{\boldsymbol{\beta}}}
\newcommand\calP{{\mathcal{P}}}
\newcommand\calF{\mathcal{F}}

\newcommand\bU{\mathbf{U}}
\newcommand\bV{\mathbf{V}}
\newcommand\bW{\mathbf{W}}

\newcommand\bE{\mathbf{E}}

\usepackage{graphicx, color}
\graphicspath{{fig/}}

\usepackage{algorithm, algpseudocode}
\usepackage{mathrsfs} 
\usepackage{lipsum}

\author{
  Abdolreza Amiri
  }
  \address{
  Abdolreza Amiri
  \thanks{
    Department of Mathematics and Statistics,
    University of Strathclyde, 26 Richmond Street, Glasgow G1 1XK, UK,
    {\tt{abdolreza.amiri@strath.ac.uk}}.
}
}

\author{
  Gabriel R.  Barrenechea
  }
  \address{
  Gabriel R.  Barrenechea
  \thanks{
    Department of Mathematics and Statistics,
    University of Strathclyde, 26 Richmond Street, Glasgow G1 1XK, UK,
    {\tt{gabriel.barrenechea@strath.ac.uk}}.
}
}

\author{
  Tristan Pryer
}
\address{
  Tristan Pryer
  \thanks{
    Department of Mathematical Sciences,
    University of Bath, Bath BA2 7AY, UK
    {\tt{tmp38@bath.ac.uk}}.
}}

\title[Preserving the eigenvalue range]{A finite element method preserving the eigenvalue range
	of symmetric tensor fields}

\date{\today}

\begin{document}

\maketitle

\begin{abstract}
  This paper presents a finite element method that
    preserves (at the degrees of freedom) the eigenvalue range of the solution of tensor-valued
  time-dependent convection--diffusion equations.  Starting from a high-order spatial
  baseline discretisation (in this case, the CIP stabilised finite
  element method), our approach formulates the fully
    discrete problem as a variational inequality posed on a closed
  convex set of tensor-valued functions that respect the same
  eigenvalue bounds at their degrees of freedom. The
  numerical realisation of the scheme relies on the
  definition of a projection that, at each node, performs the
  diagonalisation of the tensor and then truncates the eigenvalues
 to lie within the prescribed bounds. The temporal
  discretisation is carried out using the implicit Euler method, and
  unconditional stability and optimal-order error
  estimates are proven for this choice. Numerical experiments confirm
  the theoretical findings and illustrate the method's ability to
  maintain eigenvalue constraints while accurately approximating
  solutions in the convection-dominated regime.
\end{abstract}

\noindent\underline{Keywords :} Tensor-valued PDEs, eigenvalue range, continuous interior penalty, finite element method,  convection-diffusion equations,  error estimates

\section{Introduction}
\label{sec:intro}

\subsection*{Motivation.}
Tensor-valued partial differential equations (PDEs) arise in many
settings where the primary unknown is more naturally represented by a
symmetric tensor field than by a scalar or a vector.  Examples include
elasticity \cite{Hughes2012}, non-Newtonian fluid mechanics
\cite{Owens2002}, diffusion tensor imaging \cite{Basser1994} and
liquid crystal modelling \cite{DeGennes1993}.  In these applications,
the tensor's eigenvalues typically have a direct physical meaning and
are required to satisfy pointwise bounds. In diffusion tensor imaging,
positive definiteness is essential to represent meaningful diffusion
processes \cite{Arsigny2006}. In computational solid mechanics,
constraints on the eigenvalues of stress and strain are tied to
stability and admissibility of material responses \cite{Simo1992}. In
viscoelastic and other non-Newtonian models the conformation or
extra-stress tensor is positive definite
\cite{ashby2025discretisation} and, in some cases such as FENE-P,
additional constraints such as a bounded trace are imposed, see
\cite{Owens2002}.

Standard discretisations for convection-diffusion-type PDEs do not
typically enforce eigenvalue constraints. In particular, continuous
Galerkin finite element methods \cite{EG21-I,Brenner2008}, finite
volume schemes \cite{LeVeque2002} and spectral methods
\cite{Canuto2007} may generate approximations whose eigenvalues
violate prescribed bounds, especially in convection-dominated regimes
or near sharp fronts. Such violations can lead to non-physical states
and may trigger numerical instabilities or loss of robustness, even
when the underlying continuous model preserves admissibility.

\subsection*{Contributions of this work.}
We develop a finite element method for time-dependent
convection--diffusion equations with symmetric tensor unknowns that
preserves a prescribed eigenvalue range $[\epsilon,\kappa]$, with
$\epsilon < \kappa$, at the degrees of freedom. The method extends
recent nodally bound-preserving finite element ideas from scalar to
tensor-valued problems by combining a baseline stabilised
discretisation with a convex admissible set of tensor-valued finite
element functions whose nodal values satisfy the eigenvalue bounds.
The fully discrete scheme is formulated and analysed as a variational
inequality over this admissible set. For its numerical realisation we
employ an iterative solver based on a nodal projection defined by local
spectral decomposition followed by eigenvalue truncation. For time
discretisation we use implicit Euler, and for space discretisation we
use continuous interior penalty (CIP), which provides stabilisation in
the convection-dominated regime \cite{MR2068903,BF09}. We prove
unconditional stability and derive a priori error estimates for the
fully discrete method. Numerical experiments illustrate that the
approach prevents eigenvalue over- and undershoots observed in the
baseline scheme, while retaining good accuracy for both smooth and
non-smooth data.

\subsection*{Relation to the literature.}
The design of bound-preserving discretisations, often discussed in the
context of the discrete maximum principle (DMP), has a long history.
Early analytical results for maximum principles in finite element
methods date back to \cite{CR73}. In general, however, standard
Galerkin schemes on arbitrary meshes do not satisfy the DMP for
convection--diffusion problems, leading to spurious oscillations and
unphysical overshoots. This difficulty motivated a range of remedies.
One classical approach is to add artificial diffusion or to employ
upwinding to stabilise the scheme, including early
artificial-viscosity ideas in the finite element setting
\cite{kikuchi1977discrete} and nonlinear Petrov--Galerkin upwind
constructions designed to enforce a maximum principle in
convection-dominated regimes \cite{MH85}. For piecewise linear
elements,  the DMP can be guaranteed under restrictive geometric or
algebraic conditions, for example acute simplicial meshes or an
$M$-matrix structure of the stiffness operator
\cite{brandts2008discrete}. As a consequence, many modern
bound-preserving methods are necessarily nonlinear and rely on added
stabilisation, limiters or algebraic corrections to eliminate new
extrema, consistent with the classical limitations on linear
high-order monotone schemes. Representative finite element approaches
include CIP-type stabilisation augmented by nonlinear diffusion to
recover a DMP \cite{BE05} and edge-based nonlinear diffusion
mechanisms closely connected to algebraic flux-correction techniques
\cite{BBK17-NumMath}. In parallel, the flux-corrected transport (FCT)
and algebraic flux-correction (AFC) paradigms modify the algebraic
form of the discrete operator by limiting antidiffusive fluxes so as
to enforce bounds while retaining accuracy \cite{Kuz07}. Variants and
extensions of these ideas, including multiscale and
constraint-enforcement perspectives, are discussed in \cite{EHS09}. A
broad and up-to-date account of monotonicity-preserving and
DMP-related finite element methodology can be found in the recent
monograph \cite{BJK25} and the references therein.

Beyond maximum-principle preservation for scalar problems, a closely
related body of work concerns invariant-domain and
structure-preserving discretisations for systems, where admissibility
is expressed through convex invariant sets, entropy inequalities or
energy dissipation.  In the discontinuous Galerkin setting, such
questions are often addressed through a combination of stabilisation
and limiting, with analysis frequently organised around entropy or
relative-entropy frameworks, see for instance
\cite{giesselmann2015posteriori,giesselmann2016reduced}.  Although
these developments do not directly target tensor eigenvalue
constraints, they provide a useful methodological backdrop, they
highlight how nonlinear admissibility requirements typically enter
either through limiters applied to a baseline high-order method or
through constrained variational principles. 

For {\em tensor-valued} problems, the literature on bound-preserving
schemes is far more limited. To the best of our knowledge, the main
existing contributions that explicitly preserve eigenvalue ranges for
transported symmetric tensors are \cite{lohmann2017flux,lohmann2019algebraic}.
These works extend FCT and AFC ideas to matrix-valued unknowns, with
limiting strategies designed to prevent the loss of key admissibility
properties such as positive definiteness during transport. Beyond
these contributions, eigenvalue control for tensor quantities is more
commonly addressed indirectly, for example through problem-specific
reconstructions or limiters in hyperbolic and remap-type algorithms,
rather than through a systematic finite element framework.

A different line of work enforces admissibility through nonlinear
transformations rather than direct limiting. In the context of
symmetric positive-definite (SPD) tensor fields, geometric
constructions based on the matrix logarithm provide parametrisations
in which SPD is preserved by construction \cite{Arsigny2006,Moakher2005}.
In non-Newtonian fluid mechanics, the log-conformation representation
is a prominent example of this philosophy \cite{FK04}. While such
transformations can be effective, they introduce additional nonlinear
structure and can complicate both analysis and implementation when
combined with standard stabilisation and time-stepping strategies,
particularly in convection-dominated regimes.

The present work is most closely related to recent developments on
{\em nodally} bound-preserving finite element methods for scalar
problems. The framework in \cite{BGPV23} constrains nodal degrees of
freedom to lie in an admissible range by projection onto a convex
nodal set and, in certain settings, admits an interpretation as a
variational inequality. This viewpoint aligns with the broader theory
of constrained energy minimisation and variational inequalities that
underpins many finite element treatments of inequality constraints
\cite{Stampacchia,kirby2024high,keith2024proximal}. Time-dependent
extensions to reaction--convection--diffusion and related parabolic
problems have been developed in
\cite{amiri2024nodally,amiri2025nodally}, with the essential mechanism
remaining a baseline discretisation coupled to a nonlinear
admissibility enforcement at each time level.

These nodal approaches sit alongside a closely related line of work on
invariant-domain and convex-limiting strategies for continuous finite
element discretisations of hyperbolic systems, where admissibility is
formulated in terms of convex invariant sets and recovered a
posteriori by limiting a high-order update towards a low-order
invariant-domain update
\cite{guermond2014second,guermond2016invariant,guermond2019invariant}.
In the discontinuous Galerkin setting, a complementary and extensive
literature establishes positivity-preserving and invariant-domain
ideas via cell-average constraints and limiters compatible with SSP
time integrators \cite{zhang2010positivity,zhang2011positivity}.
Although these strands differ in their admissibility variables (nodal
values versus cell averages or local invariant sets) and in the
underlying discretisation class (continuous versus discontinuous
Galerkin), they share the same organising principle, a baseline scheme
is augmented by a nonlinear correction that enforces an admissible set
without destroying accuracy away from constrained regions.

Further developments of nodal admissibility ideas beyond the original
scalar convection--diffusion setting include extensions to
drift--diffusion-type models and to hyperbolic convection--reaction
problems, illustrating the flexibility of the projection/constraint
paradigm across different PDE structures
\cite{barrenechea2025nodally,ashby2025nodally}.  Here, this line of
ideas is extended from scalar to tensor-valued convection--diffusion
equations by formulating a variational inequality directly on an
admissible set defined through nodal eigenvalue constraints, using a
CIP baseline discretisation. This provides a route to enforce
physically motivated tensor constraints while retaining a standard
finite element setting and allowing convection-dominated
stabilisation.

The paper is organised as follows. In Section \ref{sec:pre} we
introduce notation and preliminary results. In Section~\ref{The
  modelProblem} we present the model problem, the finite element
spaces, the key inequalities used in the analysis and the admissible
convex set. Section \ref{Sec:FEM} introduces the fully discrete
variational inequality formulation. Stability and convergence are
proved in Section \ref{Sec:Error}. Numerical experiments are presented
in Section \ref{sec:numerics}.  Concluding remarks and possible
extensions are given in Section \ref{sec:conc}.

\section{Notation and Preliminaries}
\label{sec:pre}

Let us consider the space of symmetric tensors of dimension $d \times d$, $1 \leq d \leq 3$.
We denote this space by $\mathbb{S}_d \subset \mathbb{R}^{d \times d}$, while $\mathbb{S}_d^+$ stands for the subset of positive semidefinite tensors.
Table~\ref{tab:notation} summarises the symbols frequently used in this work.

\begin{table}[h!]
	\centering
	\caption{Summary of symbols frequently used in this work.}
	\label{tab:notation}
	\begin{tabular}{|c|l|}
			\hline
			\textbf{Symbol} & \textbf{Description} \\ \hline
			$d$             & Dimension of the tensor \\ 
			$m$             & Dimension of the space (i.e., $\Omega\subset\mathbb{R}^m$) \\ 
			$k,\ell$        & Indices corresponding to tensor components, i.e. $1 \leq k,\ell \leq d$ \\ 
			$\mathbb{S}_d$  & Space of symmetric tensors in $\mathbb{R}^{d \times d}$ \\ 
			$\mathbf{0}$    & Zero tensor \\ 
			$\mathbf{I}$    & Identity tensor  \\ 
			$v_{k\ell}$     & Tensor entry of $\mathbf{V}\in \mathbb{S}_d $ \\
			$\tilde{\mathbf{V}}=\mathrm{diag}(\lambda_{1}, \lambda_{2},\ldots,\lambda_{d})$ & Diagonal tensor with eigenvalues of $\mathbf{V}\in \mathbb{S}_d$ on its diagonal \\ 
			$\mathbf{x}$    & Vector in $\mathbb{R}^{m}$ \\ 
			$\mathbf{V} =\mathbf{Q}_{\bV}\tilde{\mathbf{V}}\mathbf{Q}^{T}_{\bV}$ & Spectral decomposition of $\mathbf{V} \in \mathbb{S}_d$  
		\\ \hline
		\end{tabular}
\end{table}

Additionally, we adopt the notation that lowercase bold letters such as $\mathbf{v}\in \mathbb{R}^{d}$ denote $d$-dimensional vectors, while uppercase bold letters, such as $\mathbf{U}$ and $\mathbf{V}$, are reserved for tensor quantities.

Since all the tensors in this manuscript are symmetric, they are diagonalisable. So, they admit the
decomposition
\begin{equation}
	\mathbf{V}=\mathbf{Q}_{\bV}\tilde{\mathbf{V}}\mathbf{Q}^{T}_{\bV}\quad\textrm{where}\quad
	\tilde{\mathbf{V}}:={\rm diag}(\lambda_{1}, \lambda_{2},\ldots,\lambda_{d})\quad,\quad   \mathbf{Q}_{\bV}^{T}\mathbf{Q}_{\bV}=\mathbf{I}.
\end{equation}
Above, as standard, $\tilde{\mathbf{V}}$ denotes the diagonal tensor of sorted eigenvalues $\lambda_{k}(\mathbf{V})$ of $\mathbf{V}$, and $\mathbf{Q}_{\bV}$ is the tensor of corresponding eigenvectors of $\mathbf{V}$.
Whenever it is clear from the context, we omit $\mathbf{V}$
from $\lambda_{k}(\mathbf{V})$ for simplicity and we use $\lambda_{k}$.

Since this work will focus on preserving the eigenvalue range of symmetric tensors, for $\epsilon\ge 0$ 
 and  $\kappa> \epsilon$, we  define the following convex subset of $\mathbb{S}_d$ 
\begin{align}
	\mathbb{S}_{d}^{\epsilon,\kappa}:=\{\mathbf{V}\in \mathbb{S}_d :\epsilon	\leq \lambda_{1}^{}\leq \lambda_{2}^{}\leq\ldots\leq \lambda_{d}^{}\leq \kappa \}.\label{eq161}
\end{align} 
(This is a closed convex set in $\mathbb{S}_d$.)
Here  $\lambda_{k}^{}$, $k=1,\ldots,d$ are sorted eigenvalues of $\mathbf{V}$. 
Using this convex subset, we decompose 
every element $ \mathbf{V}\in \mathbb{S}_d$  as the sum
$\mathbf{V}=\mathbf{V}^{+}+\mathbf{V}^{-}$, where $\mathbf{V}^{+}$ and $\mathbf{V}^{-}$ are given by
\begin{align}
	\mathbf{V}^{+}= \mathbf{Q}_{\bV}\tilde{\mathbf{V}}^{+}\mathbf{Q}^{T}_{\bV},\label{Vplus}
\end{align}
where the $k$th diagonal entry of $\tilde{\mathbf{V}}^{+}$ is defined as 
\begin{align}
(\lambda_{k}(\mathbf{V}))^{+}=\max\Big\{\epsilon,\min\{\lambda_{k}(\mathbf{V}),\kappa\}\Big\} \label{lamoperator}
\end{align}
 and 
\begin{align}
	\mathbf{V}^{-}=\mathbf{V}-\mathbf{V}^{+}. \label{Vminus}
\end{align}

In this paper we will make use of the Frobenius norm \( \|\cdot\|_F \). Therefore, we summarise some of its most important properties. According to the invariance of the trace \(\text{tr}(\cdot)\) under cyclic permutations,
\[
\text{tr}(\mathbf{V}\mathbf{W}) = \text{tr}(\mathbf{W}\mathbf{V}), \quad \text{for all } \mathbf{V}, \mathbf{W} \in \mathbb{R}^{d \times d},
\]
and the definition of the Frobenius inner product \((\cdot, \cdot)_F\),
\begin{equation}
(\mathbf{V}, \mathbf{W})_F := \mathbf{V} : \mathbf{W} = \sum_{k,\ell=1}^d v_{k\ell}w_{k\ell} = \text{tr}(\mathbf{V}^\top \mathbf{W}) = \text{tr}(\mathbf{W}^\top \mathbf{V}), \label{Ferobino}
\end{equation}
the Frobenius norm \( \|\cdot\|_F \)  satisfies the identity
\begin{equation}
\|\mathbf{V}\|_F^2 := (\mathbf{V}, \mathbf{V})_F = \text{tr}(\mathbf{V}^\top \mathbf{V}) = \text{tr}((\mathbf{Q}^\top \mathbf{V}\mathbf{Q})^\top \mathbf{Q}^\top \mathbf{V}\mathbf{Q}) = \sum_{k=1}^d \lambda_k^2, \quad \text{for all } \mathbf{V} \in \mathbb{S}_d. \label{2.1}
\end{equation}

We will adopt standard notations for Sobolev spaces  in line with,
e.g., \cite{EG21-I}. For $D\subseteq\mathbb{R}^{m}$, $m=1,2,3$, we denote by
$\|\cdot\|_{0,p,D} $ the $L^{p}(D)$-norm; when $p=2$ the subscript $p$
will be omitted and we only write $\|\cdot\|_{0,D} $. In addition, for
$s\geq 0$, $p\in [1,\infty]$, we denote by $\| \cdot \|_{s,p,D}$ ($|
\cdot |_{s,p,D}$) the norm (seminorm) in $W^{s,p}(D)$; when $p=2$, we
define $H^{s}(D)=W^{s,2}(D)$, and again omit the subscript $p$ and only write $\|\cdot \|_{s,D}$
($| \cdot |_{s,D}$).  The following space will also be used repeatedly within the text
\begin{equation}
	H^{1}_{0}(D)=\left\{ v \in H^1(D) : v = 0 \;  {\rm on} \;  \partial D \right\}, \label{space}
\end{equation}
and the space $H^{-1}(D)$ which is the dual of $H^{1}_{0}(D)$. 

For $1\leq p\leq +\infty$, $L^{p}((0,T);W^{s,p}(D))$ is the space defined by
\begin{equation*}
	L^{p}((0,T);W^{s,p}(D))=\left\{u(t,\cdot)\in W^{s,p}(D)~~\text{for almost all $t \in [0,T]$}  : t\mapsto \| u(t,\cdot) \|_{s,p,D}\in L^{p}(0,T)\right\} ,
\end{equation*}
which is a Banach space for the norm
\begin{equation*}
	\begin{split}	
		\|	u \|_{L^{p}((0,T);W^{s,p}(D))}	=	\left\{ \begin{array}{ll} \left(\int_{0}^{T}\| u \|_{s,p,D}^{p} dt\right)^{\frac{1}{p}} \hspace{2.3cm}{\rm if}  \hspace{0.2cm}1\leq p < \infty,\\
			\\
			{\rm ess} \hspace{0.05cm}\sup_{t\in (0,T)}\| u \|_{s,p,D}\hspace{1.9cm}{\rm if}\hspace{0.3cm} p=\infty.
		\end{array} \right.
	\end{split}		
\end{equation*}

The extension of the Sobolev norms to the vector and tensor-valued cases is straightforward. In fact,
the inner product in $L^2(D)^{d\times d}$ is defined as
\begin{equation}
	(\mathbf{U}, \mathbf{V})_{D} = \sum_{k,\ell=1}^d \int_D u_{k\ell}v_{k\ell}   {\rm d}\bx =\int_D \mathbf{U}: \mathbf{V} {\rm d}\bx,\label{innertensor}
\end{equation}
which induces the norm 
$ \|\mathbf{U}\|_{0,D} =(\mathbf{U}, \mathbf{U})_{D}^{\frac{1}{2}}$.   Using similar definitions for the derivatives
we can extend the Sobolev norms to vector and tensor-valued quantities. Finally, we will use the following
Sobolev space
\begin{equation}
	(H^{1}_{0}(D))^{d\times d,\textrm{sym}}=\left\{ \mathbf{V} \in (H^1_0(D))^{d\times d} : \mathbf{V}\in \mathbb{S}_d^{}\; \textrm{a.e. in}  \; D\right\} .\label{space-sym}
\end{equation}

\section{The model problem} \label{The modelProblem}

Let $\Omega$ be an open bounded Lipschitz domain in $\mathbb{R}^{m}$ ($m=2,3$) with polyhedral boundary $\partial \Omega$, and $T>0$.
For a given $\mathbf{F}\in (L^{2}((0,T);L^{2}(\Omega)))^{d\times d}$, we consider the following convection-diffusion problem:
\begin{equation}
	\begin{split}
		\left\{
		\begin{aligned}
			\partial_{t}\mathbf{U} - {\rm div}\big(\mathcal{D}  \nabla \mathbf{U}\big) + \bbeta \cdot \nabla \mathbf{U} + \mu \mathbf{U} &= \mathbf{F}  &&\text{in } (0,T] \times \Omega, \\
			\mathbf{U}(\bx, t) &= 0  &&\text{on } (0,T] \times \partial \Omega, \\
			\mathbf{U}(\cdot, 0) &= \mathbf{U}^{0} &&\text{in } \Omega,
		\end{aligned}
		\right.\label{CDR}
	\end{split}
\end{equation}
where $\mathcal{D}=(d_{ij})_{i,j=1}^{m}\in [L^{\infty}(\Omega)]^{m\times m}$ is symmetric and uniformly strictly positive definite a.e.~in $\Omega$, $\bbeta=( \beta_{i})_{i=1}^{m}\in L^{\infty}((0,T);W^{1,\infty}(\Omega))^{m}$,  and $\mu\in \mathbb{R}^{+}_{0}$, respectively, are the diffusion coefficient, the convective field, and the reaction coefficient. 
We will assume that $ {\rm div} \bbeta=0$ in $\Omega\times[0,T]$.

\begin{Remark}[Componentwise interpretation of the transport terms]
	In equation \eqref{CDR}, the diffusion term ${\rm div}(\mathcal{D}  \nabla \mathbf{U})$ involves the action of the tensor $\mathcal{D} \in \mathbb{R}^{m \times m}$ on $\nabla \mathbf{U} \in \mathbb{R}^{d \times d\times m}$  which is a multilinear operator (see e.g. \cite{amiri2021approximation}). The product $\mathcal{D}  \nabla \mathbf{U}$ is understood as the following tensor product 
	\begin{equation}
		\mathcal{D} \nabla \mathbf{U}   =  \begin{pmatrix}
		\mathcal{D}\nabla U_{11} & \cdots & \mathcal{D}\nabla U_{1d} \\
		\vdots & \ddots & \vdots \\
		\mathcal{D}\nabla U_{d1} & \cdots & \mathcal{D}\nabla U_{dd}
	\end{pmatrix},
	\end{equation}
	so that the $(k,\ell)$-th component of the diffusion term is ${\rm div}\big( \mathcal{D}\nabla U_{k\ell}\big)$. The convection term $\bbeta \cdot \nabla \mathbf{U} $ is defined similarly, i.e. $(\bbeta\cdot\nabla \mathbf{U})_{k\ell}=\bbeta\cdot\nabla U_{k\ell}$.
\end{Remark}

The standard weak formulation of  \eqref{CDR} reads as follows: Find
$\mathbf{U}\in L^\infty((0,T);(H^1_0(\Omega))^{d\times d,\textrm{sym}})\cap\left( H^1((0,T);H^{-1}(\Omega))\right)^{d\times d,\textrm{sym}}$ such that, 
for almost all $t\in (0,T)$ the following holds
\begin{equation}
	\begin{split}
		\left\{ \begin{array}{ll}  
			(\partial_{t}\mathbf{U},\mathbf{V})_{\Omega} + a(\mathbf{U},\mathbf{V}) = (\mathbf{F},\mathbf{V})_{\Omega} \hspace{1cm} \forall \mathbf{V}\in 	\left(H^{1}_{0}(\Omega)\right)^{d\times d, \textrm{sym}},\\
			\hspace{1.6cm} \mathbf{U}(\cdot,0) = \mathbf{U}^{0}.
		\end{array} \right.\label{eq82}
	\end{split}
\end{equation}
Here, the bilinear form $a(\cdot,\cdot)$ is defined by
\begin{equation}
	a(\mathbf{W},\mathbf{V}):=\left(\mathcal{D}\nabla \mathbf{W},\nabla \mathbf{V}\right)_{\Omega}+(\bbeta\cdot\nabla \mathbf{W},\mathbf{V})_{\Omega}+\mu( \mathbf{W},\mathbf{V})_{\Omega}.\label{eq81}
\end{equation}
In the above definition we have slightly abused the notation, as the convective term $\bbeta$ might depend on $t$, but unless the context requires it, we will always denote
this bilinear form by $a(\cdot,\cdot)$.  Since we have supposed that $\bbeta$ is solenoidal, then the bilinear form $a(\cdot,\cdot)$ is elliptic, in the sense that for $\mathbf{V}\in (H^1_0(\Omega))^{d\times d}$ it holds that $(\bbeta\cdot\nabla\mathbf{V},\mathbf{V})_\Omega=0$ and hence $a(\mathbf{V},\mathbf{V})=(\mathcal{D}\nabla\mathbf{V},\nabla\mathbf{V})_\Omega+\mu(\mathbf{V},\mathbf{V})_\Omega$.
More precisely, for each $t\in (0,T)$ the 
bilinear form  $a(\cdot,\cdot)$ induces the following ``energy'' norm in $	\left(H^{1}_{0}(\Omega)\right)^{d\times d}$
\begin{equation*}
	\|\mathbf{V} \|_{a}=\left( a(\mathbf{V},\mathbf{V})\right)^{\frac{1}{2}},\qquad t\in [0,T].
\end{equation*}

It is a well-known fact that the problem \eqref{eq82} has a unique solution $\mathbf{U}$, as a consequence
of Lions' Theorem (see, e.g., \cite[Theorem~4.1]{lions2012non}).  Building on what was mentioned in the introduction, we will make the following assumption on  $\mathbf{U}$.

\noindent\underline{Assumption (A1):} We will suppose that the eigenvalues of the weak solution of \eqref{eq82} satisfy 
\begin{equation}
	\epsilon\leq \lambda_{k}(\bU(\bx,t))\leq \kappa, \hspace{0.5cm}k=1,\cdots,d,\hspace{0.5cm}\text{ for almost all}\ \  (\bx,t)\in \Omega\times(0,T),\label{eq14}
\end{equation}
where $\epsilon$ and $\kappa$ are known non-negative constants.

\subsection{Space discretisation}
Since in problem \eqref{eq82} the space and time variables play different roles, we first approximate the solution in \eqref{eq82} only in space, reducing it to a system of coupled ordinary differential equations where time is the only independent variable.

To discretise \eqref{eq82} with respect to space in tensor form, we consider a finite-dimensional subspace of \((H^{1}(\Omega))^{d\times d}\). Let \(\calP\) be a conforming and shape-regular partition of \(\Omega\) into simplices (or affine quadrilateral/hexahedra).  We denote by $\boldsymbol{x}_1^{},\ldots,\boldsymbol{x}_M^{}$ the interior nodes
of  \(\calP\).
Over \(\calP\), and for \(k \geq 1\), we define the tensor finite element space as
\begin{align}
	\mathbb{V}_\calP := \{\mathbf{V}_h \in (C^{0}(\overline{\Omega}))^{d\times d} : \mathbf{V}_h|_{K} \in (\mathfrak{R}(K))^{d\times d}, \ \forall K \in \calP \} \cap (H_{0}^{1}(\Omega))^{d\times d, \textrm{sym}}, \label{eq1_tensor}
\end{align}
where
\begin{align}
	\mathfrak{R}(K) = 
	\begin{cases} 
		\mathbb{P}_{k}(K), & \text{if } K \text{ is a simplex}, \\
		\mathbb{Q}_{k}(K), & \text{if } K \text{ is an affine quadrilateral/hexahedral},\label{simplexquadri}
	\end{cases}
\end{align}
with \(\mathbb{P}_{k}(K)\) denoting the polynomials of total degree \(k\) on \(K\) and \(\mathbb{Q}_{k}(K)\) denoting the mapped space of polynomials of degree at most \(k\) in each variable.

\begin{Remark}[Tensor-valued basis construction]
There are several ways in which the basis functions for this space can be built.  We now give some
more details on this process. 
Let $V_\calP$ denote the scalar finite element space defined as
\begin{align}
	V_\calP := \{v_h \in C^0(\overline{\Omega}) : v_h|_K \in \mathfrak{R}(K), \forall K \in \calP\} \cap H^1_0(\Omega),
\end{align}
with basis functions $\{\phi_i\}_{i=1}^M$.
Therefore, based on the definition of the finite element space $\mathbb{V}_{\calP}$, any function $\bV_{h} \in \mathbb{V}_{\calP} $ can be represented by the following expansion
\begin{align}
	\bV_{h}=\sum_{i=1}^{M} \sum_{1 \leq j \leq \ell \leq d}v_{j\ell}^i\Phi^{j\ell}_{i}, \label{lagrange}
\end{align}
where $\Phi^{j\ell}_{i}$ are the tensor-valued basis functions given by
\begin{align}
	\Phi^{j\ell}_{i}=
	\begin{cases}
		\phi_{i}\mathbf{e}_j \otimes \mathbf{e}_j, & \text{if } j = \ell, \\
		\phi_{i}(\mathbf{e}_j \otimes \mathbf{e}_{\ell} + \mathbf{e}_{\ell} \otimes \mathbf{e}_j), & \text{if } j < \ell.
	\end{cases} \label{eq:basisfun}
\end{align}
Here,   $\mathbf{e}_j$ is the $j$-th canonical basis vector in $\mathbb{R}^d$, $v_{j\ell}^i$ are the coefficients of the expansion, and $\otimes$ denotes the tensor product. The tensor-valued basis functions $\Phi^{j\ell}_{i}$ are constructed to preserve the symmetry of the elements of $\mathbb{V}_{\calP}$.
In particular, for $j<\ell$ the symmetric combination in \eqref{eq:basisfun} ensures that $\Phi^{j\ell}_{i}$ has equal $(j,\ell)$ and $(\ell,j)$ components.
Note that since the tensor space $\mathbb{V}_{\calP}$ consists of symmetric tensors, its dimension is $M \frac{d(d+1)}{2}$.
\end{Remark}

The natural extension of the Lagrange interpolation operator (see \cite[Chapter~11]{EG21-I} for its definition) to the tensor-valued context is the following mapping
 	\begin{align}
 		\mathbf{I}_{h}:\big(\mathcal{C}^{0}(\overline{\Omega})\big)^{d\times d}\cap \big( H^1_0(\Omega)\big)^{d\times d,\textrm{sym}}&\longrightarrow \mathbb{V}_{\mathcal{P}},\nonumber\\
 		\mathbf{V}&\longmapsto \mathbf{I}_{h}\mathbf{V}:= \sum_{i=1}^{M} \sum_{k,\ell=1}^{d} v_{k\ell}(\bx_i)\Phi^{k\ell}_{i} .\label{eq:componentwise_interpolation}
 	\end{align}
(Here the interpolation is defined componentwise by evaluating $\mathbf{V}$ at the nodal points $\bx_i$ and expanding in the tensor-valued basis from \eqref{eq:basisfun}.)
It satisfies the following approximation property (see \cite[Proposition~1.12]{EG21-I}): Let $1\leq \ell \leq k$. Then there exists $C>0$, 
independent of $h$, such that for all 
$\mathbf{V} \in (H^{\ell+1}(\Omega))^{d\times d}\cap (H^1_0(\Omega))^{d\times d}$,
\begin{equation}
	\|\mathbf{V}- \mathbf{I}_{h}\mathbf{V}\|_{0,K}
	+ h_{K} |\mathbf{V}-\mathbf{I}_{h}\mathbf{V}|_{1,K}
	\leq C h^{\ell +1}_{K}|\mathbf{V}|_{\ell +1,K}.
	\label{tensorlagrange}
\end{equation}

In addition, we recall some standard estimates for finite element functions, presented
here in the tensor-valued form. First, we recall the following inverse inequality
(see \cite[Lemma~12.1]{EG21-I}): for all $s,\ell\in \mathbb{N}_0$, $0\le s\leq \ell$ and all $p,q \in [1,\infty]$, 
there exists a constant $C>0$, independent of $h$, such that for all 
$\mathbf{V}_h \in \mathbb{V}_\calP$
\begin{equation}
	|\mathbf{V}_h|_{\ell,p,K} \leq 
	C h_K^{s-\ell+m\left(\frac{1}{p}-\frac{1}{q}\right)} |\mathbf{V}_h|_{s,q,K}.
	\label{inversetensor}
\end{equation}
(The exponent involves the spatial dimension $m$ of $\Omega\subset\mathbb{R}^m$, rather than the tensor dimension $d$.)
In addition, we recall the following discrete trace inequality (see \cite[Lemma~2.15]{EG21-I}): there exists $C>0$ independent of $h$ such that, for every 
$\mathbf{V} \in (H^1(K))^{d\times d}$,
\begin{equation}
	\|\mathbf{V}\|^{2}_{0,\partial K} 
	\leq C \left(h_K^{-1}\|\mathbf{V}\|^2_{0,K}+h_K |\mathbf{V}|^{2}_{1,K}\right).
	\label{Tensortrace}
\end{equation}

\subsection{The baseline discretisation}

As discussed in the introduction, the method is built over a baseline discretisation. For convection-dominated (or transport) problems, it is a well-established fact that the plain Galerkin method should not 
be used. So, in this work the baseline discretisation is a stabilised finite element method. In principle,
any stabilised method can be used, but to fix ideas in this work our method of choice
is Continuous Interior Penalty (CIP), originally proposed in \cite{MR2068903} and analysed in detail for the time-dependent problem in \cite{BF09}. The CIP 
method adds the following stabilising term to the Galerkin scheme:
\begin{equation}
	J(\bU_h^{},\bV_h^{})=\gamma \sum_{F\in\calF_I^{}}\int_F\|\bbeta \|_{0,\infty,F}^{} h_{F}^{2}\llbracket  \boldsymbol \nabla \bU_{h} \rrbracket :\llbracket \boldsymbol \nabla \bV_{h}  \rrbracket   \mathrm{d}s . \label{eq10}
\end{equation}
(Here $\calF_I$ denotes the set of interior faces of the mesh, $h_F$ is a characteristic diameter of $F$, and $\llbracket \nabla \mathbf{V}_h\rrbracket$ denotes the jump of the broken gradient across $F$ in the standard CIP sense.)
Thus, the stabilised method reads as follows: 
\begin{equation}
	\begin{split}
		\left\{ \begin{array}{ll}
			\text{For almost all $t\in  (0,T)$, find $\mathbf{U}_{h}\in \mathbb{V}_{\calP}$ such that} \vspace{.1cm}
			\\  
			(\partial_{t}\mathbf{U}_{h},\mathbf{V}_{h})_{\Omega} + a_{J}(\mathbf{U}_{h},\mathbf{V}_{h}) = (\mathbf{F},\mathbf{V}_{h})_{\Omega} \hspace{1cm} \forall \mathbf{V}_{h}\in \mathbb{V}_\calP,  \vspace{.1cm}\\
			\hspace{2.6cm} \mathbf{U}_{h}(\cdot,0) = \mathbf{I}_{h}\mathbf{U}^{0},
		\end{array} \right.\label{eq824}
	\end{split}
\end{equation}
where
\begin{equation}
	a_{J}(\mathbf{U}_{h},\mathbf{V}_{h}):=a(\mathbf{U}_{h},\mathbf{V}_{h})+	J(\mathbf{U}_{h},\mathbf{V}_{h}) .\label{eq12}
\end{equation}

The analysis of this method relies on the fact that the bilinear form $a_{J}(\cdot,\cdot)$ is elliptic. In fact,
it satisfies the following: for all $\bV_h\in\mathbb{V}_\calP^{}$ we have
\begin{equation}\label{aJ-ellipt}
a_{J}(\mathbf{V}_{h},\mathbf{V}_{h})=\|\bV_h^{}\|_{a}^2+J(\bV_h^{},\bV_h^{})=: \|\bV_h^{}\|_{a_J}^2 .
\end{equation}

\subsection{The admissible set}

We introduce the following \textit{admissible set}, that is, the set of finite element
functions such that, at each degree of freedom,  they belong to  $\mathbb{S}_{d}^{\epsilon,\kappa}$. That
is,
\begin{align}
	\mathbb{V}_{\calP}^{\epsilon,\kappa}:=\{\mathbf{V}_{h}\in \mathbb{V}_{\mathcal{P}}: \mathbf{V}_{h}(\boldsymbol{x}_{i})\in \mathbb{S}_{d}^{\epsilon,\kappa},  \text{for all}\ \ i=1,\ldots,M  \}.\label{eq16}
\end{align} 

\begin{Remark}[Convexity of the admissible set]
One very important property of this set is that it is convex. In fact, given two elements
$\mathbf{U}_{h},\mathbf{V}_{h}\in\mathbb{V}_{\mathcal{P}}^{\epsilon,\kappa}$ and
$t\in [0,1]$ we see that, for every $\mathbf{x}\in\mathbb{R}^d$ we have
\begin{equation*}
\mathbf{x}^T\big( t\mathbf{U}_{h}+(1-t)\mathbf{V}_{h}\big)\mathbf{x}
= t \mathbf{x}^T\mathbf{U}_{h}\mathbf{x} + (1-t)  \mathbf{x}^T\mathbf{V}_{h}\mathbf{x}
\ge t\epsilon \mathbf{x}^T\mathbf{x}+(1-t)\epsilon \mathbf{x}^T\mathbf{x}
= \epsilon \mathbf{x}^T\mathbf{x} ,
\end{equation*}
and thus the minimum eigenvalue of $t\mathbf{U}_{h}+(1-t)\mathbf{V}_{h}$ is larger than,  or equal to,
$\epsilon$.  In a similar fashion we can prove that the largest eigenvalue of $t\mathbf{U}_{h}+(1-t)\mathbf{V}_{h}$ is smaller than, or equal to $\kappa$.\hfill$\Box$
\end{Remark}

We finish this section by presenting an algebraic projection onto $\mathbb{V}_\calP^{\epsilon,\kappa}$
that will be used in the iterative scheme employed in our numerical experiments.
Using the definitions \eqref{Vplus} and \eqref{Vminus} at each nodal value, we split $\mathbf{V}_{h}(\bx_{i})$ as 
$\mathbf{V}_{h}(\bx_{i})=\mathbf{V}_h(\bx_{i})^{+}+\mathbf{V}_h(\bx_{i})^{-}$, and thus we can define $\mathbf{V}_{h}^{+}$ and $\mathbf{V}_{h}^{-}$ as
\begin{align}
\mathbf{V}_{h}^{+}=\sum_{i=1}^{M} \sum_{1 \leq k \leq \ell \leq d} v_{k\ell}^{+}(\bx_i) 	\Phi^{k\ell}_{i} ,\quad \label{eq17}
\end{align}
where $v_{k\ell}^{+}(\bx_i)$ denotes the $(k,\ell)$-entry of the constrained nodal tensor $\mathbf{V}_h(\bx_i)^{+}$, and $\Phi^{k\ell}_{i}$ is the basis function defined in \eqref{eq:basisfun}, and 
\begin{align}
	\mathbf{V}_h^{-}=\mathbf{V}_h-\mathbf{V}_{h}^{+}. \label{eq18}
\end{align}
We refer to $\mathbf{V}_{h}^{+}$ and $\mathbf{V}_{h}^{-}$ as the \textit{constrained}
and \textit{complementary} parts of $\mathbf{V}_{h}$, respectively. Using this
decomposition we define the following algebraic projection
\begin{equation}
	(\cdot)^{+}:\mathbb{V}_{\mathcal{P}}\rightarrow \mathbb{V}_{\calP}^{\epsilon,\kappa}\quad,\quad
	\mathbf{V}_{h}\longmapsto \mathbf{V}_{h}^{+} . \label{posoperator}
\end{equation} 
	
\section{The finite element method}\label{Sec:FEM}

Let $N>0$ be a given positive integer. In what follows, we consider a partition of the time interval $[0,T]$ as $t_{0}=0<t_{1}<t_{2}<\cdots<t_{N}=T$ with the time step size $\Delta t_{n}:=t_{n}-t_{n-1}$. To simplify the notation we assume that the time step size is uniform i.e., $\Delta t_{n}=\Delta t=\frac{T}{N}$. In addition, the discrete value $\bU_{h}^{n}\in \mathbb{V}_\calP$ stands for the approximation of $\bU^{n}=\mathbf{U}(t_{n})$ in $\mathbb{V}_\calP$ for $0\leq n \leq N$.   The discretisation of the time derivative is defined as follows:
\begin{equation*}
 \delta \mathbf{U}_{h}^{n} :=\frac{\bU_{h}^{n}-\mathbf{U}_{h}^{n-1}}{\Delta t} .
 \end{equation*}

With these notations, the finite element method used in this work reads as follows:
\begin{equation}
	\left\{ \begin{array}{ll}
		\text{For $1\leq n \leq N$, find $\mathbf{U}_h^n \in \mathbb{V}_{\calP}^{\epsilon,\kappa}$ such that}\vspace{.1cm}
		\\  
		(\delta \mathbf{U}_{h}^{n},\mathbf{V}_{h}-\mathbf{U}_{h}^{n})_{\Omega} + a_{J}(\mathbf{U}_{h}^{n},\mathbf{V}_{h}-\mathbf{U}_{h}^{n}) \geq (\mathbf{F}^{n},\mathbf{V}_{h}-\mathbf{U}_{h}^{n})_{\Omega} \hspace{1cm} \forall \mathbf{V}_{h}\in \mathbb{V}_{\calP}^{\epsilon,\kappa}, \vspace{.1cm}\\
		\hspace{4.8cm} \mathbf{U}_{h}^{0} = \mathbf{I}_{h}\mathbf{U}^{0} .
	\end{array} \right.\label{eq199}
\end{equation}

The proof of the well-posedness of \eqref{eq199} can be done using Stampacchia's Theorem (\cite[Theorem~2.1]{Stampacchia}).  In fact, it is not difficult to realise that, at each time step $n$, \eqref{eq199} can be rewritten as: Find $\mathbf{U}_h^n \in \mathbb{V}_{\calP}^{\epsilon,\kappa}$, such that
\begin{align}
	\mathcal{B}(\mathbf{U}_h^n, \mathbf{V}_h - \mathbf{U}_h^n) \geq \mathcal{L}(\mathbf{V}_h - \mathbf{U}_h^n) \quad \forall \mathbf{V}_{h}\in \mathbb{V}_{\calP}^{\epsilon,\kappa},\label{variational}
\end{align}
where
\begin{equation}
	\mathcal{B}(\mathbf{W}_h, \mathbf{V}_h) := \frac{1}{\Delta t}(\mathbf{W}_h, \mathbf{V}_h)_{\Omega} +  a_J(\mathbf{W}_h, \mathbf{V}_h),\hspace{0.5cm} \forall \bV_{h}, \bW_{h}\in \mathbb{V}_{\calP}^{}, \label{Bilinear1}
\end{equation}
and 
\begin{equation}
	\mathcal{L}(\mathbf{V}_h) := \frac{1}{\Delta t}(\mathbf{U}_h^{n-1}, \mathbf{V}_h)_{\Omega} + (\mathbf{F}^{n}, \mathbf{V}_h)_{\Omega}.\label{linear1} 
\end{equation}

\begin{theorem}[Well-posedness of the fully discrete variational inequality]
	Let $\mathbf{V}_h \in \mathbb{V}_{\calP}^{\epsilon,\kappa}$.  
	Then the bilinear form $\mathcal{B}(\cdot,\cdot)$ defined in \eqref{Bilinear1} 
	is continuous and elliptic on $\mathbb{V}_{\calP}$.  
	As a consequence, the variational inequality \eqref{variational} admits a unique solution 
	$\mathbf{U}_h^n \in \mathbb{V}_{\calP}^{\epsilon,\kappa}$.
\end{theorem}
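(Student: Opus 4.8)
The plan is to verify the hypotheses of Stampacchia's theorem on the finite-dimensional Hilbert space $\mathbb{V}_\calP$ equipped with the $(H^1_0(\Omega))^{d\times d}$ norm: namely that $\mathbb{V}_\calP^{\epsilon,\kappa}$ is a nonempty, closed, convex set, that $\mathcal{B}(\cdot,\cdot)$ is continuous and elliptic, and that $\mathcal{L}(\cdot)$ is a continuous linear functional. Convexity of the admissible set was already established in the Remark following \eqref{eq16}, so it remains to check that it is nonempty and closed. For nonemptiness, since the nodal constraint in \eqref{eq16} is imposed only at the interior nodes $\bx_1,\dots,\bx_M$, the finite element function taking the value $\tfrac{\epsilon+\kappa}{2}\mathbf{I}\in\mathbb{S}_d^{\epsilon,\kappa}$ at every interior node and $\mathbf{0}$ at the boundary nodes lies in $\mathbb{V}_\calP^{\epsilon,\kappa}$. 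For closedness, I would note that the nodal-evaluation map $\mathbf{V}_h\mapsto(\mathbf{V}_h(\bx_1),\dots,\mathbf{V}_h(\bx_M))$ is continuous and that $\mathbb{S}_d^{\epsilon,\kappa}$ is closed in $\mathbb{S}_d$; this follows from the continuous dependence of the eigenvalues $\lambda_k(\cdot)$ on the tensor entries, so that \eqref{eq161} is an intersection of preimages of closed intervals. Hence $\mathbb{V}_\calP^{\epsilon,\kappa}$ is the preimage of a closed set under a continuous map and is therefore closed.

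For the ellipticity of $\mathcal{B}$, I would invoke \eqref{aJ-ellipt} to write $a_J(\mathbf{V}_h,\mathbf{V}_h)=\|\mathbf{V}_h\|_{a_J}^2\ge\|\mathbf{V}_h\|_a^2$, using the nonnegativity of the penalty term $J(\cdot,\cdot)$, and then use the uniform positive definiteness of $\mathcal{D}$ together with $\mu\ge 0$ to bound $\|\mathbf{V}_h\|_a^2\ge\alpha\,\|\nabla\mathbf{V}_h\|_{0,\Omega}^2$ for some $\alpha>0$. Combined with the first term of \eqref{Bilinear1} this gives $\mathcal{B}(\mathbf{V}_h,\mathbf{V}_h)\ge\frac{1}{\Delta t}\|\mathbf{V}_h\|_{0,\Omega}^2+\alpha\,\|\nabla\mathbf{V}_h\|_{0,\Omega}^2\ge C\,\|\mathbf{V}_h\|_{1,\Omega}^2$, which is the required ellipticity. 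The crucial cancellation of the non-symmetric convection term that makes this possible is exactly the solenoidal identity recorded below \eqref{eq81}.

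Continuity of $\mathcal{B}$ and of $\mathcal{L}$ then follows by bounding each term with the Cauchy--Schwarz inequality: the terms in $a(\cdot,\cdot)$ are controlled using $\|\mathcal{D}\|_{0,\infty,\Omega}$, $\|\bbeta\|_{0,\infty,\Omega}$ and $\mu$, while the face-jump contributions in $J(\cdot,\cdot)$ are handled via the discrete trace and inverse inequalities \eqref{Tensortrace} and \eqref{inversetensor}. On the fixed finite-dimensional space these bounds hold with an ($h$-dependent) constant, which suffices for the abstract theory. With all the ingredients in place, existence and uniqueness of $\mathbf{U}_h^n\in\mathbb{V}_\calP^{\epsilon,\kappa}$ solving \eqref{variational} follows directly from \cite[Theorem~2.1]{Stampacchia}.

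The one genuinely delicate point I would flag is that $\mathcal{B}$ is non-symmetric, owing to the convection contribution, so one cannot reduce the problem to a constrained energy minimisation; the version of Stampacchia's theorem valid for non-symmetric coercive forms (whose contraction/projection step relies precisely on the ellipticity established above) is what is required. Beyond this, the main obstacle is the verification that $\mathbb{V}_\calP^{\epsilon,\kappa}$ is nonempty and closed, in particular confirming that the $H^1_0$ boundary condition is compatible with the eigenvalue constraint; this causes no difficulty because \eqref{eq16} constrains only the interior nodes.
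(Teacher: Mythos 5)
Your proposal is correct and follows essentially the same route as the paper: establish ellipticity of $\mathcal{B}(\cdot,\cdot)$ via the identity $\mathcal{B}(\mathbf{V}_h,\mathbf{V}_h)=\frac{1}{\Delta t}\|\mathbf{V}_h\|_{0,\Omega}^2+\|\mathbf{V}_h\|_{a_J}^2$, note that $\mathbb{V}_{\calP}^{\epsilon,\kappa}$ is a nonempty closed convex subset of $\mathbb{V}_{\calP}$, and invoke Stampacchia's theorem. You simply fill in details the paper leaves implicit (nonemptiness, closedness via continuity of eigenvalues, continuity of the forms, and the need for the non-symmetric version of the theorem), all of which are accurate.
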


\begin{proof}
	Let $\mathbf{V}_h\in\mathbb{V}_\calP^{}$. Then, it follows directly from the definition
	of $\mathcal{B}$ that
	\[
	\mathcal{B}(\mathbf{V}_h,\mathbf{V}_h)=\frac{1}{\Delta t}  \|\mathbf{V}_h\|_{0,\Omega}^2
	+\|\mathbf{V}_h\|^2_{a_J} ,
	\] 
	which shows the ellipticity of $\mathcal{B}$.  In addition,  $\mathbb{V}_{\calP}^{\epsilon,\kappa}$ is a closed convex subset of $\mathbb{V}_{\calP}$. Therefore, Stampacchia’s Theorem 
	yields the existence and uniqueness of solutions to \eqref{variational}.
\end{proof}

\section{Stability and Error analysis}\label{Sec:Error}

This section is devoted to establishing a stability result and deriving optimal error estimates for the method~\eqref{eq199}. A fundamental tool employed throughout the analysis is the discrete Gr\"onwall lemma, originally proved in \cite[Lemma~5.1]{heywood1990finite}.

\begin{lemma}[Discrete Gr\"onwall lemma]\label{13}
	Let $k$, $B$, $a_{n}$, $b_{n}$, $c_{n}$, $\gamma_{n}$, $n=0,\ldots,\nu$, be non-negative numbers such that
	\begin{align*}
		a_{\nu}+k\sum_{n=0}^{\nu}b_{n}\leq k\sum_{n=0}^{\nu}\gamma_{n}a_{n}+k\sum_{n=0}^{\nu}c_{n}+B
		\hspace{0.5cm}{\rm for}\hspace{0.3cm}\nu\geq 0.
	\end{align*}
	Suppose $k\gamma_{n}\leq 1$ for every $n$, and set $\sigma_{n}=(1-k\gamma_{n})^{-1}$. Then
	\begin{align}
		a_{\nu}+k\sum_{n=0}^{\nu}b_{n}\leq \exp\left(k\sum_{n=0}^{\nu}\sigma_{n}\gamma_{n}\right)\left(k\sum_{n=0}^{\nu}c_{n}+B\right).\label{inequality11}
	\end{align}
\end{lemma}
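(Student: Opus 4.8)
The plan is to reduce the coupled inequality to a scalar first-order recursion and then solve that recursion explicitly. Write $S_\nu := k\sum_{n=0}^\nu b_n$, $G_\nu := k\sum_{n=0}^\nu c_n + B$, and let $P_\nu := k\sum_{n=0}^\nu \gamma_n a_n + G_\nu$ denote the entire right-hand side of the hypothesis at level $\nu$. The hypothesis then reads $a_\nu + S_\nu \le P_\nu$; in particular $a_\nu \le P_\nu$ since $S_\nu \ge 0$, and $G_\nu$ is non-decreasing in $\nu$. I would interpret the standing condition as the strict inequality $k\gamma_n < 1$ (as in the original reference \cite{heywood1990finite}), which is what is actually needed for each $\sigma_n = (1-k\gamma_n)^{-1}$ to be finite; note also $\sigma_n \ge 1$.

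First I would establish a recursion for $P_\nu$. Computing the increment gives $P_\nu - P_{\nu-1} = k\gamma_\nu a_\nu + kc_\nu$, and substituting $a_\nu \le P_\nu$ yields $P_\nu - P_{\nu-1} \le k\gamma_\nu P_\nu + kc_\nu$. Absorbing the diagonal term to the left and multiplying by $\sigma_\nu$ produces the clean scalar recursion
\[
P_\nu \le \sigma_\nu P_{\nu-1} + \sigma_\nu k c_\nu, \qquad \nu \ge 1,
\]
with base case $P_0 \le \sigma_0 G_0$, obtained identically from $a_0 \le P_0$. This absorption step, which is the whole reason for introducing $\sigma_\nu$ and for requiring $k\gamma_\nu < 1$, is where the self-referential appearance of $a_\nu$ on both sides of the hypothesis is resolved.

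Next I would unroll the recursion. Writing $\Pi_{j,\nu} := \prod_{n=j}^\nu \sigma_n$, with the empty product equal to $1$, a short induction on $\nu$ gives
\[
P_\nu \le \Pi_{0,\nu}\, G_0 + \sum_{j=1}^\nu \Pi_{j,\nu}\, k c_j .
\]
Since every $\sigma_n \ge 1$, each partial product is bounded by the full one, $\Pi_{j,\nu} \le \Pi_{0,\nu}$, so factoring it out and using $G_0 + k\sum_{j=1}^\nu c_j = G_\nu$ gives $P_\nu \le \Pi_{0,\nu}\, G_\nu$.

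The remaining and most delicate point is to replace the product $\Pi_{0,\nu}$ by the advertised exponential. The key elementary inequality is $\sigma_n \le \exp(k\sigma_n\gamma_n)$ for each $n$; equivalently, writing $x = k\gamma_n \in [0,1)$, this is $-\ln(1-x) \le x/(1-x)$, which follows from the standard bound $\ln t \le t-1$ applied at $t = (1-x)^{-1}$. Multiplying over $n=0,\dots,\nu$ gives $\Pi_{0,\nu} \le \exp\big(k\sum_{n=0}^\nu \sigma_n\gamma_n\big)$, whence $a_\nu + S_\nu \le P_\nu \le \exp\big(k\sum_{n=0}^\nu \sigma_n\gamma_n\big)\,G_\nu$, which is exactly \eqref{inequality11}. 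I expect the recursion and its unrolling to be routine once set up; the two places demanding genuine care are the absorption of $k\gamma_\nu a_\nu$ (needing $k\gamma_\nu<1$) and the passage from the product of the $\sigma_n$ to the exponential via $\ln t \le t-1$.
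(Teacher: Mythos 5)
Your proof is correct. Note that the paper itself does not prove this lemma at all---it is quoted verbatim from \cite[Lemma~5.1]{heywood1990finite}---and your argument is essentially the standard one from that reference: absorb the diagonal term $k\gamma_\nu a_\nu$ into the left-hand side using $\sigma_\nu=(1-k\gamma_\nu)^{-1}$, unroll the resulting scalar recursion for the right-hand side $P_\nu$, and pass from the product $\prod_{n=0}^{\nu}\sigma_n$ to the exponential via $\sigma_n\le\exp(k\sigma_n\gamma_n)$, i.e.\ $\ln t\le t-1$ at $t=(1-k\gamma_n)^{-1}$. Your reading of the hypothesis as the strict inequality $k\gamma_n<1$ is also the right one, since $\sigma_n$ is undefined when $k\gamma_n=1$; the $k\gamma_n\le 1$ printed in the statement is an inaccuracy inherited in the quotation, and the original reference indeed assumes strictness.
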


We now prove stability for the fully implicit time discretisation,  
which is precisely the scheme~\eqref{eq199}.  

\begin{lemma}[Energy stability]
	Let $\bU^{n}_{h}\in  \mathbb{V}_\calP$, for $n=1,\ldots, N$ solve \eqref{eq199} for $N\ge 2$ (equivalently, $\Delta t<T$). Then the following stability estimate holds true:
	\begin{equation}
	\begin{aligned}\label{Stabilityestimate}
		\| \mathbf{U}_{h}^{N} \|_{0,\Omega}^{2}
		&+ 2\Big(\sum_{n=1}^{N}\| \mathbf{U}_{h}^{n} - \mathbf{U}_{h}^{n-1} \|_{0,\Omega}^{2}
		+  2 \Delta t \sum_{n=1}^{N}a_{J}(\mathbf{U}_{h}^{n}, \mathbf{U}_{h}^{n}) \Big)
		\\
		&\leq
		\exp\!\Big(\frac{N}{N-1}\Big)\Big(
		4\Delta t\sum_{n=1}^{N}\Big(
		T(\epsilon\mu \sqrt{d}\lvert \Omega \rvert^{\frac{1}{2}}+\|\mathbf{F}^{n}\|_{0,\Omega})^{2}
		+\epsilon \sqrt{d} \lvert \Omega \rvert^{\frac{1}{2}}\|\mathbf{F}^{n}\|_{0,\Omega}\Big)
		\\
		&\hspace{2.35cm}
		+4d \epsilon^{2}\lvert \Omega \rvert
		+2\parallel  \mathbf{U}_{h}^{0}\parallel_{0,\Omega}^{2}
		-4\epsilon\int_\Omega{\rm tr}(\mathbf{U}_{h}^{0}){\rm d}\bx\Big).
	\end{aligned}
	\end{equation}
\end{lemma}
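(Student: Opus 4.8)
The plan is to test the variational inequality \eqref{eq199} against the constant ``barrier'' state $\epsilon\mathbf{I}$, which realises the lowest admissible eigenvalue at every point, and to organise the estimate around the shifted quantity $\bW_h^n:=\bU_h^n-\epsilon\mathbf{I}$. The reason $\epsilon\mathbf{I}$ is the correct comparison is that it is spatially constant, so its gradient and its gradient jumps vanish: the diffusion contribution $(\mathcal{D}\nabla\bU_h^n,\nabla(\epsilon\mathbf{I}))_\Omega$ and the CIP contribution $J(\bU_h^n,\epsilon\mathbf{I})$ are zero, and, after integration by parts using $\mathrm{div}\,\bbeta=0$ together with $\bU_h^n=0$ on $\partial\Omega$, the convection contribution $(\bbeta\cdot\nabla\bU_h^n,\epsilon\mathbf{I})_\Omega=\epsilon\int_\Omega\bbeta\cdot\nabla(\mathrm{tr}\,\bU_h^n)\,\mathrm{d}\bx$ vanishes as well. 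The only surviving contribution of $\epsilon\mathbf{I}$ to $a_J(\bU_h^n,\epsilon\mathbf{I})$ is the reaction cross-term $\mu(\bU_h^n,\epsilon\mathbf{I})_\Omega=\mu\epsilon\int_\Omega\mathrm{tr}(\bU_h^n)\,\mathrm{d}\bx$. I would flag here the one delicate point: $\epsilon\mathbf{I}$ does not vanish on $\partial\Omega$, so it is not literally an element of $\mathbb{V}_\calP^{\epsilon,\kappa}$, and its use as a comparison state has to be justified; the exact vanishing of the three transport contributions above is precisely what makes this legitimate.

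Taking $\bV_h=\epsilon\mathbf{I}$, I would expand the pairings and rearrange to isolate the ``diagonal'' energy at level $n$. The discrete time-derivative term is handled by the polarisation identity $(\delta\bU_h^n,\bU_h^n)_\Omega=\tfrac{1}{2\Delta t}\big(\|\bU_h^n\|_{0,\Omega}^2-\|\bU_h^{n-1}\|_{0,\Omega}^2+\|\bU_h^n-\bU_h^{n-1}\|_{0,\Omega}^2\big)$, and the ellipticity $a_J(\bU_h^n,\bU_h^n)=\|\bU_h^n\|_{a_J}^2\ge 0$ from \eqref{aJ-ellipt} keeps the $a_J$-term on the left. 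The trace cross-terms coming from both the mass term $(\delta\bU_h^n,\epsilon\mathbf{I})_\Omega$ and the reaction term are then regrouped with $\|\bU_h^n\|_{0,\Omega}^2$ to complete the square $\|\bU_h^n-\epsilon\mathbf{I}\|_{0,\Omega}^2=\|\bW_h^n\|_{0,\Omega}^2$; this is what generates the constant $\|\epsilon\mathbf{I}\|_{0,\Omega}^2=d\epsilon^2|\Omega|$ and the trace integrals $\int_\Omega\mathrm{tr}(\bU_h^n)\,\mathrm{d}\bx$. The time-difference of the latter telescopes upon summation, its final-level endpoint being absorbed by the same completion of the square and its initial value producing the term $-4\epsilon\int_\Omega\mathrm{tr}(\bU_h^0)\,\mathrm{d}\bx$ (together with part of $\|\bU_h^0\|_{0,\Omega}^2$) in the stated bound.

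For the right-hand side I would combine the forcing with the reaction cross-term into pairings of the type $(\mathbf{F}^n-\mu\epsilon\mathbf{I},\,\cdot\,)_\Omega$: Cauchy--Schwarz and the triangle inequality $\|\mathbf{F}^n-\mu\epsilon\mathbf{I}\|_{0,\Omega}\le\|\mathbf{F}^n\|_{0,\Omega}+\mu\epsilon\sqrt{d}\,|\Omega|^{1/2}$ yield the contribution $T(\epsilon\mu\sqrt{d}\,|\Omega|^{1/2}+\|\mathbf{F}^n\|_{0,\Omega})^2$, while the separate pairing $(\mathbf{F}^n,\epsilon\mathbf{I})_\Omega$ produces the term $\epsilon\sqrt{d}\,|\Omega|^{1/2}\|\mathbf{F}^n\|_{0,\Omega}$. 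The crucial calibration is the Young step: one splits $2ab\le \tfrac1T a^2+T b^2$ so that the quadratic term absorbed into the discrete energy is exactly $\tfrac{\Delta t}{T}\|\,\cdot\,\|_{0,\Omega}^2$, which is what will fix the Gr\"onwall coefficient at $\gamma_n=1/T$ rather than a larger value.

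Finally, I would sum over $n=1,\dots,\nu$ and apply the discrete Gr\"onwall Lemma~\ref{13} with $k=\Delta t$, $\gamma_n=1/T$, the sequence $a_n=\|\bW_h^n\|_{0,\Omega}^2$, $b_n$ collecting the increment and $a_J$ terms, and $c_n$ the forcing. Since $N\ge 2$ gives $k\gamma_n=\Delta t/T=1/N<1$, the hypothesis $k\gamma_n\le 1$ holds and $\sigma_n=(1-1/N)^{-1}=N/(N-1)$; hence $k\sum_{n=1}^{N}\sigma_n\gamma_n=\Delta t\cdot N\cdot\tfrac{N}{N-1}\cdot\tfrac1T=\tfrac{N}{N-1}$, which yields the factor $\exp\!\big(N/(N-1)\big)$, and a final conversion from $\bW_h^N$ back to $\bU_h^N$ (again via the identity $\|\bU_h^N\|_{0,\Omega}^2=\|\bW_h^N\|_{0,\Omega}^2+2\epsilon\int_\Omega\mathrm{tr}(\bU_h^N)-d\epsilon^2|\Omega|$) gives the stated left-hand side. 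I expect the main obstacle to be twofold: the non-admissibility of $\epsilon\mathbf{I}$ on the boundary, whose use must be justified through the exact cancellation of the transport terms described above, and the careful bookkeeping of all $\epsilon$-dependent lower-order constants, where every Cauchy--Schwarz/Young split must be calibrated so that the term fed to Gr\"onwall is precisely $\gamma_n=1/T$ --- this is what makes the exponential constant come out as the sharp $N/(N-1)$ and reproduces the constants $4d\epsilon^2|\Omega|$ and $-4\epsilon\int_\Omega\mathrm{tr}(\bU_h^0)\,\mathrm{d}\bx$ exactly.
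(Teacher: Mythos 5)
Your proposal is correct and follows essentially the same route as the paper's proof: the same test function $\epsilon\mathbf{I}$, the same vanishing of the diffusion, convection and CIP contributions (via $\nabla\mathbf{I}=\mathbf{0}$, $\mathrm{div}\,\bbeta=0$ and the homogeneous boundary values of $\bU_h^n$), the same polarisation identity for the time increment, the same Young calibration producing the coefficient $\gamma_n=1/T$, and the same discrete Gr\"onwall application yielding $\sigma_n=N/(N-1)$ and the factor $\exp(N/(N-1))$ --- your reorganisation around the shifted variable $\bW_h^n=\bU_h^n-\epsilon\mathbf{I}$ is just the paper's completing-the-square bookkeeping performed at a different stage. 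One remark: the admissibility issue you flag (that $\epsilon\mathbf{I}$ does not vanish on $\partial\Omega$ when $\epsilon>0$, hence is not in $\mathbb{V}_\calP^{\epsilon,\kappa}$) is a genuine subtlety that the paper silently shares, but be aware that the cancellation of the transport terms does not by itself repair set membership; the honest resolution is that the homogeneous Dirichlet condition together with Assumption (A1) effectively forces $\epsilon=0$, in which case $\epsilon\mathbf{I}=\mathbf{0}$ is admissible and both proofs go through verbatim.
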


\begin{proof}
	We use the test function $\bV_h^{}=\epsilon \mathbf{I} \in \mathbb{V}_{\calP}^{\epsilon,\kappa}$ in \eqref{eq199} and get
	\begin{equation}\label{Sta-first}
	(\delta (\mathbf{U}_{h}^{n}),\epsilon \mathbf{I}-\mathbf{U}_{h}^{n})_{\Omega}
	+ a_{J}(\mathbf{U}_{h}^{n},\epsilon\mathbf{I}-\mathbf{U}_{h}^{n})
	\geq (\mathbf{F}^{n},\epsilon\mathbf{I}-\mathbf{U}_{h}^{n})_{\Omega}.
	\end{equation}
	For the first term we use the identity $(a-b)a=(a^2-b^2+(a-b)^2)/2$ and obtain
	\begin{equation}\label{Sta-second}
	(\mathbf{U}_{h}^{n}-\mathbf{U}_{h}^{n-1}, \epsilon \mathbf{I}-\mathbf{U}_{h}^{n})_{\Omega}
	= (\mathbf{U}_{h}^{n}-\mathbf{U}_{h}^{n-1}, \epsilon \mathbf{I})_\Omega
	-\frac{1}{2} \left(  \| \mathbf{U}_{h}^{n} \|_{0,\Omega}^{2}-\| \mathbf{U}_{h}^{n-1} \|_{0,\Omega}^{2}
	+ \| \mathbf{U}_{h}^{n} - \mathbf{U}_{h}^{n-1} \|_{0,\Omega}^{2} \right).
	\end {equation}
	To treat the second term in \eqref{Sta-first} we note that $\nabla \mathbf{I}=\mathbf{0}$ and, since ${\rm div} \bbeta=0$ and $\bU_h^n\in (H_0^1(\Omega))^{d\times d,\mathrm{sym}}$,
	the convection contribution satisfies $(\bbeta\cdot\nabla\bU_h^n,\mathbf{I})_\Omega=0$ by integration by parts.
	Therefore
	\begin{equation}\label{Sta-third}
	\epsilon a_{J}(\mathbf{U}_{h}^{n}, \mathbf{I})
	=\epsilon\left(\underbrace{\left(\mathcal{D}\nabla \bU_{h}^{n},\nabla \mathbf{I}\right)_{\Omega}}_{= 0}
	+\underbrace{\left(\bbeta\cdot\nabla\bU_h^n,\mathbf{I}\right)_{\Omega}}_{= 0}
	+\mu\left( \mathbf{U}_{h}^{n},\mathbf{I}\right)_{\Omega}
	+\underbrace{J(\mathbf{U}_{h}^{n},\mathbf{I})}_{= 0}\right)
	=\epsilon\mu( \mathbf{U}_{h}^{n},\mathbf{I})_{\Omega}.
	\end{equation}

	Inserting \eqref{Sta-second} and \eqref{Sta-third} in \eqref{Sta-first} leads to
	\begin{align*}
	\frac{1}{2 \Delta t} \Big(\| \mathbf{U}_{h}^{n} \|_{0,\Omega}^{2}
	&- \| \mathbf{U}_{h}^{n-1} \|_{0,\Omega}^{2}
	+ \| \mathbf{U}_{h}^{n} - \mathbf{U}_{h}^{n-1} \|_{0,\Omega}^{2}\Big)
	+\frac{\epsilon}{\Delta t} (\mathbf{U}_{h}^{n-1} - \mathbf{U}_{h}^{n}, \mathbf{I})_{\Omega}
	+ a_{J}(\mathbf{U}_{h}^{n}, \mathbf{U}_{h}^{n})
	\\
	&\leq (\mathbf{F}^{n}, \mathbf{U}_{h}^{n})_{\Omega}
	-\epsilon \left( (\mathbf{F}^{n}, \mathbf{I})_{\Omega}-\mu( \mathbf{U}_{h}^{n},\mathbf{I})_{\Omega} \right).
	\end{align*}
	Adding from $n=1$ to $n=N$ and using the Cauchy--Schwarz inequality, together with
	\[
	\|\mathbf{I}\|_{0,\Omega}= \sqrt{d} |\Omega|^{1/2},
	\]
	gives
	\begin{align*}
	\| \mathbf{U}_{h}^{N} \|_{0,\Omega}^{2}
	&- \| \mathbf{U}_{h}^{0} \|_{0,\Omega}^{2}
	+ \sum_{n=1}^{N}\| \mathbf{U}_{h}^{n} - \mathbf{U}_{h}^{n-1} \|_{0,\Omega}^{2}
	+  2 \Delta t\sum_{n=1}^{N} a_{J}(\mathbf{U}_{h}^{n}, \mathbf{U}_{h}^{n})
	\\
	&\leq 2 \Delta t \sum_{n=1}^{N}\Big(
	(\epsilon\mu \sqrt{d} \lvert \Omega \rvert^{\frac{1}{2}}+\|\mathbf{F}^{n}\|_{0,\Omega}) \| \mathbf{U}_{h}^{n}\|_{0,\Omega}
	+\epsilon \sqrt{d}\lvert \Omega \rvert^{\frac{1}{2}}\|\mathbf{F}^{n}\|_{0,\Omega}\Big)
	\\
	&\quad
	+2\epsilon \sqrt{d}\lvert \Omega \rvert^{\frac{1}{2}}  \| \mathbf{U}_{h}^{N}\|_{0,\Omega}
	-2\epsilon(\mathbf{U}_{h}^{0} , \mathbf{I})_{\Omega}.
	\end{align*}

        Now, for each $n$, we use
	\[
	2ab \le \frac{1}{2T}a^2 + 2T b^2,
	\]
	with $a=\|\mathbf{U}_h^n\|_{0,\Omega}$ and $b=\epsilon\mu \|\mathbf{I}\|_{0,\Omega}+\|\mathbf{F}^n\|_{0,\Omega}$, and we also use
	\[
	2ab \le \frac12 a^2 + 2 b^2,
	\]
	with $a=\|\mathbf{U}_h^N\|_{0,\Omega}$ and $b=\epsilon\|\mathbf{I}\|_{0,\Omega}$.
	After rearranging terms and multiplying by $2$ we obtain
	\begin{align*}
	\| \mathbf{U}_{h}^{N} \|_{0,\Omega}^{2}
	&+ 2\sum_{n=1}^{N}\| \mathbf{U}_{h}^{n} - \mathbf{U}_{h}^{n-1} \|_{0,\Omega}^{2}
	+ 4 \Delta t\sum_{n=1}^{N} a_{J}(\mathbf{U}_{h}^{n}, \mathbf{U}_{h}^{n})
	\\
	&\leq \frac{\Delta t}{T}\sum_{n=1}^{N}\| \mathbf{U}_{h}^{n}\|_{0,\Omega}^{2}
	+4\Delta t\sum_{n=1}^{N}\Big(
	T(\epsilon\mu \sqrt{d}\lvert \Omega \rvert^{\frac{1}{2}}+\|\mathbf{F}^{n}\|_{0,\Omega})^{2}
	+\epsilon \sqrt{d} \lvert \Omega \rvert^{\frac{1}{2}}\|\mathbf{F}^{n}\|_{0,\Omega}\Big)
	\\
	&\quad
	+4d\epsilon^{2}\lvert \Omega \rvert
	+2\|\mathbf{U}_{h}^{0}\|_{0,\Omega}^{2}
	-4\epsilon\int_\Omega\mathrm{tr}(\mathbf{U}_{h}^{0}) \mathrm{d}\bx.
	\end{align*}

Finally, we apply Lemma~\ref{13} with the choices
\[
	k=\Delta t,\qquad
	a_n=\|\bU_h^n\|_{0,\Omega}^2,\qquad
	\gamma_n=\frac{1}{T},\qquad
	\sigma_n=\Big(1-\frac{\Delta t}{T}\Big)^{-1},
\]
\[
	b_n=\frac{2}{\Delta t}\|\bU_h^n-\bU_h^{n-1}\|_{0,\Omega}^2+4 a_J(\bU_h^n,\bU_h^n),\qquad
	c_n=4\Big(T(\epsilon\mu \sqrt{d}|\Omega|^{1/2}+\|\mathbf F^n\|_{0,\Omega})^2+\epsilon\sqrt{d}|\Omega|^{1/2}\|\mathbf F^n\|_{0,\Omega}\Big),
\]
and
\[
	B=4d\epsilon^{2}\lvert \Omega \rvert
	+2\|\mathbf{U}_{h}^{0}\|_{0,\Omega}^{2}
	-4\epsilon\int_\Omega\mathrm{tr}(\mathbf{U}_{h}^{0}) \mathrm{d}\bx.
\]
Since $\Delta t=T/N$ and $N\ge 2$, we have $k\gamma_n=\Delta t/T=1/N<1$, and hence $\sigma_n=(1-1/N)^{-1}=N/(N-1)$. Therefore
\[
	\exp\Big(k\sum_{n=1}^{N}\sigma_n\gamma_n\Big)
	=\exp\Big(\Delta t\sum_{n=1}^{N}\frac{N}{N-1} \frac{1}{T}\Big)
	=\exp\Big(\frac{N}{N-1}\Big)
	\le e^{2} ,
\]
and Lemma~\ref{13} yields \eqref{Stabilityestimate}.
\end{proof}

The next result states the (energy-norm) error estimate for the method \eqref{eq199}.

\begin{theorem}[A priori error estimate]
  \label{Theorem11}
	Let $k\ge 1$, let $\bU^{0}\in \left(H^{k+1}(\Omega)\right)^{d\times d}$, and let $\bU$ be the solution of \eqref{CDR} satisfying
	\[
	\bU\in  L^{\infty}((0,T);(H^{k+1}(\Omega))^{d\times d})\cap (H^{1}(0,T);(H^{k+1}(\Omega))^{d\times d})\cap H^{2}((0,T);(L^{2}(\Omega))^{d\times d}),
	\]
	with $\bU(\cdot,t)\in (H^{1}_{0}(\Omega))^{d\times d,\textrm{sym}}$ for almost all $t\in [0,T]$.
	Assume in addition that \eqref{eq14} holds for $\bU(t)$ for almost all $t\in[0,T]$, so that $\mathbf I_h\bU^n\in\mathbb V_{\calP}^{\epsilon,\kappa}$ for each $n$.
	Let $\bU_{h}^{n}\in \mathbb{V}_{\calP}$ be the solution of \eqref{eq199} 
	at the time step $n$.
	Defining $\bE^{n}=\bU^{n}_{h}-\bU^{n}$, there exists a constant $C>0$,  independent of $h$ and $\Delta t$ (and depending only on $k$ and mesh shape-regularity), such that
\begin{equation}
	\begin{aligned}
		\Big( \|\bE^N \|_{0,\Omega}^2&+\Delta t\sum_{n=1}^{N}\|\bE^n \|_{a_{J}}^2 \Big)^{\frac{1}{2}}
		\leq C\Bigg[ \Delta t \left\|	\partial_{t}^{2}\mathbf{U} \right\|_{L^{2}((0,T);L^{2}(\Omega))} \\
		&\qquad\qquad+ h^{k}\Big(\big(T\|\bbeta\|_{0,\infty,\Omega}^{2}+\|\mathcal{D}^{\frac{1}{2}}\|_{0,\infty,\Omega}^{2}+\gamma h\|\bbeta\|_{0,\infty,\Omega}+\mu h^{2}\big)^{\frac{1}{2}}
		\left\|	\mathbf{U} \right\|_{L^{2}((0,T);H^{k+1}(\Omega))}\\
			&\qquad\qquad\qquad\qquad\qquad\qquad
			+h\Big(\left\|\partial_{t}\mathbf{U} \right\|_{L^{2}((0,T);H^{k+1}(\Omega))}+ \max\limits_{1\leq n\leq N}|\bU^{n} |_{k+1,\Omega}\Big)\Big)\Bigg].
	\end{aligned}
\end{equation}
\end{theorem}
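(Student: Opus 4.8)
The plan is to run a Galerkin-type argument in which the error is split through the Lagrange interpolant and the variational inequality is tested against the \emph{admissible} interpolant of the exact solution. Write $\bE^n=\boldsymbol{\theta}^n+\boldsymbol{\rho}^n$ with discrete part $\boldsymbol{\theta}^n:=\bU_h^n-\mathbf{I}_h\bU^n\in\mathbb{V}_\calP$ and interpolation part $\boldsymbol{\rho}^n:=\mathbf{I}_h\bU^n-\bU^n$. Since $\bU_h^0=\mathbf{I}_h\bU^0$ we have $\boldsymbol{\theta}^0=\mathbf{0}$, so there is no initial discrete error; the part $\boldsymbol{\rho}^n$ is controlled directly by the interpolation estimate \eqref{tensorlagrange}, and the crux is to bound $\boldsymbol{\theta}^n$.

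First I would derive a clean error relation. Assumption \eqref{eq14} guarantees $\mathbf{I}_h\bU^n\in\mathbb{V}_\calP^{\epsilon,\kappa}$, so it is an admissible test function in \eqref{eq199}; choosing $\bV_h=\mathbf{I}_h\bU^n$ gives $\bV_h-\bU_h^n=-\boldsymbol{\theta}^n$, and rearranging yields $(\delta\bU_h^n,\boldsymbol{\theta}^n)_\Omega+a_J(\bU_h^n,\boldsymbol{\theta}^n)\le(\mathbf{F}^n,\boldsymbol{\theta}^n)_\Omega$. Testing the weak form \eqref{eq82} at $t_n$ with $\boldsymbol{\theta}^n\in(H^1_0(\Omega))^{d\times d,\mathrm{sym}}$ gives $(\partial_t\bU^n,\boldsymbol{\theta}^n)_\Omega+a(\bU^n,\boldsymbol{\theta}^n)=(\mathbf{F}^n,\boldsymbol{\theta}^n)_\Omega$; since $\bU^n$ is smooth its gradient jumps vanish, so $J(\bU^n,\boldsymbol{\theta}^n)=0$ and $a$ may be replaced by $a_J$. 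Subtracting removes $\mathbf{F}^n$ and produces the key inequality $(\delta\bU_h^n-\partial_t\bU^n,\boldsymbol{\theta}^n)_\Omega+a_J(\bE^n,\boldsymbol{\theta}^n)\le 0$. Here the variational-inequality structure enters painlessly: testing with the admissible interpolant yields only a one-sided inequality, but the energy estimate needs only an upper bound, so this is exactly what is required.

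Next I would decompose both terms. Writing $a_J(\bE^n,\boldsymbol{\theta}^n)=\|\boldsymbol{\theta}^n\|_{a_J}^2+a_J(\boldsymbol{\rho}^n,\boldsymbol{\theta}^n)$ and splitting $\delta\bU_h^n-\partial_t\bU^n=\delta\boldsymbol{\theta}^n+(\mathbf{I}_h\delta\bU^n-\delta\bU^n)+(\delta\bU^n-\partial_t\bU^n)$, the identity $(\delta\boldsymbol{\theta}^n,\boldsymbol{\theta}^n)_\Omega=\tfrac{1}{2\Delta t}(\|\boldsymbol{\theta}^n\|_{0,\Omega}^2-\|\boldsymbol{\theta}^{n-1}\|_{0,\Omega}^2+\|\boldsymbol{\theta}^n-\boldsymbol{\theta}^{n-1}\|_{0,\Omega}^2)$ turns the time difference into a telescoping contribution. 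The right-hand side is then handled by Cauchy--Schwarz and Young: the diffusion and CIP parts of $a_J(\boldsymbol{\rho}^n,\boldsymbol{\theta}^n)$ are split so that half is absorbed into $\|\boldsymbol{\theta}^n\|_{a_J}^2$, leaving $\|\mathcal{D}^{1/2}\nabla\boldsymbol{\rho}^n\|_{0,\Omega}^2$ and $J(\boldsymbol{\rho}^n,\boldsymbol{\rho}^n)$; the convection, reaction, temporal-interpolation and time-truncation terms are each paired against $\|\boldsymbol{\theta}^n\|_{0,\Omega}$ via the weighted Young inequality $2ab\le\tfrac{1}{2T}a^2+2Tb^2$ (as in the stability proof), reserving the $\tfrac{1}{2T}\|\boldsymbol{\theta}^n\|_{0,\Omega}^2$ contributions for Grönwall. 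The estimate \eqref{tensorlagrange} (giving $|\boldsymbol{\rho}^n|_1\lesssim h^k|\bU^n|_{k+1}$, $\|\boldsymbol{\rho}^n\|_0\lesssim h^{k+1}|\bU^n|_{k+1}$), the discrete trace inequality \eqref{Tensortrace} applied to $\llbracket\nabla\boldsymbol{\rho}^n\rrbracket$ (giving $J(\boldsymbol{\rho}^n,\boldsymbol{\rho}^n)\lesssim\gamma h\|\bbeta\|_{0,\infty}h^{2k}|\bU^n|_{k+1}^2$), and the standard bounds $\|\mathbf{I}_h\delta\bU^n-\delta\bU^n\|_0\lesssim h^{k+1}|\delta\bU^n|_{k+1}$ and $\|\delta\bU^n-\partial_t\bU^n\|_0^2\lesssim\Delta t\int_{t_{n-1}}^{t_n}\|\partial_t^2\bU\|_0^2\,\mathrm ds$ then assemble exactly the coefficient combination $T\|\bbeta\|_{0,\infty}^2+\|\mathcal{D}^{1/2}\|_{0,\infty}^2+\gamma h\|\bbeta\|_{0,\infty}+\mu h^2$ under the square root.

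Finally I would sum from $n=1$ to $N$ (the $L^2$ terms telescope and $\boldsymbol{\theta}^0=\mathbf{0}$) and apply the discrete Grönwall Lemma~\ref{13} with $a_n=\|\boldsymbol{\theta}^n\|_{0,\Omega}^2$, $\gamma_n=1/T$, and $b_n$ carrying $\|\boldsymbol{\theta}^n\|_{a_J}^2$ and the jump terms, which eliminates the accumulated $\tfrac1T\sum\|\boldsymbol{\theta}^n\|_{0,\Omega}^2$ at the cost of the harmless factor $\exp(N/(N-1))\le e^2$. This bounds $\|\boldsymbol{\theta}^N\|_{0,\Omega}^2+\Delta t\sum_n\|\boldsymbol{\theta}^n\|_{a_J}^2$ by the claimed right-hand side, and the triangle inequality together with \eqref{tensorlagrange} (contributing the $h^{k+1}\max_n|\bU^n|_{k+1}$ term from $\|\boldsymbol{\rho}^N\|_0$ and the $\|\boldsymbol{\rho}^n\|_{a_J}$ contributions) completes the proof. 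The main obstacle is not any single inequality but the bookkeeping of the CIP convection and stabilisation terms: the convection mismatch $(\bbeta\cdot\nabla\boldsymbol{\rho}^n,\boldsymbol{\theta}^n)$ and the jump term $J(\boldsymbol{\rho}^n,\boldsymbol{\theta}^n)$ must be weighted so that the diffusion- and stabilisation-seminorm parts of $\boldsymbol{\theta}^n$ are genuinely absorbed into $\|\boldsymbol{\theta}^n\|_{a_J}^2$ while its remaining $L^2$ factor is kept for Grönwall, and keeping these weights consistent is precisely what fixes the form of the coefficient under the square root.
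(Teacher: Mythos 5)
Your proposal is correct and follows essentially the same route as the paper's own proof: the identical splitting $\bE^n=(\bU_h^n-\mathbf{I}_h\bU^n)+(\mathbf{I}_h\bU^n-\bU^n)$, testing the variational inequality with the admissible interpolant $\mathbf{I}_h\bU^n$ and the weak form with the discrete error, exploiting $J(\bU^n,\cdot)=0$ to pass from $a$ to $a_J$, the telescoping identity, Young's inequality with weights $T$ and $1/T$, the discrete Gr\"onwall lemma with $\gamma_n=1/T$ and factor $\exp(N/(N-1))\le e^2$, the trace-inequality bound $J(\eta_h^n,\eta_h^n)\lesssim\gamma h^{2k+1}\|\bbeta\|_{0,\infty,\Omega}|\bU^n|_{k+1,\Omega}^2$, and the final triangle inequality. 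The only deviations (pairing the reaction term with $\|\boldsymbol{\theta}^n\|_{0,\Omega}$ rather than absorbing it through the $a_J$-norm Cauchy--Schwarz, and your explicit half-absorption of the diffusion/CIP cross terms) are cosmetic reorganisations of the same estimates.
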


\begin{proof}
	Let $\mathbf{U}^n = \mathbf{U}(t_n)$ denote the exact solution at time $t_n$.
	Using the test function $\mathbf{I}_h \mathbf{U}^n\in \mathbf{V}_{\calP}^{\epsilon,\kappa}$ in \eqref{eq199} gives
	\begin{align}
		\begin{split}\label{inequalii}
			\left(\delta\mathbf{U}_h^n, \mathbf{I}_h\mathbf{U}^n - \mathbf{U}_h^n\right)_{\Omega}
			+  a_J(\mathbf{U}_h^{n}, \mathbf{I}_h\mathbf{U}^n - \mathbf{U}_h^n)
			\geq (\mathbf{F}^{n}, \mathbf{I}_h\mathbf{U}^n - \mathbf{U}_h^n)_{\Omega}.
		\end{split}
	\end{align}
	Also, setting $\mathbf{V}_h = \mathbf{I}_h \mathbf{U}^n - \mathbf{U}_h^n$ in the continuous problem \eqref{eq82} yields
	\begin{align}
		\begin{split}\label{exact12}
			\left(\partial_t\mathbf{U}(t_{n}), \mathbf{I}_h \mathbf{U}^n - \mathbf{U}_h^n\right)_{\Omega}
			+ a(\mathbf{U}(t_{n}), \mathbf{I}_h \mathbf{U}^n - \mathbf{U}_h^n)
			=(\mathbf{F}(t_{n}), \mathbf{I}_h \mathbf{U}^n - \mathbf{U}_h^n)_{\Omega}.
		\end{split}
	\end{align}

	Now, we decompose $\bE^{n}=\bU^{n}_{h}-\bU^{n}$ as
\begin{align}
	\bE^{n}=\left(\bU^{n}_{h}-\mathbf{I}_h \mathbf{U}^n\right)+\left(\mathbf{I}_{h}\bU^{n}-\bU^{n}\right)=:\bE^{n}_{h}+\eta^{n}_{h}.\label{error1}
\end{align}
Since $k\ge 1$ and $\bU^{n}\in (H^{k+1}(\Omega))^{d\times d}\subset (H^{2}(\Omega))^{d\times d}$, we have $\llbracket \nabla \bU^{n}\rrbracket=\mathbf 0$ on every interior facet and hence
\[
	J(\bU^{n},\bV_h)=0\qquad \forall \bV_h\in\mathbb V_{\calP}.
\]
Consequently,
\begin{align}
	J(\bU^{n}_{h}, \bV_{h})
	=J(\bU^{n}_{h}- \bU^{n}, \bV_{h})
	=J( \bE^{n}_{h}, \bV_{h})+J(\eta^{n}_{h}, \bV_{h}).\label{CIP12}
\end{align}
Adding $J(\bU^n,\mathbf I_h\bU^n-\bU_h^n)=0$ to \eqref{exact12}, we may equivalently replace $a(\bU^n,\cdot)$ by $a_J(\bU^n,\cdot)$ in \eqref{exact12}.
Therefore, subtracting \eqref{exact12} from \eqref{inequalii} gives
\begin{align}\label{subtraction}
	\Big(\delta\mathbf{U}_h^n - \partial_t\mathbf{U}(t_{n}), \mathbf{I}_h \mathbf{U}^n - \mathbf{U}_h^n\Big)_{\Omega}
	+ a_J(\bU_h^{n}-\bU(t_{n}), \mathbf{I}_h \mathbf{U}^n - \mathbf{U}_h^n)
	\geq 0.
\end{align}

The first term in \eqref{subtraction} may be rewritten as
\begin{align}
	\delta \bU^{n}_{h}-\partial_{t} \bU(t_{n})
	&=\big(\delta \bU^{n}_{h}- \delta(\mathbf{I}_{h}\bU^{n})\big)
	+\big(\delta\bU^{n}-\partial_{t} \bU(t_{n})\big)
	+\big(\delta(\mathbf{I}_{h}\bU^{n})-\delta\bU^{n}\big)\nonumber\\
	&= \delta \bE^{n}_{h}+\big(\delta\bU^{n}-\partial_{t} \bU(t_{n})\big)+\delta \eta^{n}_{h}. \label{time12}
\end{align}
Using \eqref{time12} and $\mathbf I_h\bU^n-\bU_h^n=-\bE_h^n$, \eqref{subtraction} becomes
\begin{align}\label{Eq12367443}
	\big(\delta \bE^{n}_{h}, -\bE^{n}_{h}\big)_{\Omega}+a_{J}(\bE^{n}_{h},-\bE^{n}_{h})
	\geq
	(\partial_{t} \bU(t_{n})-\delta \bU^{n},-\bE^{n}_{h})_{\Omega}
	-(\delta \eta^{n}_{h}, -\bE^{n}_{h})_{\Omega}
	-a_{J}( \eta^{n}_{h}, -\bE^{n}_{h}).
\end{align}
Multiplying \eqref{Eq12367443} by $-1$ gives the equivalent form
\begin{align}\label{Eq12367443b}
	\big(\delta \bE^{n}_{h}, \bE^{n}_{h}\big)_{\Omega}+a_{J}(\bE^{n}_{h},\bE^{n}_{h})
	\leq
	(\delta \bU^{n}-\partial_{t} \bU(t_{n}),\bE^{n}_{h})_{\Omega}
	+(\delta \eta^{n}_{h}, \bE^{n}_{h})_{\Omega}
	+a_{J}( \eta^{n}_{h}, \bE^{n}_{h}).
\end{align}

Using the identity $2(a^{2}-ab)=a^{2}-b^{2}+(a-b)^{2}$ in $(\delta\bE_h^n,\bE_h^n)_\Omega$ and the Cauchy--Schwarz inequality on the right-hand side of \eqref{Eq12367443b}, we obtain
\begin{align*}
	&\frac{1}{2\Delta t}\Big(\|\bE_h^n \|_{0,\Omega}^2-\| \bE_h^{n-1}\|_{0,\Omega}^2+\|\bE_h^n - \bE_h^{n-1}\|_{0,\Omega}^2\Big)
	+ \|\bE_h^n\|_{a_J}^2\\
	\leq&\, \Big(\|\delta \bU^{n}-\partial_{t}\bU(t_n)\|_{0,\Omega}+\|\delta\eta_h^n\|_{0,\Omega}
	+\|\bbeta\|_{0,\infty,\Omega} |\eta_h^n|_{1,\Omega}\Big)\, \|\bE_h^n\|_{0,\Omega} + \|\eta_h^n\|_{a_J} \|\bE_h^n\|_{a_J}.
\end{align*}
Applying Young's inequality yields
\begin{align*}
	&\|\bE_h^n \|_{0,\Omega}^2-\| \bE_h^{n-1}\|_{0,\Omega}^2+\|\bE_h^n - \bE_h^{n-1}\|_{0,\Omega}^2
	+ \Delta t \|\bE_h^n\|_{a_J}^{2}\\
	\leq&\,  2\Delta t\Big(T(\|\delta \bU^{n}-\partial_{t}\bU(t_n)\|_{0,\Omega}+\|\delta\eta_h^n\|_{0,\Omega}
	+\|\bbeta\|_{0,\infty,\Omega} |\eta_h^n|_{1,\Omega})^2+\frac{1}{T}\|\bE_h^n\|_{0,\Omega}^2\Big)
	+\Delta t \| \eta_h^n\|_{a_J}^{2}.
\end{align*}

Summing from $n=1$ to $n=N$ and using $\bE_h^{0}=\bU_h^0-\mathbf I_h\bU^0=\mathbf 0$ gives
\begin{equation}
	\begin{aligned}\label{error432}
	&\|\bE_h^N \|_{0,\Omega}^2
	+\sum_{n=1}^{N}\|\bE_h^n - \bE_h^{n-1}\|_{0,\Omega}^2
	+ \Delta t \sum_{n=1}^{N}\|\bE_h^n \|_{a_{J}}^{2}\\
	\leq&\, C\Delta t\sum_{n=1}^{N}\bigg(
	T\Big(\|\delta \bU^{n}-\partial_{t}\bU(t_n)\|_{0,\Omega}^{2}
	+\|\delta\eta_h^n\|_{0,\Omega}^{2}
	+\|\bbeta\|_{0,\infty,\Omega}^{2} |\eta_h^n|_{1,\Omega}^{2}\Big)
	+\|\eta_h^n\|_{a_J}^{2}
	+\frac{1}{T}\|\bE_h^n\|_{0,\Omega}^{2}\bigg).
	\end{aligned}
\end{equation}

We now apply Gr\"{o}nwall's Lemma~\ref{13} with the choices
\[
	k=\Delta t,\qquad
	a_n=\|\bE_h^n\|_{0,\Omega}^2,\qquad
	\gamma_n=\frac{1}{T},\qquad
	\sigma_n=\Big(1-\frac{\Delta t}{T}\Big)^{-1},
\]
\[
	b_n=\frac{1}{\Delta t}\|\bE_h^n-\bE_h^{n-1}\|_{0,\Omega}^2+\|\bE_h^n\|_{a_J}^{2},
	\]
\[
	c_n=C\Big(T(\|\delta \bU^{n}-\partial_{t}\bU(t_n)\|_{0,\Omega}^{2}
	+\|\delta\eta_h^n\|_{0,\Omega}^{2}
	+\|\bbeta\|_{0,\infty,\Omega}^{2} |\eta_h^n|_{1,\Omega}^{2})
	+\|\eta_h^n\|_{a_J}^{2}\Big),
\]
and $B=0$. Since $\Delta t=T/N$ and $N\ge 2$, we have $k\gamma_n=\Delta t/T=1/N<1$ and hence
\[
	\exp\Big(k\sum_{n=1}^{N}\sigma_n\gamma_n\Big)
	=\exp\Big(\frac{N}{N-1}\Big)\le e^{2}.
\]
Therefore,
\begin{align}
	&\|\bE_h^N \|_{0,\Omega}^2+\Delta t \sum_{n=1}^{N}\|\bE_h^n \|_{a_{J}}^{2} \nonumber\\
	\leq&\,  C e^{2} \Delta t\sum_{n=1}^{N}\Big(
	T\|\delta \bU^{n}-\partial_{t}\bU(t_n)\|_{0,\Omega}^{2}
	+T\|\delta\eta_h^n\|_{0,\Omega}^{2}
	+T\|\bbeta\|_{0,\infty,\Omega}^{2} |\eta_h^n|_{1,\Omega}^{2}
	+\|\eta_h^n\|_{a_J}^{2}\Big).\label{53last}
\end{align}

For the time truncation term, Taylor's theorem gives, for each $n$,
\[
	\partial_t \bU(t_n)-\delta \bU^n
	=\frac{1}{\Delta t}\int_{t_{n-1}}^{t_n}(t_n-s) \partial_t^2\bU(s) {\rm d}s,
\]
and hence, by Cauchy--Schwarz,
\begin{equation}
	\sum_{n=1}^{N} \left\|\partial_t\mathbf{U}(t_{n})-\delta \mathbf{U}^n\right\|_{0,\Omega}^2
	\leq C\Delta t\int_{0}^{T} \left\|\partial_{t}^{2}\mathbf{U}(t)\right\|_{0,\Omega}^2 {\rm d}t.  \label{time_trunc}
\end{equation}

Next, using the tensor Lagrange approximation \eqref{tensorlagrange}, we have
\begin{align}
	|\eta^{n}_{h}|_{1,\Omega}^{2}\leq Ch^{2k}|\bU^{n} |_{k+1,\Omega}^{2},
	\qquad
	\|\eta^n_h\|_{0,\Omega}^2\le C  h^{2k+2}|\bU^n|_{k+1,\Omega}^{2}.\label{inequal32}
\end{align}
Moreover,
\begin{equation}
	\begin{aligned}
		\sum_{n=1}^{N}\|\delta \eta_{h}^{n}\|_{0,\Omega}^{2}
		&=\sum_{n=1}^{N}\Big\|\delta (\mathbf{I}_h\bU^{n})-\delta(\bU^{n})\Big\|_{0,\Omega}^{2}
		\leq Ch^{2k+2}\sum_{n=1}^{N}|\delta \mathbf{U}^{n} |_{k+1,\Omega}^{2}\\
		&\leq C h^{2k+2}\sum_{n=1}^{N}\left|\frac{1}{\Delta t}\int_{t_{n-1}}^{t_{n}}\partial_{t} \mathbf{U}(t) {\rm d}t\right|_{k+1,\Omega}^{2}
		\leq C\frac{h^{2k+2}}{\Delta t}\int_{0}^{T}|\partial_{t}\mathbf{U}(t)|_{k+1,\Omega}^{2}{\rm d}t.
	\end{aligned}
\end{equation}

Finally, for the stabilisation part we use the discrete trace inequality \eqref{Tensortrace} (applied componentwise) together with standard scaling and the interpolation estimates \eqref{tensorlagrange} to obtain
\begin{align}
	J(\eta_{h}^{n},\eta_{h}^{n})
	= \gamma \sum_{F\in\calF_I^{}}\int_F\|\bbeta \|_{0,\infty,F}  h_{F}^{2} \llbracket\nabla \eta_{h}^{n} \rrbracket :\llbracket\nabla \eta_{h}^{n} \rrbracket  {\rm d}s
	\leq C\gamma h^{2k+1}\|\bbeta\|_{0,\infty,\Omega} |\bU^{n} |_{k+1,\Omega}^{2}.\label{JJJ}
\end{align}
Using \eqref{inequal32} and \eqref{JJJ}, we obtain
\begin{equation}
	\begin{aligned}
		T\|\bbeta\|_{0,\infty,\Omega}^{2} |\eta^{n}_{h}|_{1,\Omega}^{2}
		+\|\eta^{n}_{h}\|_{a_J}^{2}
		&\leq Ch^{2k}\Big(\big(T\|\bbeta\|_{0,\infty,\Omega}^{2}
		+\|\mathcal{D}^{\frac{1}{2}}\|_{0,\infty,\Omega}^{2}
		+\gamma h\|\bbeta\|_{0,\infty,\Omega}
		+\mu h^{2}\big) |\mathbf{U}^{n}|_{k+1,\Omega}^{2}\Big).
	\end{aligned}
\end{equation}
Summing in $n$ and using Cauchy--Schwarz yields
\begin{align}\label{eq:summed_interp_bound}
		&\sum_{n=1}^{N}\Big(T\|\bbeta\|_{0,\infty,\Omega}^{2} |\eta^{n}_{h}|_{1,\Omega}^{2}
		+\|\eta^{n}_{h}\|_{a_J}^{2}\Big) \nonumber\\
		\leq&\,  \frac{Ch^{2k}}{\Delta t}\big(T\|\bbeta\|_{0,\infty,\Omega}^{2}
		+\|\mathcal{D}^{\frac{1}{2}}\|_{0,\infty,\Omega}^{2}
		+\gamma h\|\bbeta\|_{0,\infty,\Omega}
		+\mu h^{2}\big)\int_{0}^{T}|\mathbf{U}(t)|_{k+1,\Omega}^{2}{\rm d}t.
\end{align}

Inserting \eqref{time_trunc}, \eqref{eq:summed_interp_bound} and the bound for $\sum_n\|\delta\eta_h^n\|_{0,\Omega}^2$ into \eqref{53last}, and rearranging, gives
\begin{equation}
	\begin{aligned}
		\max_{1\leq n\leq N}\|\bE_h^n \|_{0,\Omega}^2+\Delta t\sum_{n=1}^{N}\|\bE_h^n \|_{a_{J}}^2
		\leq C\Bigg[
		\Delta t^{2}\int_{0}^{T} \left\|\partial_{t}^{2}\mathbf{U}\right\|_{0,\Omega}^2 {\rm d}t\\
		\qquad\qquad
		+h^{2k}\Big(\big(T\|\bbeta\|_{0,\infty,\Omega}^{2}+\|\mathcal{D}^{\frac{1}{2}}\|_{0,\infty,\Omega}^{2}+\gamma h\|\bbeta\|_{0,\infty,\Omega}+\mu h^{2}\big)\int_{0}^{T}|\mathbf{U}(t)|_{k+1,\Omega}^{2}{\rm d}t\\
		\qquad\qquad\qquad\qquad
		+h^{2}\int_{0}^{T}|\partial_{t}\mathbf{U}(t)|_{k+1,\Omega}^{2}{\rm d}t\Big)\Bigg].
	\end{aligned}
\end{equation}

Finally, using the triangle inequality,
\[
	\|\bE^n\|_{0,\Omega} \le \|\bE_h^n\|_{0,\Omega}+\|\eta_h^n\|_{0,\Omega},
	\qquad
	\|\bE^n\|_{a_J} \le \|\bE_h^n\|_{a_J}+\|\eta_h^n\|_{a_J},
\]
together with \eqref{inequal32}, yields the stated estimate after taking square roots.
\end{proof}

\section{Numerical experiments}\label{sec:numerics}

We implement the nodally bound-preserving scheme \eqref{eq199} using the FEniCS software framework \cite{fenicsx2023, dolfinx2023}. The variational inequality is resolved at each time step through an iterative procedure based on the approach presented in \cite{Kor76}.  Alternative solvers for closely related nodally constrained schemes include active-set strategies and projection-based fixed-point variants, see \cite{ashby2025nodally,barrenechea2025nodally}.
We denote by $\mathbb{A}$ and $\mathbf{L}$ the assembled finite element stiffness matrix and load vector of the problem \eqref{variational} at the time step $n$, respectively, and introduce a parameter $\omega > 0$. Let $\mathcal{P}$ be the (nonlinear) nodal projection operator $\mathbb{V}_\calP^{}\to \mathbb{V}_{\calP}^{\epsilon,\kappa}$ defined by \eqref{posoperator}.

Denoting by $\mathbf{\bU}^{n}$ the vector of degrees of freedom of the solution of the problem at time $n$, the iterative solution of \eqref{variational} takes as initial condition
$(\mathbf{\bU}^{n})^0=\mathbf{\bU}^{n-1}$, and then, for $r=1,2,\cdots$, computes the iteration
\begin{equation}
\begin{aligned}\label{iterative12}
	(\mathbf{\bV}^{n})^r &= \mathcal{P}\left((\mathbf{\bU}^{n})^{r-1} - \omega\left(\mathbb{A} (\mathbf{\bU}^{n})^{r-1} - \mathbf{L}\right)\right),  \\
	(\mathbf{\bU}^{n})^r &= \mathcal{P}\left((\mathbf{\bU}^{n})^{r-1} - \omega\left(\mathbb{A} (\mathbf{\bV}^{n})^r - \mathbf{L}\right)\right).
\end{aligned}
\end{equation}
The iterations continue until $\|(\mathbf{\bU}^{n})^r - (\mathbf{\bU}^{n})^{r-1}\|_{\ell^2} < \text{tol} := 10^{-6}$. 

Our computational setup consists of two-dimensional domains discretised using triangular meshes. In the following examples, $P-1$ indicates the number of divisions in the $x$ and
$y$ directions, resulting in a total of $P^2$ vertices, including the
boundary. 

We test both the original method \eqref{eq199},  referred to as (BP-Euler), and also include the following Crank--Nicolson
version: we set
\begin{equation*}
 t_{n-\frac{1}{2}} = \frac{t_{n}+t_{n-1}}{2}\quad,\quad 
 \mathbf{U}_{h}^{n-\frac{1}{2}} := (\mathbf{U}_{h}^{n} +\mathbf{U}_{h}^{n-1})/2,
\end{equation*}
the Crank--Nicolson version of \eqref{eq199} reads:
\begin{equation}
	\left\{ \begin{array}{ll}
		\text{For $1\leq n \leq N$, find $\mathbf{U}_h^n \in \mathbb{V}_{\calP}^{\epsilon,\kappa}$ such that}\vspace{.1cm}
		\\  
		(\delta (\mathbf{U}_{h}^{n}),\mathbf{V}_{h}-\mathbf{U}_{h}^{n})_{\Omega} + a_{J}(\mathbf{U}_{h}^{n-\frac{1}{2}},\mathbf{V}_{h}-\mathbf{U}_{h}^{n}) \geq (\mathbf{F}^{n-\frac{1}{2}},\mathbf{V}_{h}-\mathbf{U}_{h}^{n})_{\Omega} \hspace{1cm} \forall \mathbf{V}_{h}\in \mathbb{V}_{\calP}^{\epsilon,\kappa}, \vspace{.1cm}\\
		\hspace{4.8cm} \mathbf{U}_{h}^{0} = \mathbf{I}_{h}\mathbf{U}^{0}.
	\end{array} \right.\label{eq199CN}
\end{equation}
We shall refer to this last variant as the bound-preserving Crank--Nicolson method (BP-CN), and will report its results even if its stability and convergence have not been established.

\begin{example}({\bf Circular convection})
In this example we take $d=3$.
We first test the performance of the present method in the extension to a time-dependent tensor-valued setting (introduced in~\cite{lohmann2019algebraic}) of the
stationary circular convection problem~\cite{hubbard2007non}. The partial differential equation at hand is:
 	\begin{equation}
 		\begin{split}
 			\left\{ \begin{array}{ll}  
 				\partial_{t}\mathbf{U}+\mathrm{div}(\bbeta \mathbf{U})& = 0  \hspace{1cm} \text{in } (0,T]\times\Omega,\quad \Omega = (0,1)^2,\\
 				\hspace{1.6cm} \mathbf{U}(\cdot,0) &= \mathbf{I}\ \text{in } \Omega,\\
 				\hspace{2.3cm}\mathbf{U} &= \mathbf{U}_{\text{in}} \quad \text{on } (0,T]\times \Gamma_{\text{in}} = (0,T]\times\big([0,1] \times \{0\} \cup \{1\} \times [0,1]\big),
 			\end{array} \right.\label{example22}
 		\end{split}
 	\end{equation}
 	where $\bbeta = (-y, x)^{T}$, and so, the inflow boundary
 	$\Gamma_{\text{in}} := \{ \bx\in \partial \Omega : \bbeta(\bx) \cdot \mathbf{n}(\bx) < 0 \}$ is indeed the one specified in \eqref{example22}.
\end{example}

The inflow boundary condition \(\mathbf{U}_{\text{in}} : \Gamma_{\text{in}} \to \mathbb{S}_d\) is transported around the vortex centre,
located at the lower-left corner of the domain, \( \boldsymbol{x}^* = (0,0)^{T}\) (see Figure~\ref{Convectionfig}).
Consequently, the solution \(\mathbf{U} : \Omega \to \mathbb{S}_d\) depends solely on the radial distance
\[
r = \| \boldsymbol{x} - \boldsymbol{x}^* \|_2 = \|  \boldsymbol{x} \|_2,
\]
from the origin, and is uniquely determined by the inflow boundary condition \(\mathbf{U}_{\text{in}}\).
\begin{figure}[h!]
	\centering
	\includegraphics[width=0.4\textwidth]{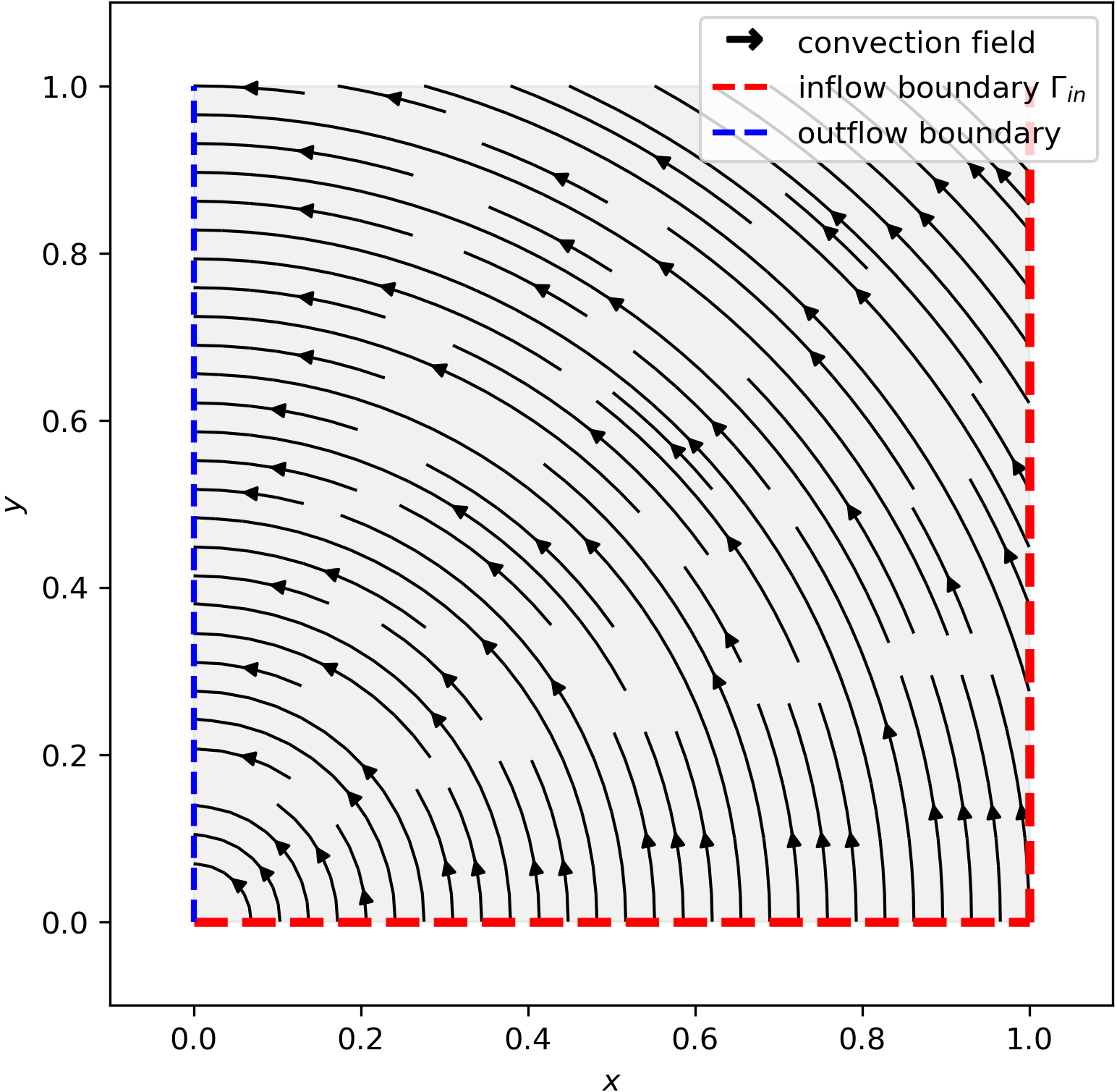}
	\caption{Domain $\Omega$ with velocity field $\bbeta = (-y, x)^{T}$ and inflow boundary $\Gamma_{\text{in}}$.}\label{Convectionfig}
\end{figure}

In the following, we consider two boundary conditions. The first one is the following smooth function,
built with the aim of validating the theoretical results presented in Theorem~\ref{Theorem11}:
\[
\mathbf{U} =
\begin{pmatrix}
	\sin \tilde{r} & \cos \tilde{r} & 0 \\
	\cos \tilde{r} & -\sin \tilde{r} & 0 \\
	0 & 0 & 1
\end{pmatrix}
\begin{pmatrix}
	\sin \tilde{r} & 0 & 0 \\
	0 & 1-\sin \tilde{r} & 0 \\
	0 & 0 & 0
\end{pmatrix}
\begin{pmatrix}
	\sin \tilde{r} & \cos \tilde{r} & 0 \\
	\cos \tilde{r} & -\sin \tilde{r} & 0 \\
	0 & 0 & 1
\end{pmatrix},
\qquad
\tilde{r} := \frac{3}{4} \pi r.
\]
Here, we employ $\gamma = 0.1$ in~\eqref{eq10}, and
in the iterative method \eqref{iterative12} we use $\omega=10^{-4}$.

The convergence behaviour of BP--Euler is illustrated in Figure~\ref{figconvergence}.
In Figure~\ref{figconvergence}\subref{figconvergence1}, \(\|E^{N}\|_{0,\Omega}\) denotes the norm of the difference between the exact solution
and the computed solution at the final time step. In all experiments we set \(T = 4\).
As expected, and as proved in Theorem~\ref{Theorem11}, the Euler method exhibits first-order convergence in time.

Figure~\ref{figconvergence}\subref{figconvergence2} presents the convergence with respect to mesh refinement for the BP--Euler method using $\mathbb{P}_{1}$ and $\mathbb{P}_{2}$ elements.
We observe a convergence of approximately order $1.5$ when using $\mathbb{P}_{1}$ elements and order $2.5$ when using $\mathbb{P}_{2}$ elements.
This shows that the current method also provides the $O(h^{k+1/2})$ convergence proven in \cite{JP86} for a DG discretisation of a scalar transport equation.
The proof of this fact for the CIP stabilisation is an open question, whereas an $O(h^{k+1/2})$ result for a related SUPG setting has been reported in \cite{ashby2025nodally} in the steady-state case.

\begin{figure}[H]
	\centering
	\subfloat[]{
		\includegraphics[width=0.48\textwidth]{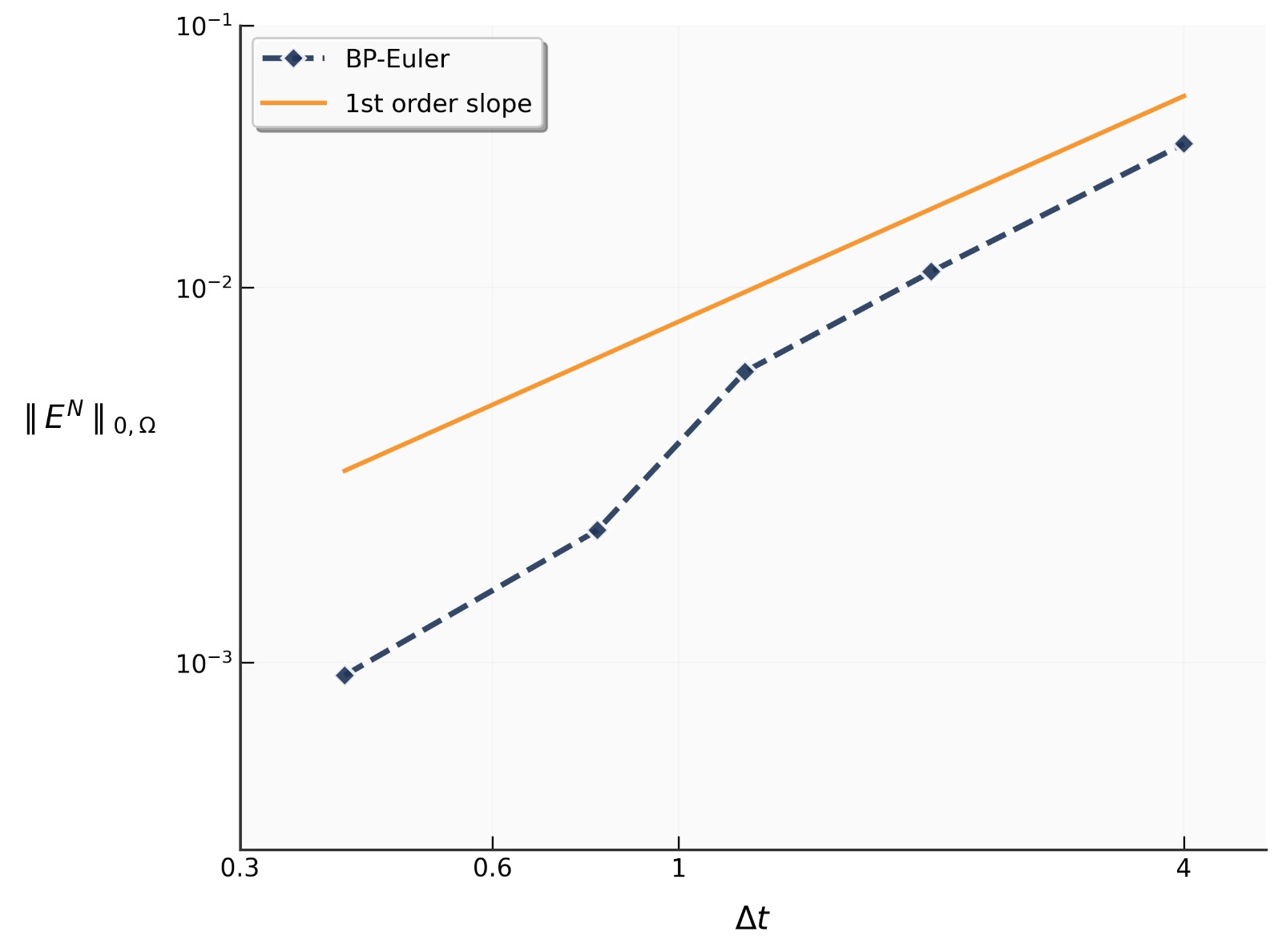}
		\label{figconvergence1}
	}
	\subfloat[]{
		\includegraphics[width=0.48\textwidth]{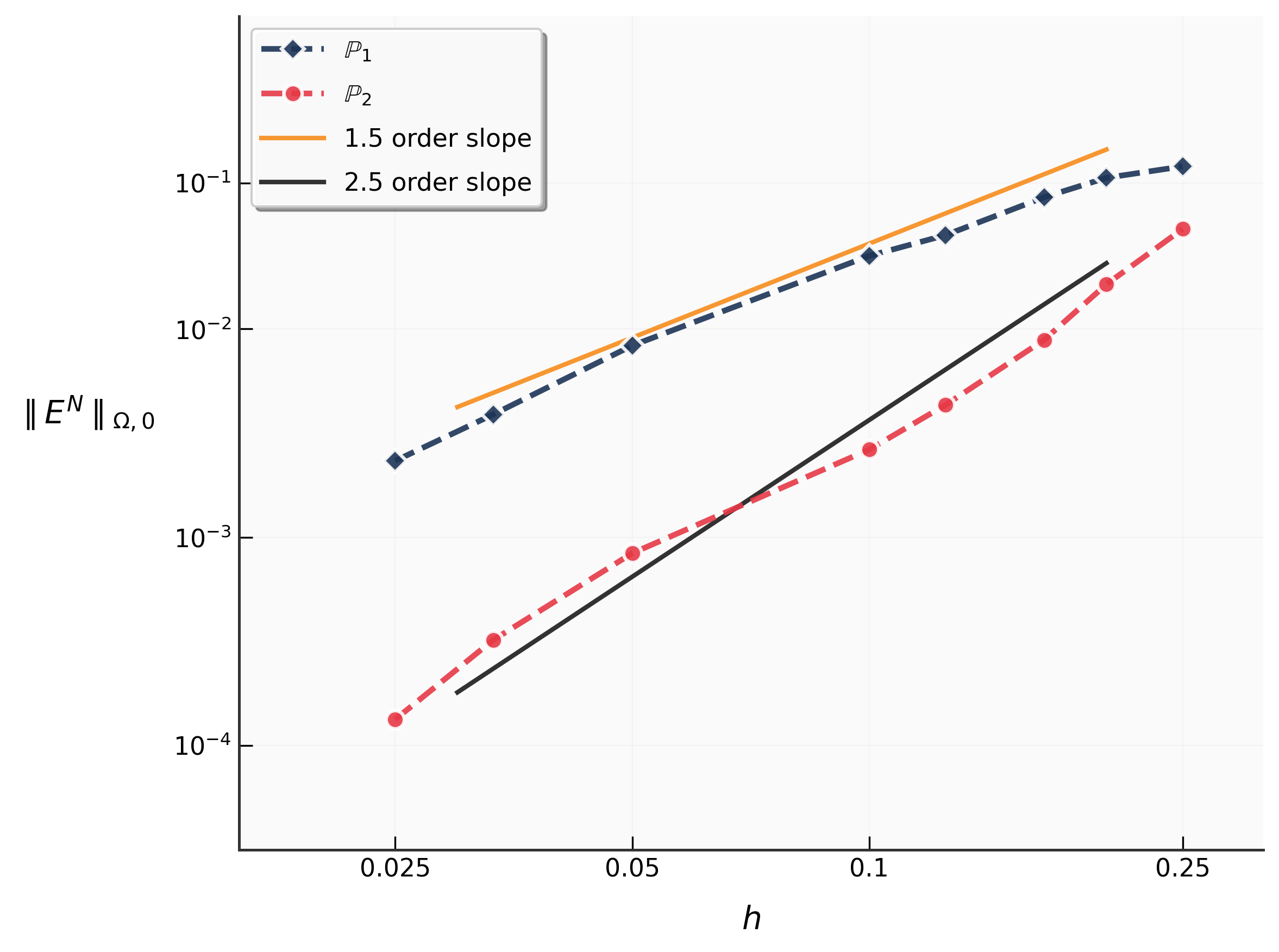}
		\label{figconvergence2}
	}
	\caption{\textbf{Left:} Convergence with respect to the time step size $\Delta t$ for $\mathbb{P}_{1}$ elements with $h=1/50$.
	\textbf{Right:} Convergence with respect to the mesh size $h$ for the BP--Euler method using $\mathbb{P}_{1}$ and $\mathbb{P}_{2}$ elements with $\Delta t = 1/500$.}
	\label{figconvergence}
\end{figure}
 	
\subsubsection{Discontinuous solution}
Next, to test the performance of the method for a problem with sharp fronts, we consider the following discontinuous inflow boundary condition:
\[
\mathbf{U}_{\mathrm{in}} =
\begin{cases}
	\mathbf{U}_1, & 0 < r < \frac{1}{2}, \\
	\mathbf{U}_2, & \frac{1}{2} \leq r < \frac{2}{3}, \\
	\mathbf{U}_3, & \frac{2}{3} \leq r < \frac{3}{4}, \\
	\mathbf{U}_4, & \frac{3}{4} \leq r < \frac{4}{5}, \\
	\mathbf{U}_5, & \frac{4}{5} \leq r < \sqrt{2},
\end{cases}
\]
where the constant parts and their eigenvalue decompositions are given by
\[
\mathbf{U}_1 = \tfrac{1}{3}
\begin{pmatrix}
	1 & 0 & 0 \\
	0 & 1 & 0 \\
	0 & 0 & 1
\end{pmatrix},
\]
\[
\mathbf{U}_2 = \tfrac{1}{75}
\begin{pmatrix}
	32 & 24 & 0 \\
	24 & 18 & 0 \\
	0 & 0 & 25
\end{pmatrix}
= \tfrac{1}{5}
\begin{pmatrix}
	4 & 3 & 0 \\
	3 & -4 & 0 \\
	0 & 0 & 5
\end{pmatrix}
\begin{pmatrix}
	\tfrac{2}{3} & 0 & 0 \\
	0 & 0 & 0 \\
	0 & 0 & \tfrac{1}{3}
\end{pmatrix}
\tfrac{1}{5}
\begin{pmatrix}
	4 & 3 & 0 \\
	3 & -4 & 0 \\
	0 & 0 & 5
\end{pmatrix},
\]
\[
\mathbf{U}_3 = \tfrac{1}{3}
\begin{pmatrix}
	1 & -1 & 0 \\
	-1 & 1 & 0 \\
	0 & 0 & 1
\end{pmatrix}
= \tfrac{1}{2}
\begin{pmatrix}
	\sqrt{2} & \sqrt{2} & 0 \\
	\sqrt{2} & -\sqrt{2} & 0 \\
	0 & 0 & 2
\end{pmatrix}
\begin{pmatrix}
	0 & 0 & 0 \\
	0 & \tfrac{2}{3} & 0 \\
	0 & 0 & \tfrac{1}{3}
\end{pmatrix}
\tfrac{1}{2}
\begin{pmatrix}
	\sqrt{2} & \sqrt{2} & 0 \\
	\sqrt{2} & -\sqrt{2} & 0 \\
	0 & 0 & 2
\end{pmatrix},
\]
\[
\mathbf{U}_4 = \tfrac{1}{3}
\begin{pmatrix}
	1 & 1 & 0 \\
	1 & 1 & 0 \\
	0 & 0 & 1
\end{pmatrix}
= \tfrac{1}{2}
\begin{pmatrix}
	\sqrt{2} & \sqrt{2} & 0 \\
	\sqrt{2} & -\sqrt{2} & 0 \\
	0 & 0 & 2
\end{pmatrix}
\begin{pmatrix}
	\tfrac{2}{3} & 0 & 0 \\
	0 & 0 & 0 \\
	0 & 0 & \tfrac{1}{3}
\end{pmatrix}
\tfrac{1}{2}
\begin{pmatrix}
	\sqrt{2} & \sqrt{2} & 0 \\
	\sqrt{2} & -\sqrt{2} & 0 \\
	0 & 0 & 2
\end{pmatrix},
\]
\[
\mathbf{U}_5 = \tfrac{1}{3}
\begin{pmatrix}
	1 & 1 & 1 \\
	1 & 1 & 1 \\
	1 & 1 & 1
\end{pmatrix}
= \tfrac{\sqrt{6}}{6}
\begin{pmatrix}
	\sqrt{2} & \sqrt{3} & 1 \\
	\sqrt{2} & -\sqrt{3} & 0 \\
	\sqrt{2} & 0 & -2
\end{pmatrix}
\begin{pmatrix}
	1 & 0 & 0 \\
	0 & 0 & 0 \\
	0 & 0 & 0
\end{pmatrix}
\tfrac{\sqrt{6}}{6}
\begin{pmatrix}
	\sqrt{2} & \sqrt{3} & 1 \\
	\sqrt{2} & -\sqrt{3} & 0 \\
	\sqrt{2} & 0 & -2
\end{pmatrix}.
\]
The data are constructed such that the trace is identically equal to $1$ on the inflow boundary $\Gamma_{\text{in}}$, and the eigenvalues remain bounded between $0$ and $1$. Owing to the solenoidal velocity field, these properties are preserved throughout the domain, and thus we take $\epsilon=0$ and $\kappa=1$.
The discontinuities are designed to illustrate the capability of numerical methods to handle different scenarios (see~\cite{lohmann2019algebraic} for more details).

We report results for four schemes: CIP--Euler (the unconstrained baseline), BP--Euler (the bound-preserving method presented in this work), and the analogous Crank--Nicolson variants CIP--CN and BP--CN.
Figure~\ref{fig1} shows the minimal and maximal eigenvalues obtained with the CIP--Euler and BP--Euler methods using $\mathbb{P}_{1}$ elements, together with cross-sections of these eigenvalues taken along the line $y=x$.
In these experiments, we set $P=121$ and $\gamma=10^{-3}$ for the stabilisation term \eqref{eq10},
and used a time step of $\Delta t = 10^{-3}$ and $T=4$. Moreover, we choose $\omega =10^{-2}$ in the iterations of \eqref{iterative12}, which ensures very fast convergence at each time step, reaching convergence after only a few iterations.

For the CIP--Euler solution, both the minimal and maximal eigenvalues violate the bounds given by the inflow data, whereas the BP--Euler method preserves the bounds.
A similar behaviour is observed for the CIP--CN and BP--CN schemes, and the corresponding $\mathbb{P}_{1}$ results are presented in Figure~\ref{fig22}.
Finally, Figure~\ref{fig33} reports the four schemes using quadratic elements ($k=2$), together with the corresponding cross-sections along $y=x$.
The same conclusions hold for $\mathbb{P}_{2}$ elements.  In particular, although the admissibility constraints are imposed only at the nodes, the plotted eigenvalues remain within $[0,1]$ also between the nodes in these tests.

\begin{figure}[H]
	\centering
	\subfloat[Minimal eigenvalue for the CIP--Euler solution.]{
		\includegraphics[width=0.45\textwidth]{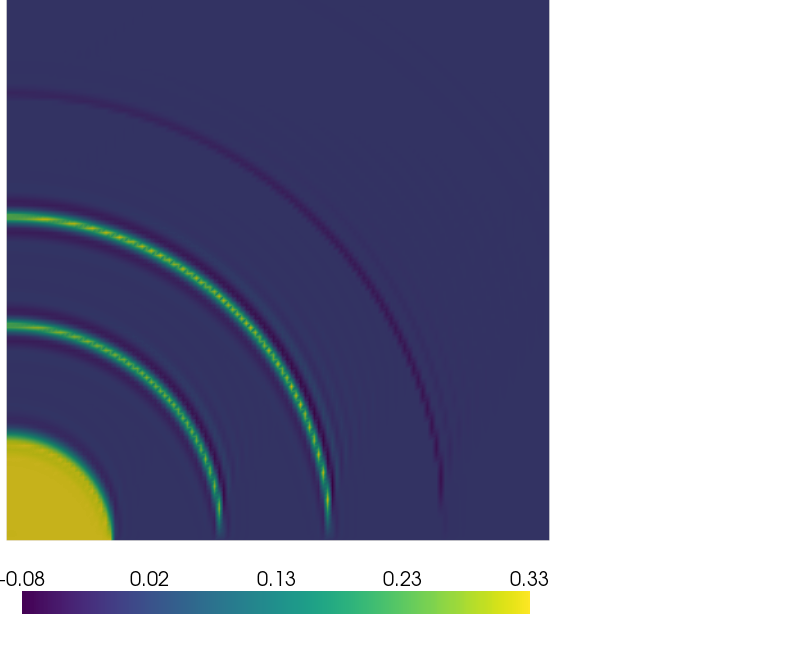}
	}
	\subfloat[Maximal eigenvalue for the CIP--Euler solution.]{
		\includegraphics[width=0.45\textwidth]{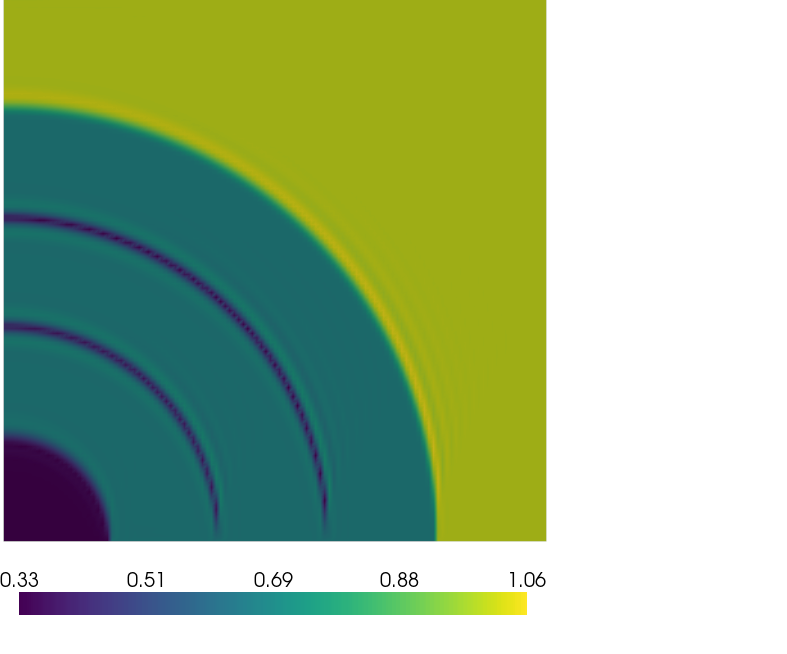}
	}\\
	\subfloat[Minimal eigenvalue for the BP--Euler solution.]{
		\includegraphics[width=0.45\textwidth]{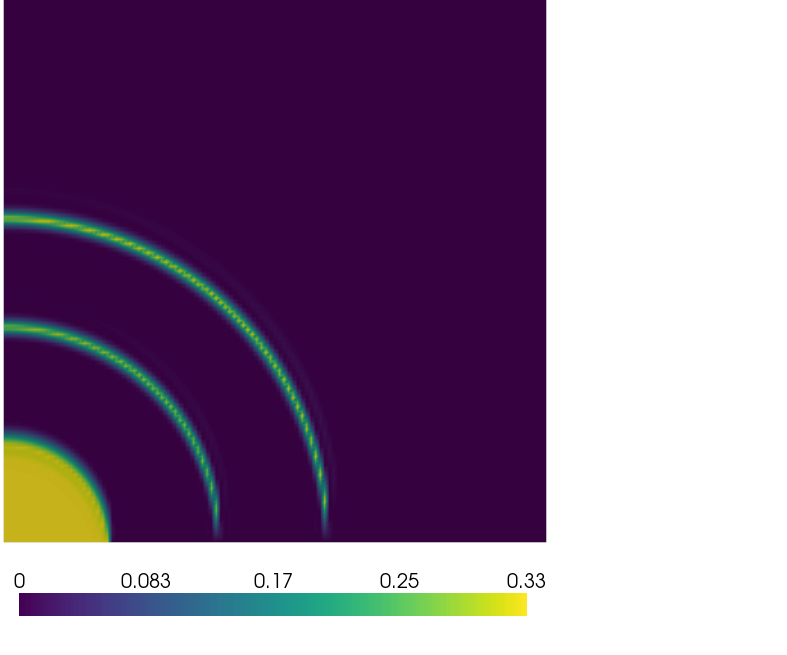}
	}
	\subfloat[Maximal eigenvalue for the BP--Euler solution.]{
		\includegraphics[width=0.45\textwidth]{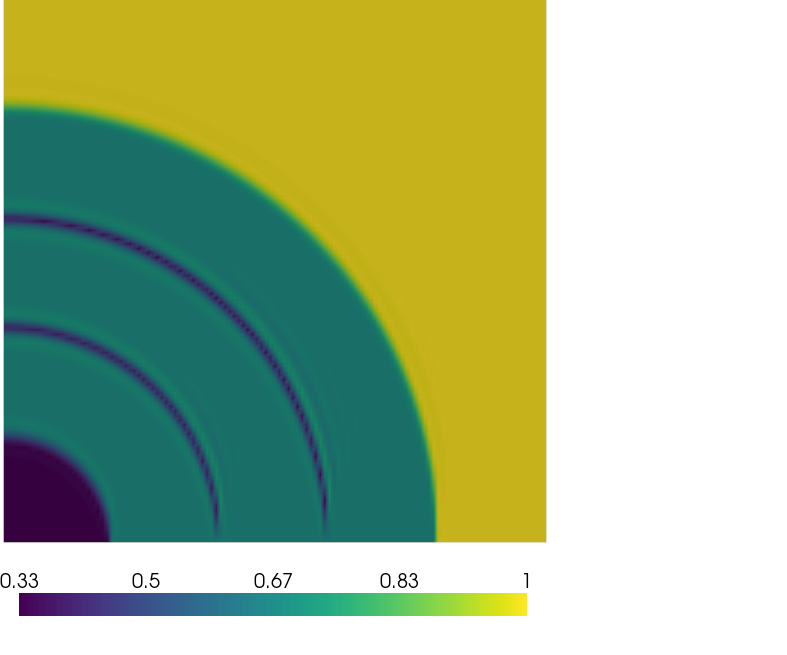}
	}\\
	\subfloat[Cross-section of the minimal eigenvalue along $y=x$.]{
		\includegraphics[width=0.45\textwidth]{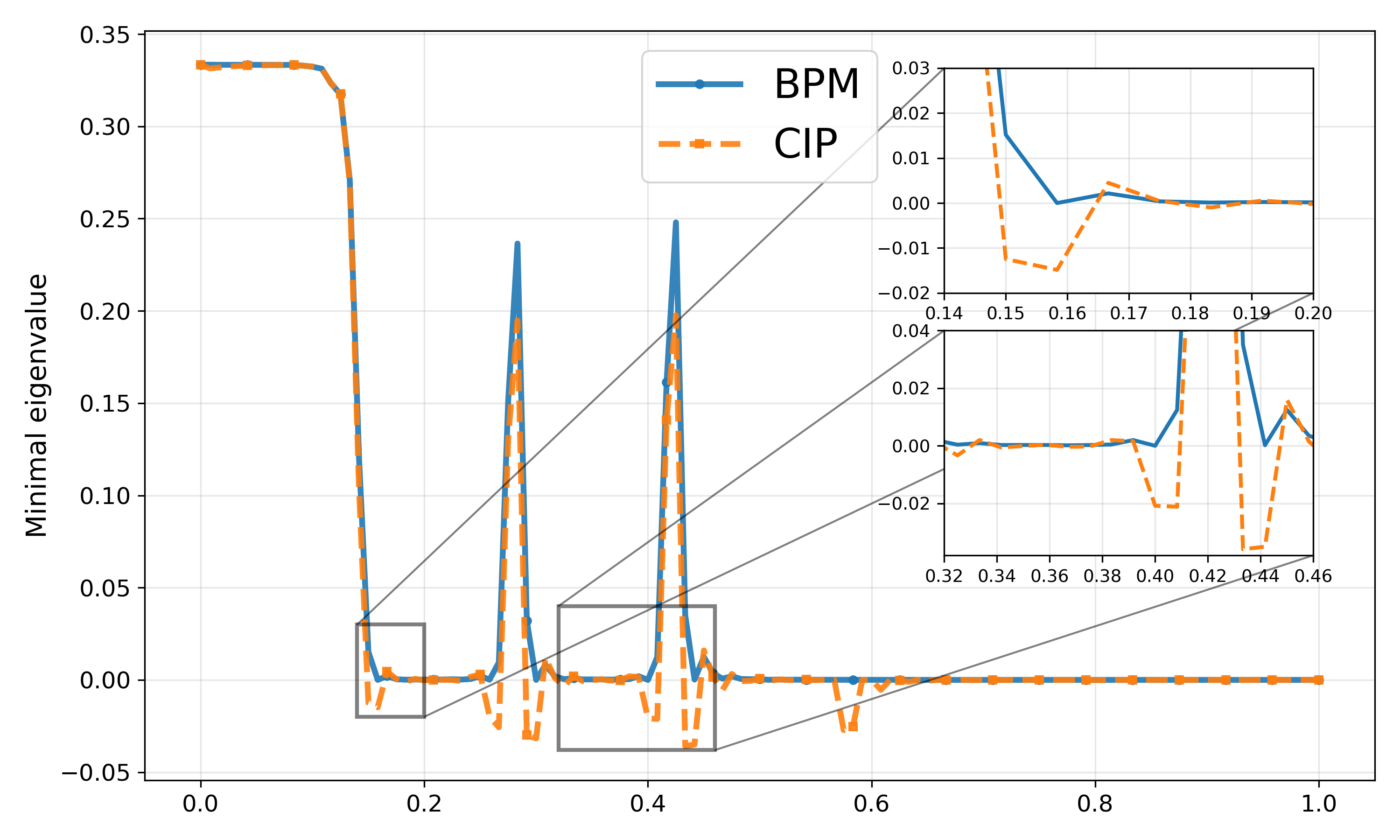}
	}
	\subfloat[Cross-section of the maximal eigenvalue along $y=x$.]{
		\includegraphics[width=0.45\textwidth]{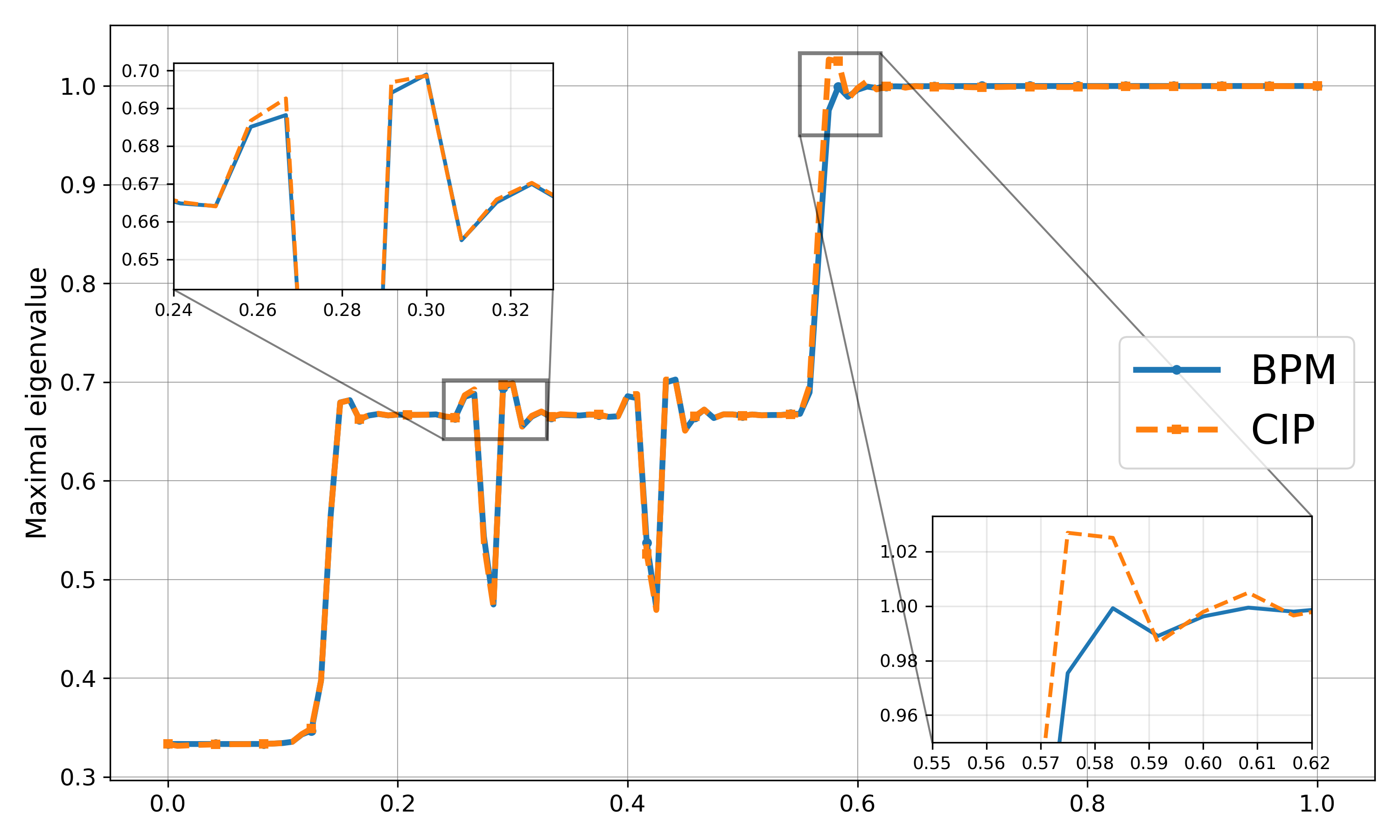}
	}\\
	\caption{Solution of \eqref{example22} at $T=4$ with discontinuous inflow data: minimal and maximal eigenvalues obtained with the CIP--Euler and BP--Euler schemes, together with cross-sections taken along $y=x$ ($\mathbb{P}_{1}$ elements).}
	\label{fig1}
\end{figure}

\begin{figure}[H]
	\centering
	\subfloat[Minimal eigenvalue for the CIP--CN solution.]{
		\includegraphics[width=0.45\textwidth]{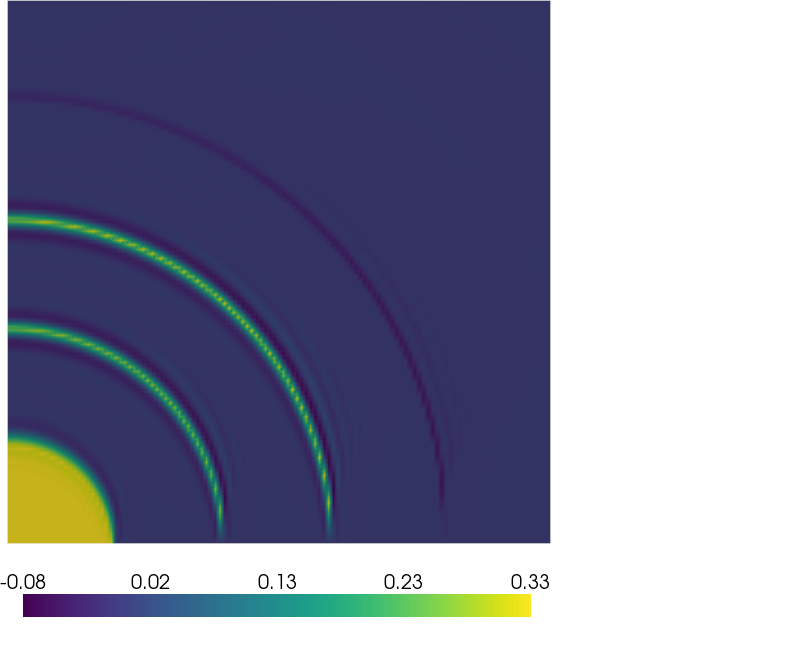}
	}
	\subfloat[Maximal eigenvalue for the CIP--CN solution.]{
		\includegraphics[width=0.45\textwidth]{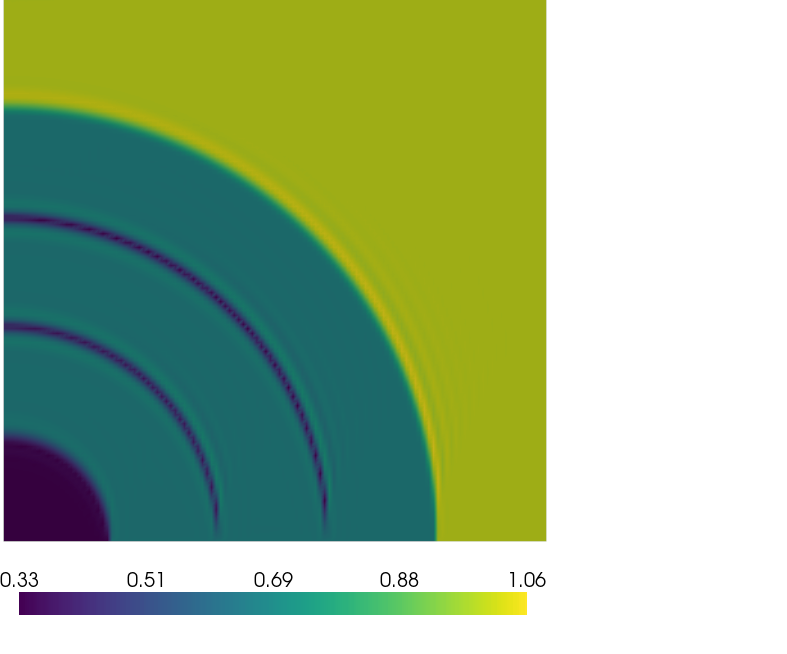}
	}\\
	\subfloat[Minimal eigenvalue for the BP--CN solution.]{
		\includegraphics[width=0.45\textwidth]{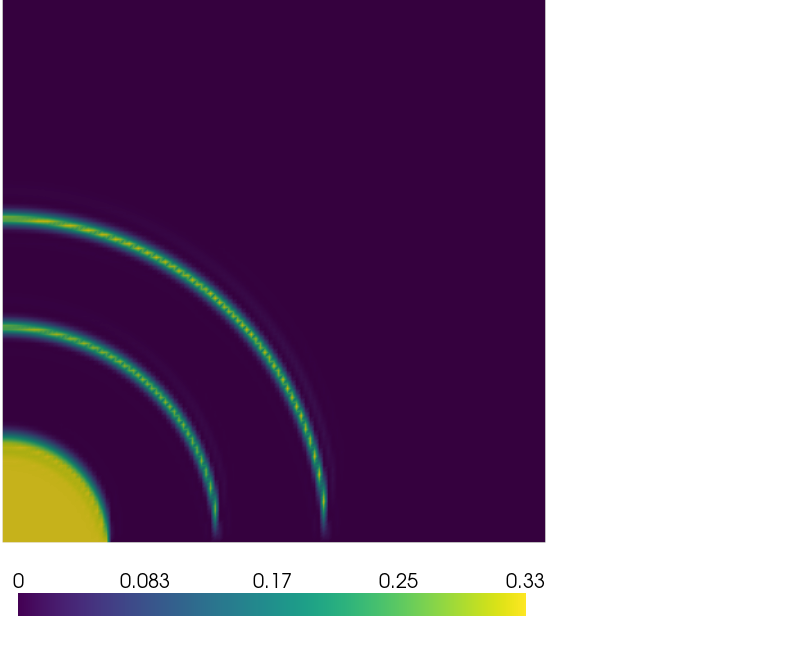}
	}
	\subfloat[Maximal eigenvalue for the BP--CN solution.]{
		\includegraphics[width=0.45\textwidth]{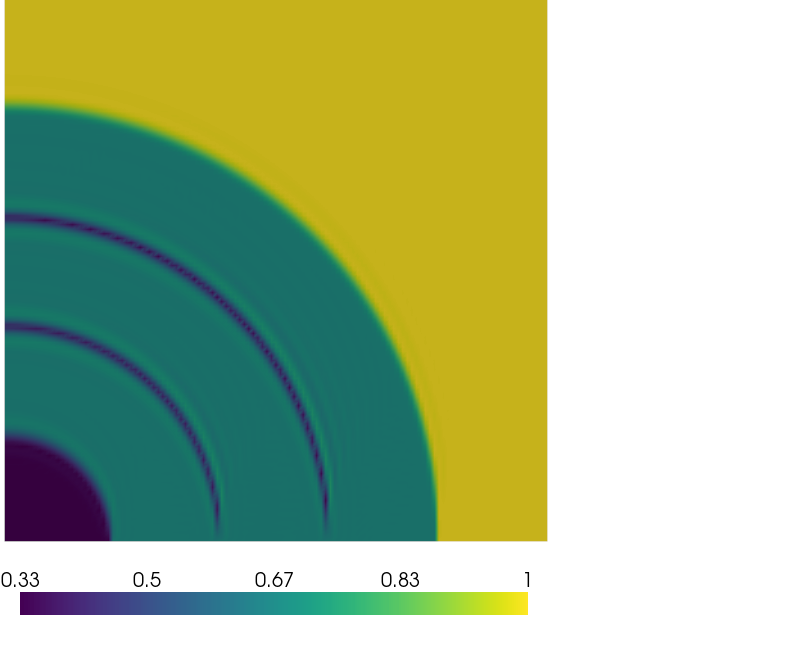}
	}\\
	\subfloat[Cross-section of the minimal eigenvalue along $y=x$.]{
		\includegraphics[width=0.45\textwidth]{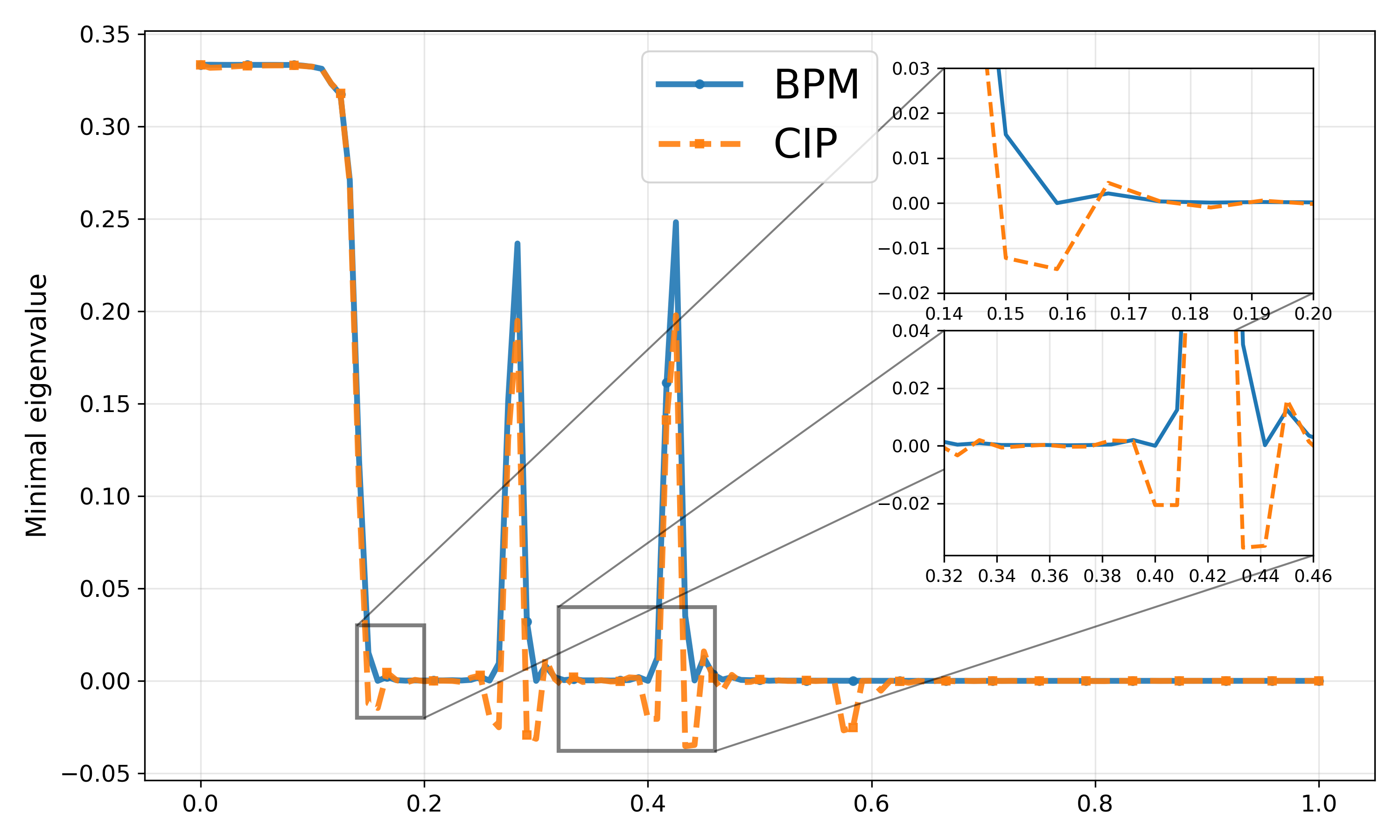}
	}
	\subfloat[Cross-section of the maximal eigenvalue along $y=x$.]{
		\includegraphics[width=0.45\textwidth]{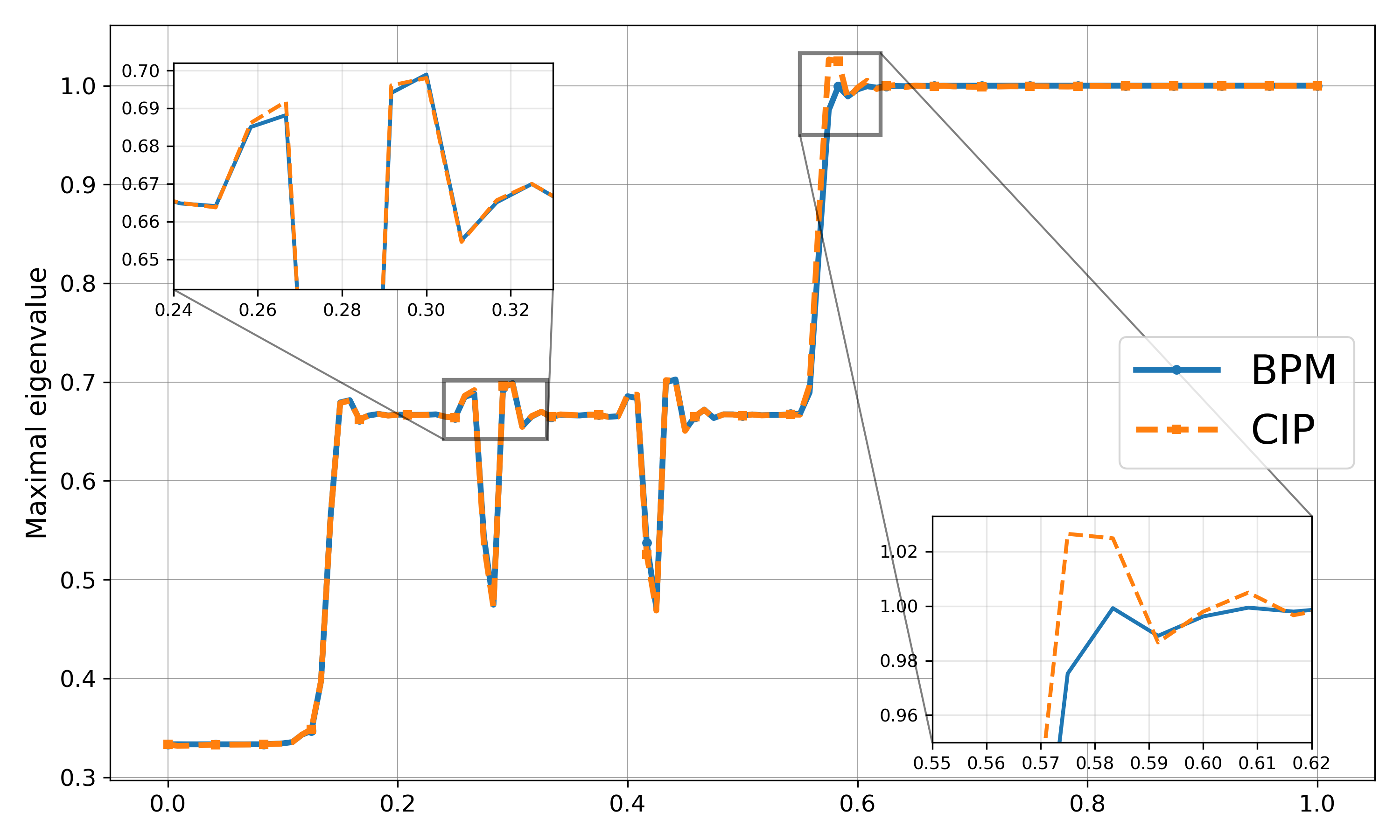}
	}\\
	\caption{Solution of \eqref{example22} at $T=4$ with discontinuous inflow data: minimal and maximal eigenvalues obtained with the CIP--CN and BP--CN schemes, together with cross-sections taken along $y=x$ ($\mathbb{P}_{1}$ elements).}
	\label{fig22}
\end{figure}

\begin{figure}[H]
	\centering
	\subfloat[Minimal eigenvalue for the CIP--Euler solution.]{
		\includegraphics[width=0.45\textwidth]{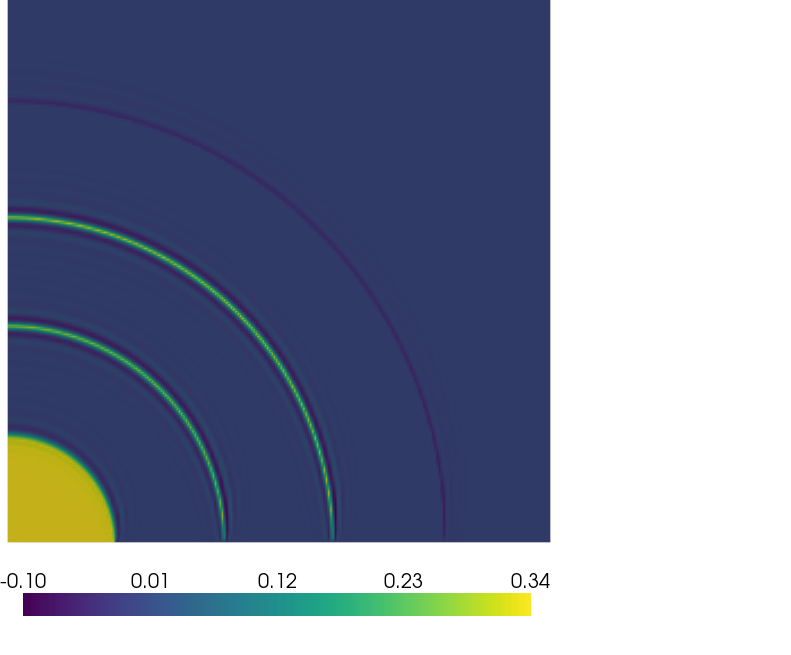}
	}
	\subfloat[Maximal eigenvalue for the CIP--Euler solution.]{
		\includegraphics[width=0.45\textwidth]{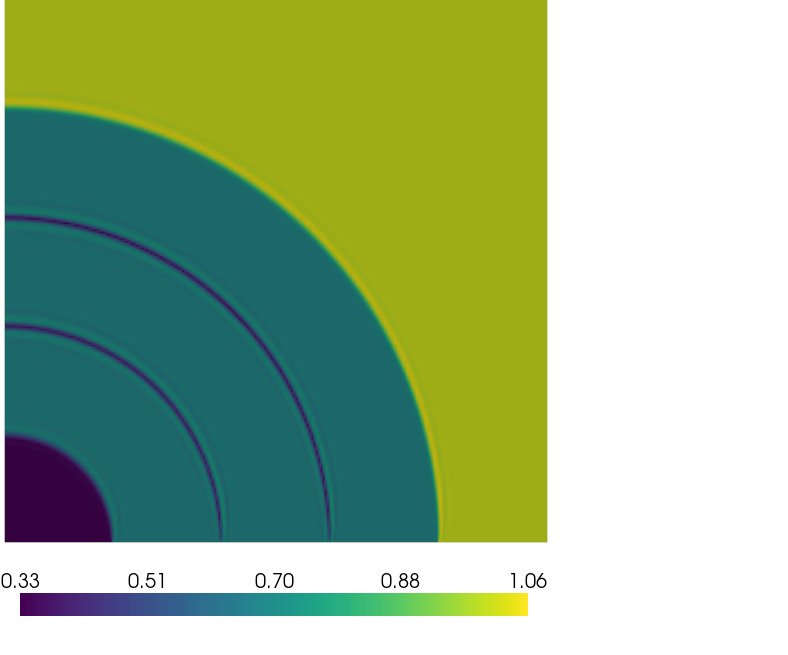}
	}\\
	\subfloat[Minimal eigenvalue for the BP--Euler solution.]{
		\includegraphics[width=0.45\textwidth]{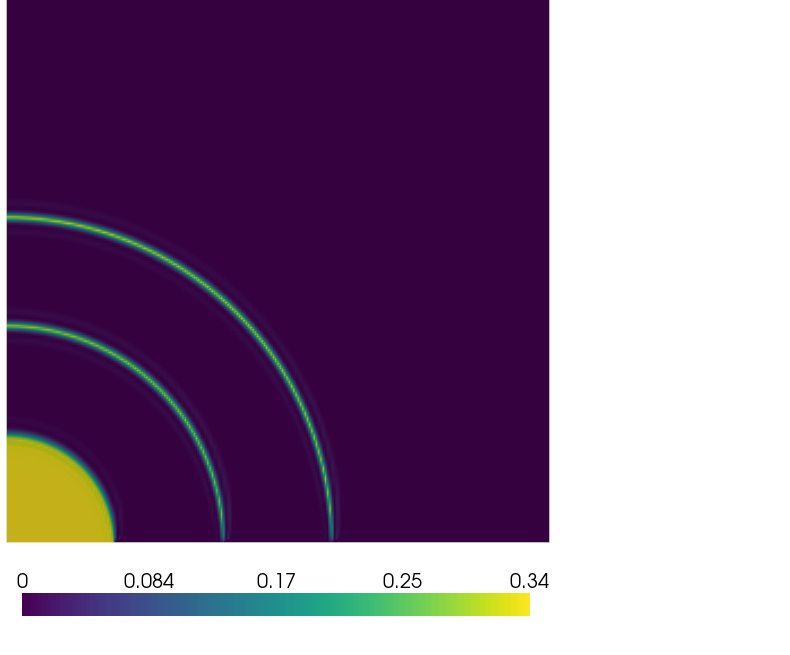}
	}
	\subfloat[Maximal eigenvalue for the BP--Euler solution.]{
		\includegraphics[width=0.45\textwidth]{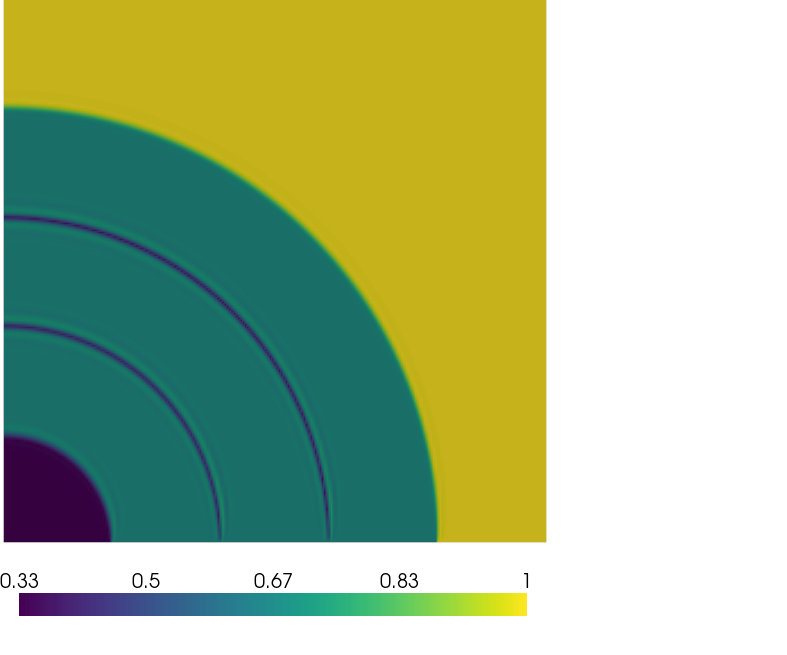}
	}\\
	\subfloat[Cross-section of the minimal eigenvalue along $y=x$.]{
		\includegraphics[width=0.45\textwidth]{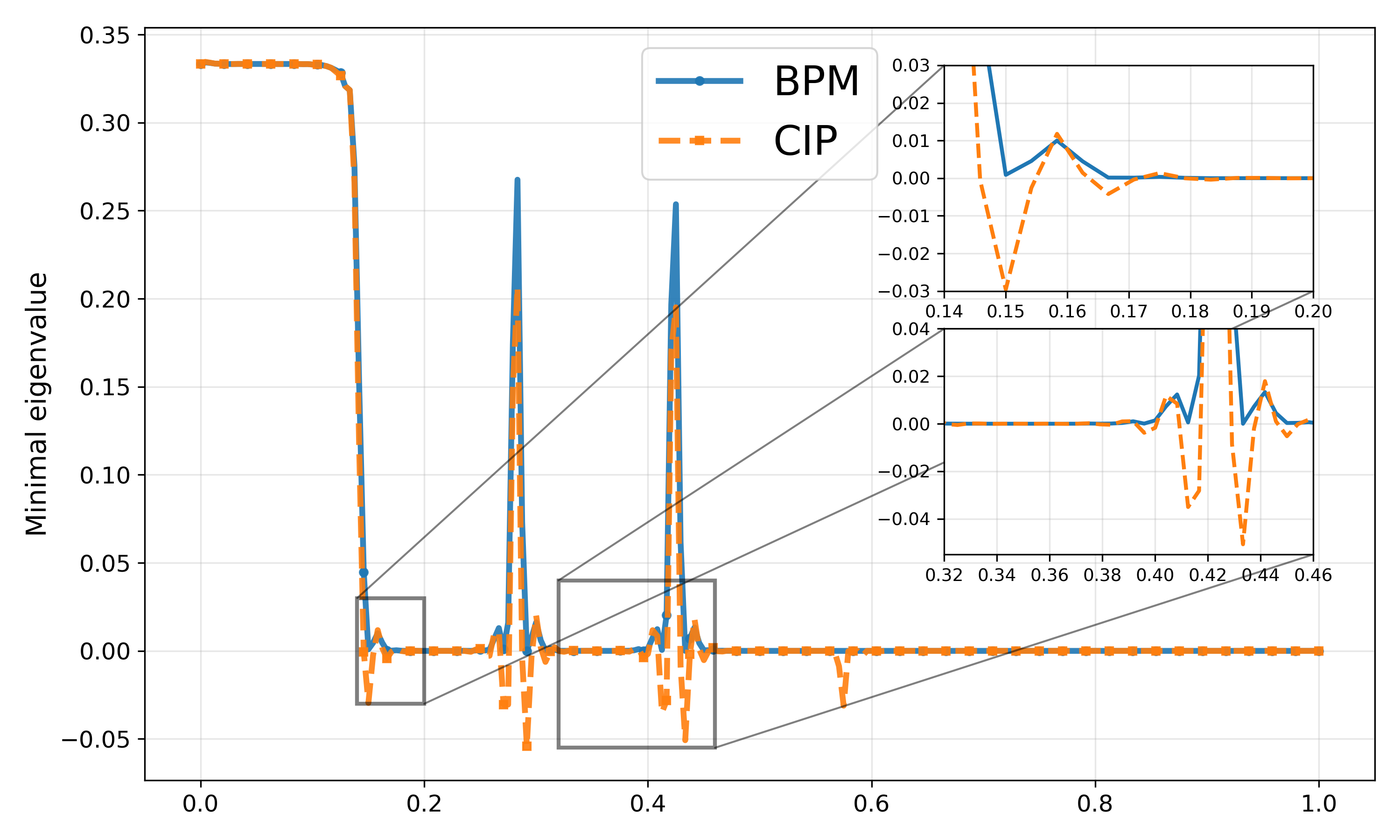}
	}
	\subfloat[Cross-section of the maximal eigenvalue along $y=x$.]{
		\includegraphics[width=0.45\textwidth]{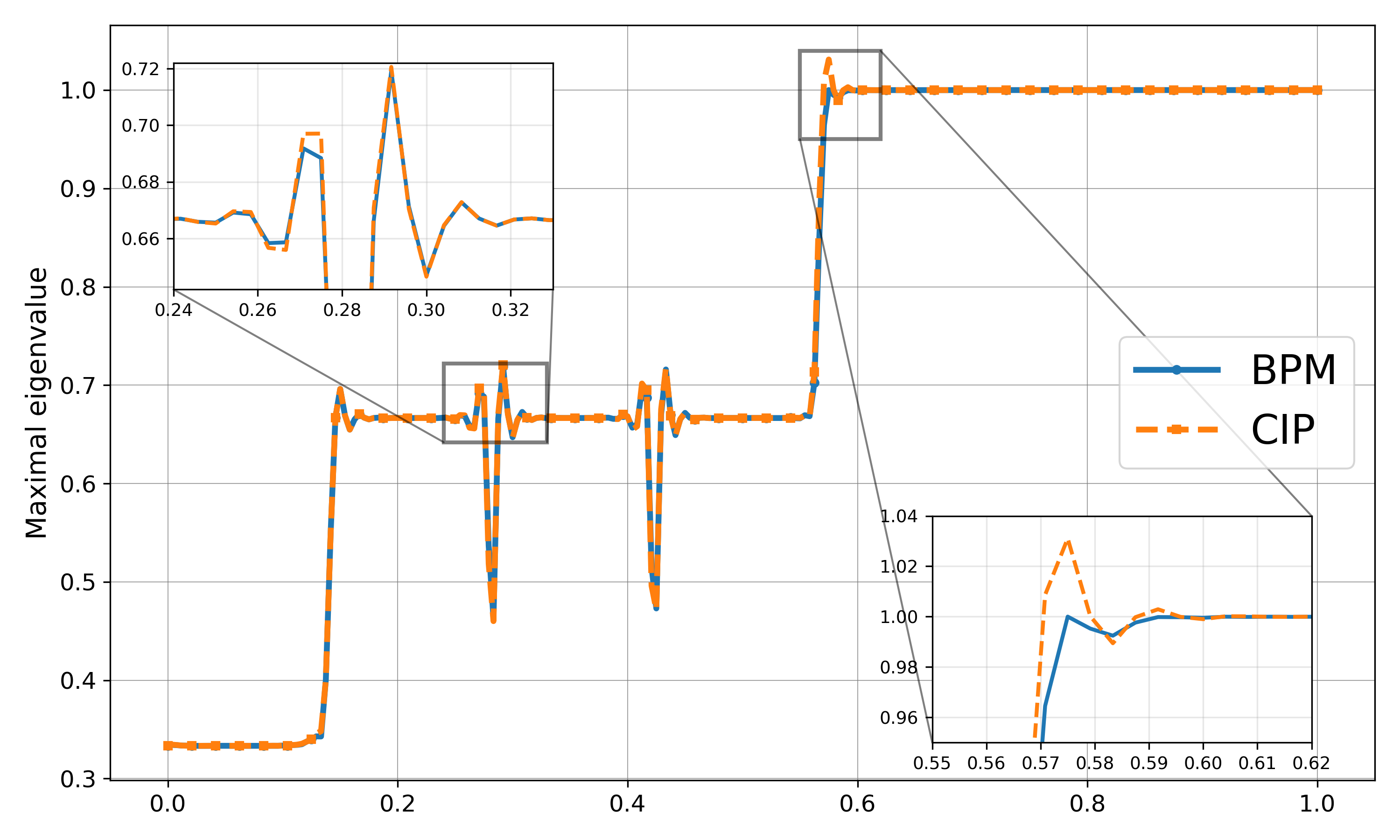}
	}\\
	\caption{Solution of \eqref{example22} at $T=4$ with discontinuous inflow data: minimal and maximal eigenvalues obtained with the CIP--Euler and BP--Euler schemes, together with cross-sections taken along $y=x$ ($\mathbb{P}_{2}$ elements).}
	\label{fig33}
\end{figure}

\begin{example}{\bf (Solid body rotation):}\label{example2}
We then test the method in a modified version of the solid body rotation benchmark originally proposed in~\cite{zalesak1979fully} and later extended in~\cite{leveque1996high}.
In~\cite{lohmann2017flux}, this benchmark was further extended to the case where the unknown is a $3\times 3$ tensor defined on the two-dimensional domain
$\Omega = (0,1)^2$ under the stationary, divergence-free velocity field
\begin{equation}
	\bbeta = \left( \tfrac{1}{2} - y, x - \tfrac{1}{2} \right)^T.
\end{equation}
Since for this test we consider $\mathcal{D}=\mathbf{0}$ and $\mu=0$,
after one full rotation, corresponding to $T = 2\pi$, the exact solution returns to the initial condition. Thus, the quality of the numerical solution is
assessed by comparing the numerical solution at $t=T$ with the initial data.

The initial condition is defined as
\begin{equation}\label{initialdata}
	\mathbf{U}_0(x,y) := \begin{cases}
		\mathbf{U}^{(1)} & \text{if } \sqrt{(x - 0.25)^2 + (y - 0.5)^2} \leq 0.15, \quad \text{`hump'}, \\
		\mathbf{U}^{(2)} & \text{if } \sqrt{(x - 0.5)^2 + (y - 0.25)^2} \leq 0.15, \quad \text{`cone'}, \\
		\mathbf{U}^{(3)} & \text{if } \sqrt{(x - 0.75)^2 + (y - 0.5)^2} \leq 0.15, \quad \text{`semi-ellipse'}, \\
		\mathbf{U}^{(4)} & \text{if } \sqrt{(x - 0.5)^2 + (y - 0.75)^2} \leq 0.15, \quad \text{`slotted cylinder'}, \\
		\mathbf{0} & \text{otherwise},
	\end{cases}
\end{equation}
which consists of four functions defined on circles centred at different points.
In what follows we use the shorthand $r := \sqrt{x^{2}+y^{2}}$.
The positive semidefinite tensors
$\mathbf{U}^{(1)}$, $\mathbf{U}^{(2)}$, $\mathbf{U}^{(3)}$ and $\mathbf{U}^{(4)}$
are specified through their eigenvalue decompositions as follows:
\[
\mathbf{U}^{(1)}(x,y) :=
\begin{pmatrix}
	1 & 0 & 0 \\
	0 & \cos\phi & \sin\phi \\
	0 & \sin\phi & -\cos\phi
\end{pmatrix}
\frac{1}{r}
\begin{pmatrix}
	x & y & 0 \\
	y & -x & 0 \\
	0 & 0 & r
\end{pmatrix}
\begin{pmatrix}
	u_1^{(1)} & 0 & 0 \\
	0 & u_2^{(1)} & 0 \\
	0 & 0 & u_3^{(1)}
\end{pmatrix}
\frac{1}{r}
\begin{pmatrix}
	x & y & 0 \\
	y & -x & 0 \\
	0 & 0 & r
\end{pmatrix}
\begin{pmatrix}
	1 & 0 & 0 \\
	0 & \cos\phi & \sin\phi \\
	0 & \sin\phi & -\cos\phi
\end{pmatrix},
\]
where
\[
u_1^{(1)} = \left( \frac{1}{2} (1+\cos(\pi r)) \right)^3, \quad
u_2^{(1)} = \left( \frac{1}{2} (1+\cos(\pi r)) \right)^2, \quad
u_3^{(1)} = \frac{1}{2} (1+\cos(\pi r)), \quad
\phi = \frac{1}{2} \operatorname{atan2}(x,y),
\]
\[
\mathbf{U}^{(2)}(x,y) := \frac{1}{10}
\begin{pmatrix}
	10 & 0 & 0 \\
	0 & 8 & 6 \\
	0 & 6 & -8
\end{pmatrix}
\frac{1}{r}
\begin{pmatrix}
	x & y & 0 \\
	y & -x & 0 \\
	0 & 0 & r
\end{pmatrix}
\begin{pmatrix}
	u_1^{(2)} & 0 & 0 \\
	0 & u_a^{(2)} & 0 \\
	0 & 0 & u_b^{(2)}
\end{pmatrix}
\frac{1}{r}
\begin{pmatrix}
	x & y & 0 \\
	y & -x & 0 \\
	0 & 0 & r
\end{pmatrix}
\frac{1}{10}
\begin{pmatrix}
	10 & 0 & 0 \\
	0 & 8 & 6 \\
	0 & 6 & -8
\end{pmatrix},
\]
where $u^{(2)}_1 =\frac{1}{2}- \frac{1}{2} r$, $u^{(2)}_a = \frac{1}{2} - \frac{1}{2} |x|$, and $u^{(2)}_b = 1 - r$,
\begin{equation}
	\mathbf{U}^{(3)}(x,y) := \begin{pmatrix} u^{(3)}_1 & 0 & 0 \\ 0 & u^{(3)}_1 & 0 \\ 0 & 0 & u^{(3)}_1 \end{pmatrix},
\end{equation}
where
\[
u_1^{(3)} = u_2^{(3)} = u_3^{(3)} = \sqrt{1 - r^2},
\]
\[
\mathbf{U}^{(4)}(x,y) :=
\begin{cases}
	\frac{1}{10}
	\begin{pmatrix}
		-8 & 6 & 0 \\
		6 & 8 & 0 \\
		0 & 0 & 10
	\end{pmatrix}
	\begin{pmatrix}
		u_3^{(4)} & 0 & 0 \\
		0 & u_1^{(4)} & 0 \\
		0 & 0 & u_2^{(4)}
	\end{pmatrix}
	\frac{1}{10}
	\begin{pmatrix}
		-8 & 6 & 0 \\
		6 & 8 & 0 \\
		0 & 0 & 10
	\end{pmatrix}, & (|x| \geq \frac{1}{6} \vee y \geq \frac{2}{3}) \wedge (x > 0), \\[0.2cm]
	\frac{1}{10}
	\begin{pmatrix}
		-8 & 6 & 0 \\
		6 & 8 & 0 \\
		0 & 0 & 10
	\end{pmatrix}
	\begin{pmatrix}
		u_3^{(4)} & 0 & 0 \\
		0 & u_3^{(4)} & 0 \\
		0 & 0 & u_2^{(4)}
	\end{pmatrix}
	\frac{1}{10}
	\begin{pmatrix}
		-8 & 6 & 0 \\
		6 & 8 & 0 \\
		0 & 0 & 10
	\end{pmatrix}, & (|x| \geq \frac{1}{6} \vee y \geq \frac{2}{3}) \wedge (x < 0), \\[0.2cm]
	\mathbf{0}, & \text{elsewhere},
\end{cases}
\]
where
\[
u_1^{(4)} = 0.1, \quad u_2^{(4)} = 0.45, \quad u_3^{(4)} = 1.
\]

The intermediate and largest eigenvalues of $\mathbf{U}^{(2)}$ are given by
\[
u_2^{(2)} =
\begin{cases}
	u_a^{(2)}, & |x| \geq 2r - 1, \\
	u_b^{(2)}, & |x| < 2r - 1,
\end{cases}
\hspace{1cm}
u_3^{(2)} =
\begin{cases}
	u_b^{(2)}, & |x| \geq 2r - 1, \\
	u_a^{(2)}, & |x| < 2r - 1.
\end{cases}
\]
The minimal and maximal eigenvalues of $\mathbf{U}_0$ are $0$ and $1$, respectively. Thus, we take $\epsilon=0$ and $\kappa=1$.

The labels \emph{hump}, \emph{cone}, \emph{semi-ellipse} and \emph{slotted cylinder} correspond to the design of the respective minimal and maximal eigenvalues,
reflecting their similarity to the scalar solid-body rotation benchmark.
The bodies undergo a counter-clockwise rotation, completing one full revolution at $t = 2\pi$.
For the space discretisation in this example, we employ piecewise linear elements (that is, $k=1$).
We set $P = 121$, which yields a uniform grid with $2\times 121\times 121$ mesh cells,
and use a time step of $\Delta t = 5\times 10^{-4}$. In all experiments in this example we use $\omega=0.01$ in the iterative method \eqref{iterative12} and $\gamma=10^{-3}$ in the stabilisation term \eqref{eq10}.

Figure~\ref{fig11111} illustrates the minimal and maximal eigenvalues of the initial data \eqref{initialdata}.
Figure~\ref{fig111} presents the minimal and maximal eigenvalues of the numerical solution obtained by BP--Euler and CIP--Euler at the final time $t=T$.
To compare the performance of these two approaches, we consider a cross-section along the line $y=0.8$ extracted from both the minimal and maximal eigenvalues of the initial data \eqref{initialdata} and from the corresponding numerical solutions.
This comparison enables us to assess how effectively the bound-preserving method prevents overshoots and undershoots compared to the CIP method.
From the eigenvalue plots and cross-sections, it is evident that the BP method successfully enforces the lower and upper bounds on the eigenvalues of the solution.
In the cross-section plots, \emph{initial} denotes the eigenvalues of the initial datum.

Figure~\ref{fig2} presents analogous results for the BP--CN and CIP--CN schemes.
While BP--CN ensures that the solution remains within the physical bounds, CIP--CN may exhibit violations of these bounds, as is evident in both the eigenvalue plots and the cross-sections at $y=0.8$.

\end{example}

\begin{figure}[H]
\centering
\subfloat[Minimal eigenvalue of the initial datum.]{
	\includegraphics[width=0.45\textwidth]{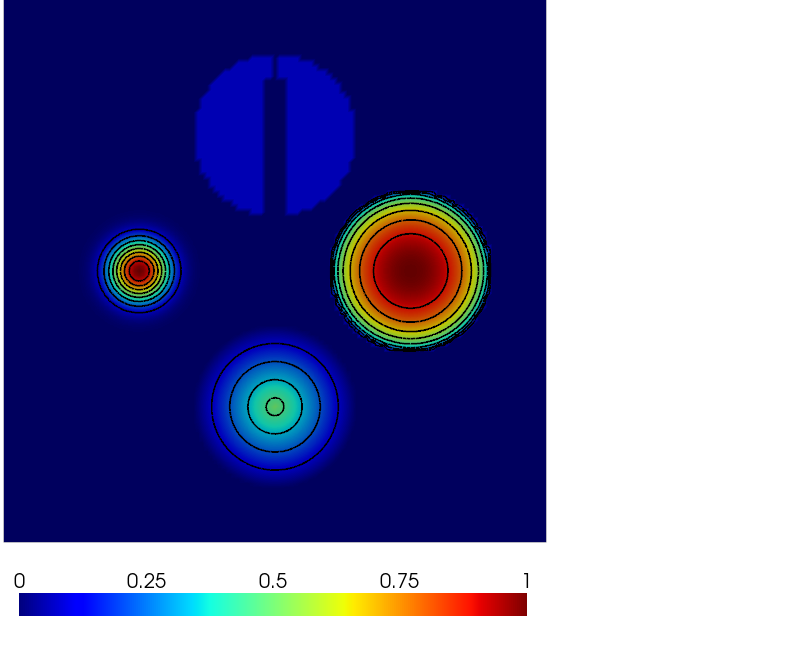}
}
\subfloat[Maximal eigenvalue of the initial datum.]{
	\includegraphics[width=0.45\textwidth]{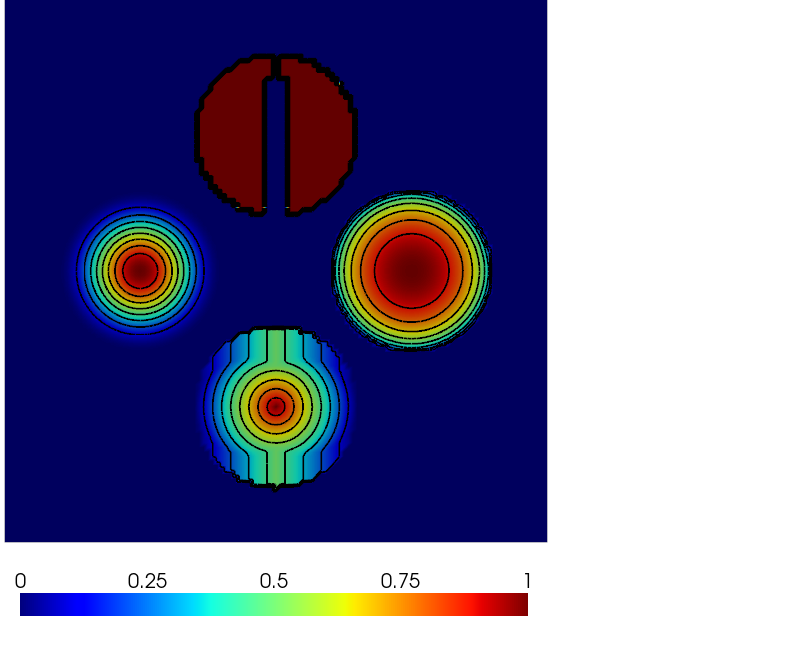}
}
\caption{Minimal and maximal eigenvalues of the initial datum \eqref{initialdata}.}
\label{fig11111}
\end{figure}

\begin{figure}[H]
	\centering
	\subfloat[Minimal eigenvalue for the CIP--Euler solution.]{
		\includegraphics[width=0.45\textwidth]{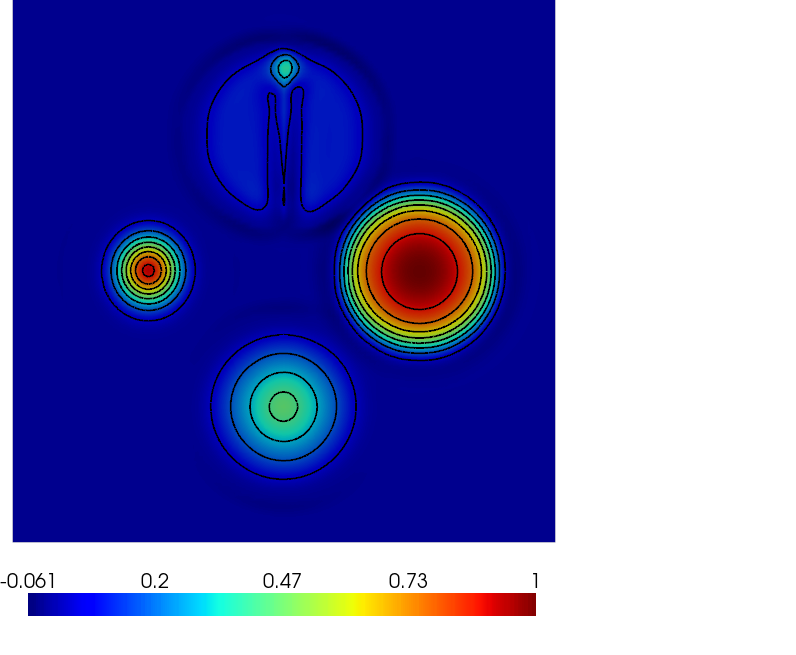}
	}
	\subfloat[Maximal eigenvalue for the CIP--Euler solution.]{
		\includegraphics[width=0.45\textwidth]{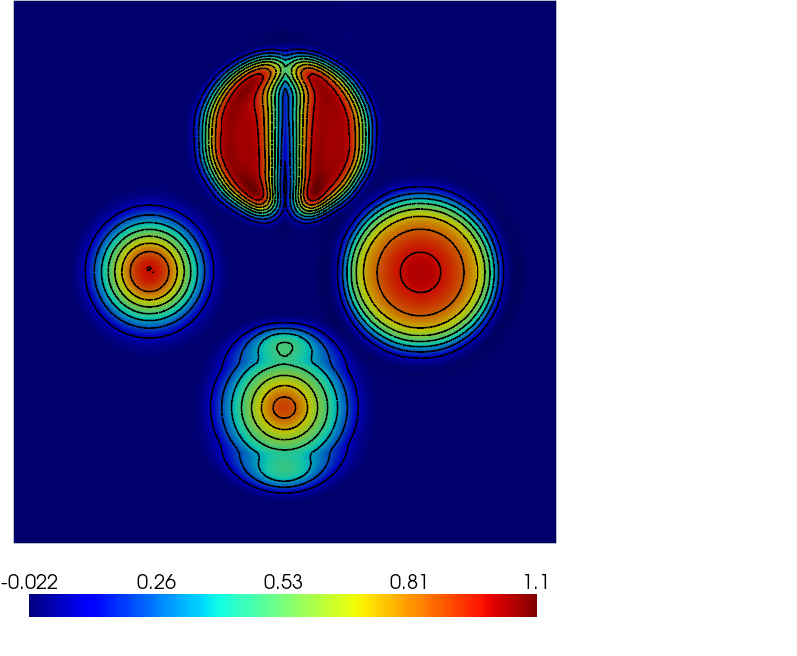}
	}\\
	\subfloat[Minimal eigenvalue for the BP--Euler solution.]{
		\includegraphics[width=0.45\textwidth]{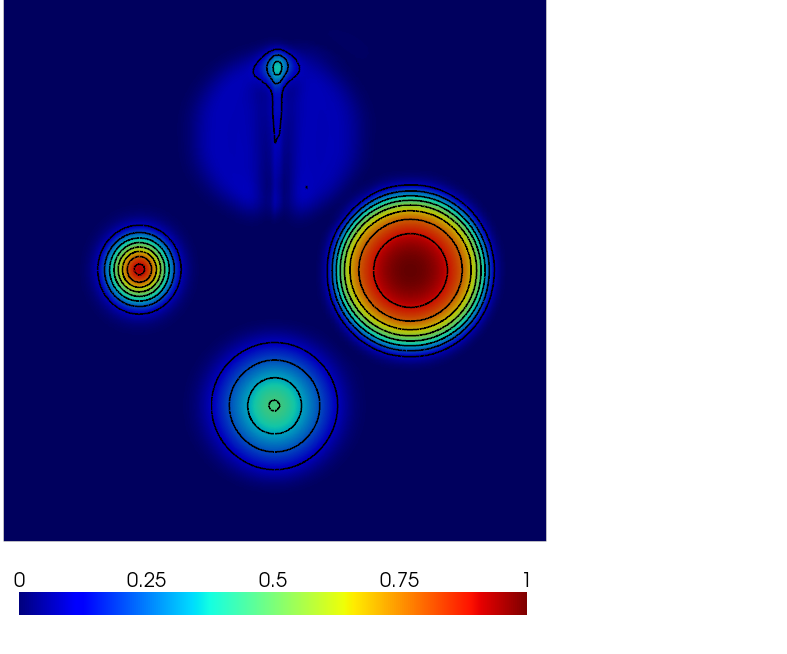}
	}
	\subfloat[Maximal eigenvalue for the BP--Euler solution.]{
		\includegraphics[width=0.45\textwidth]{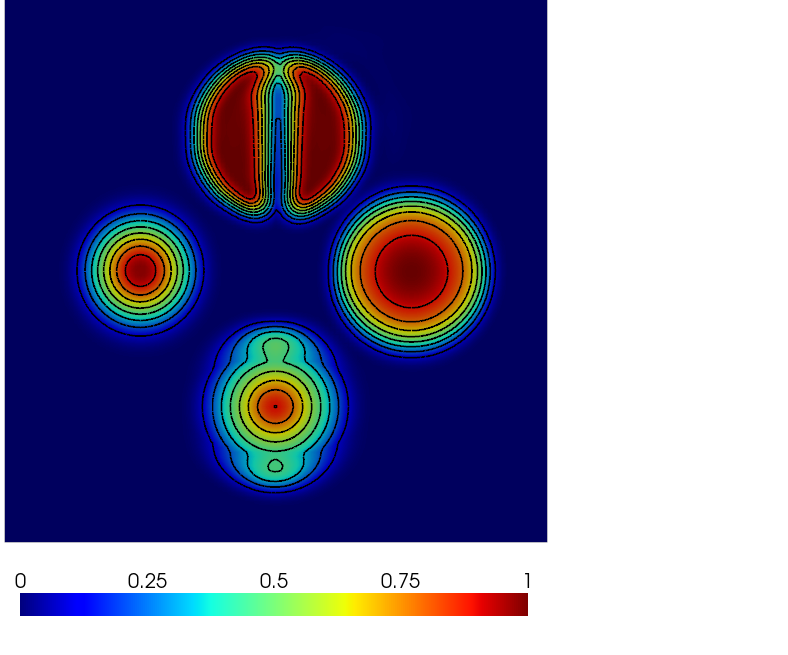}
	}\\
	\subfloat[Cross-section of the minimal eigenvalue along $y=0.8$.]{
		\includegraphics[width=0.45\textwidth]{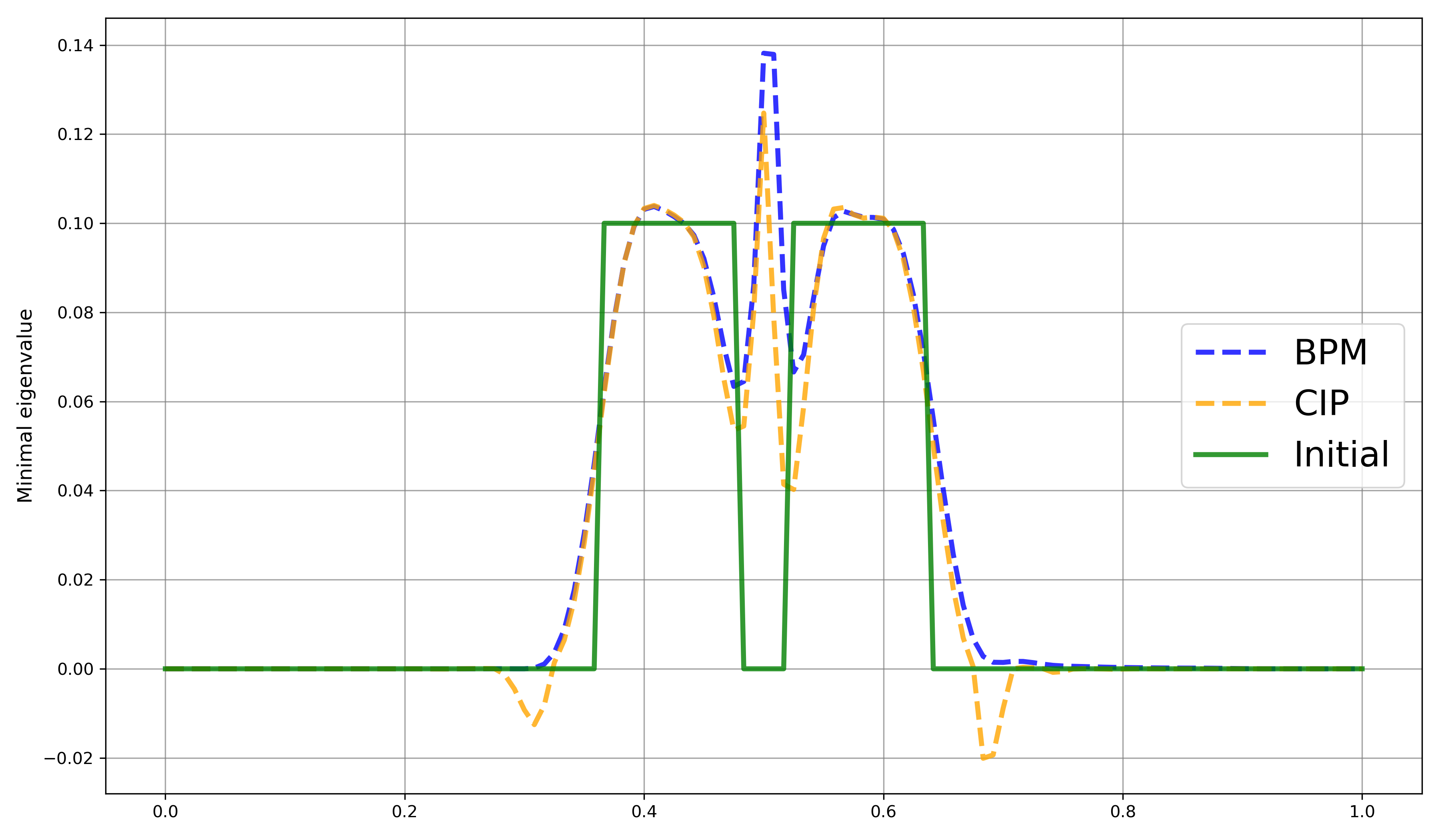}
	}
	\subfloat[Cross-section of the maximal eigenvalue along $y=0.8$.]{
		\includegraphics[width=0.45\textwidth]{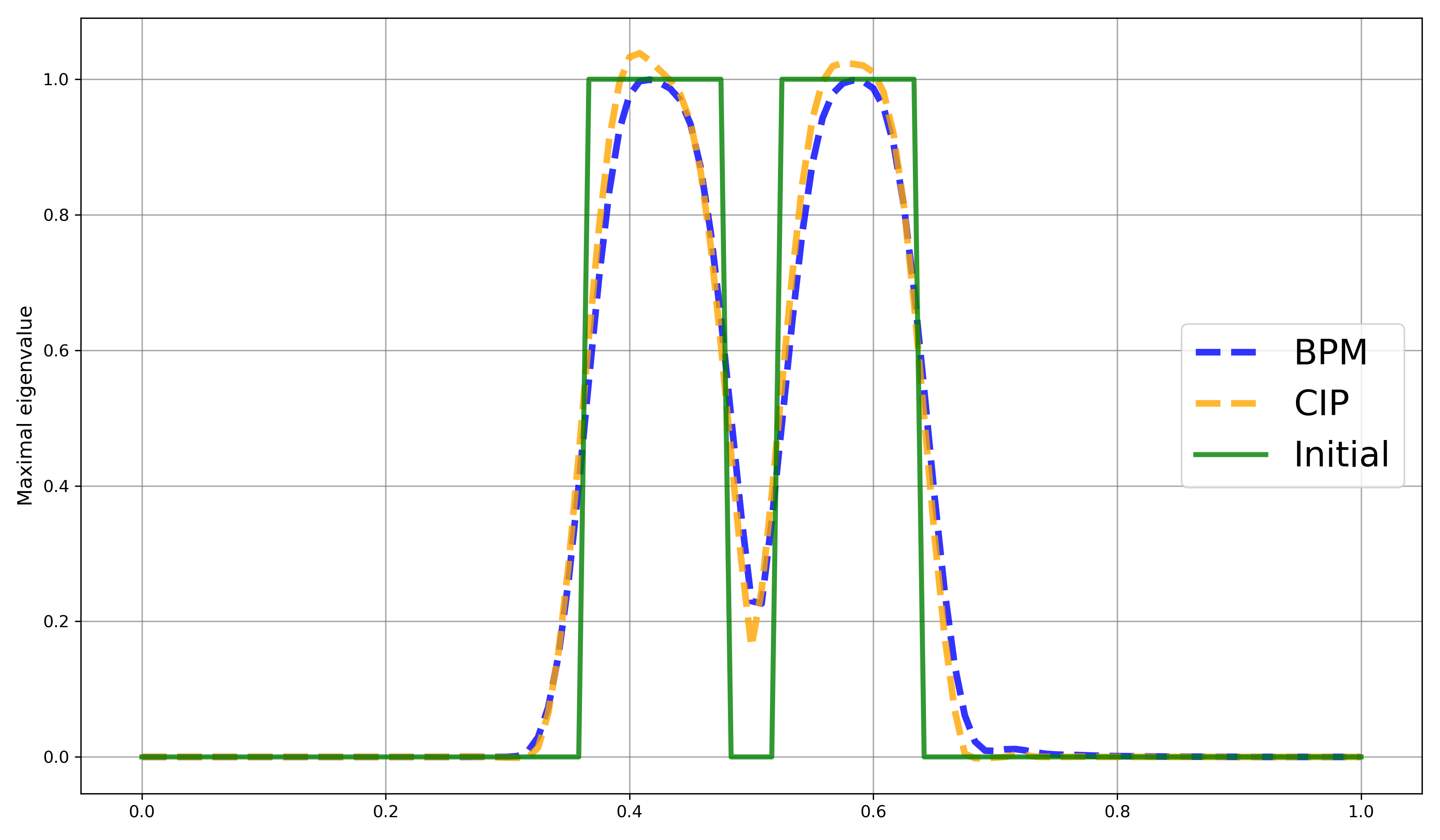}
	}
	\caption{Example~\ref{example2} at $t=T=2\pi$: minimal and maximal eigenvalues obtained with the CIP--Euler and BP--Euler schemes, together with cross-sections along $y=0.8$.}
	\label{fig111}
\end{figure}

\begin{figure}[H]
	\centering
	\subfloat[Minimal eigenvalue for the CIP--CN solution.]{
		\includegraphics[width=0.45\textwidth]{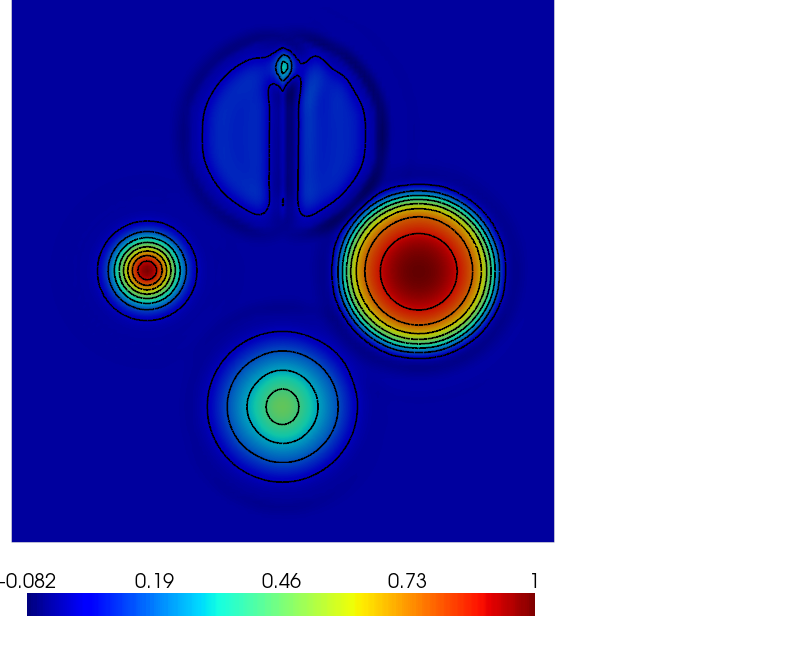}
	}
	\subfloat[Maximal eigenvalue for the CIP--CN solution.]{
		\includegraphics[width=0.45\textwidth]{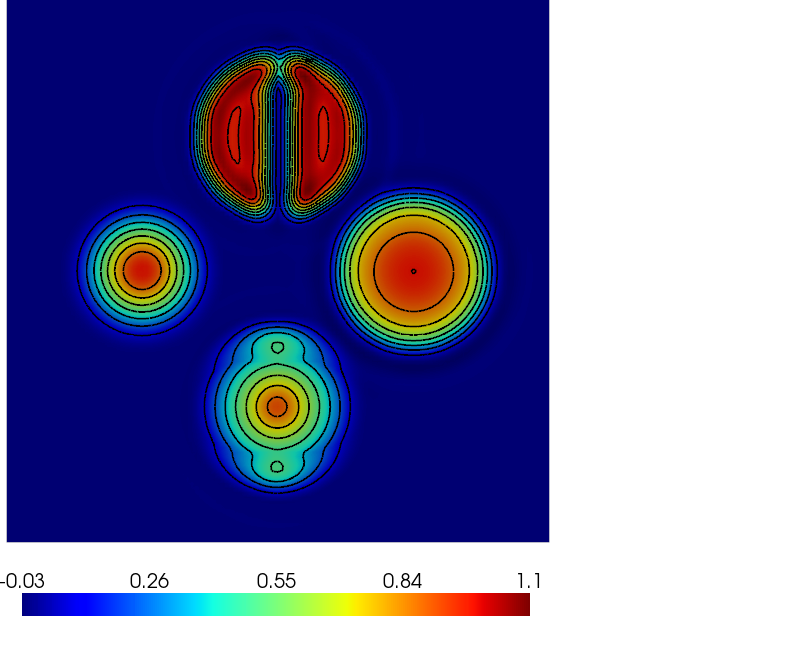}
	}\\
	\subfloat[Minimal eigenvalue for the BP--CN solution.]{
		\includegraphics[width=0.45\textwidth]{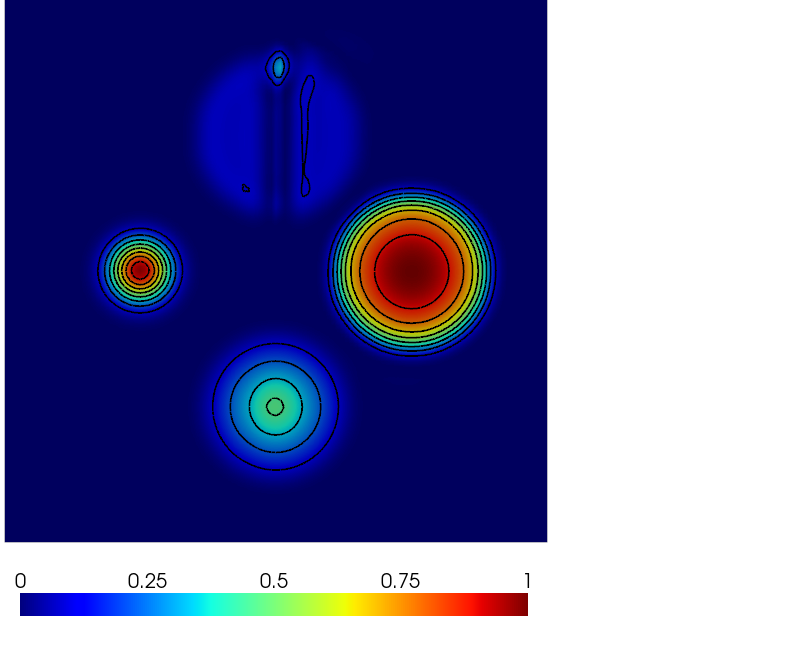}
	}
	\subfloat[Maximal eigenvalue for the BP--CN solution.]{
		\includegraphics[width=0.45\textwidth]{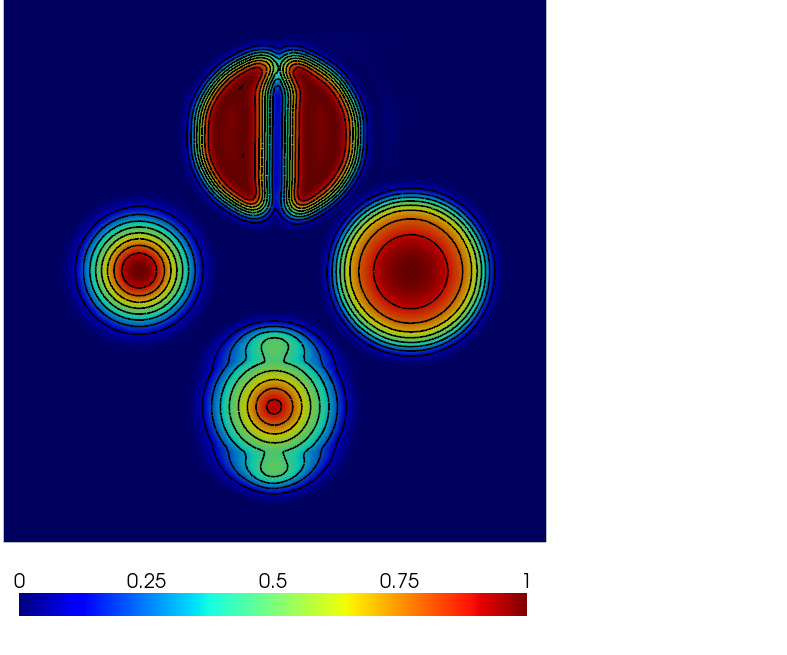}
	}\\
	\subfloat[Cross-section of the minimal eigenvalue along $y=0.8$.]{
		\includegraphics[width=0.45\textwidth]{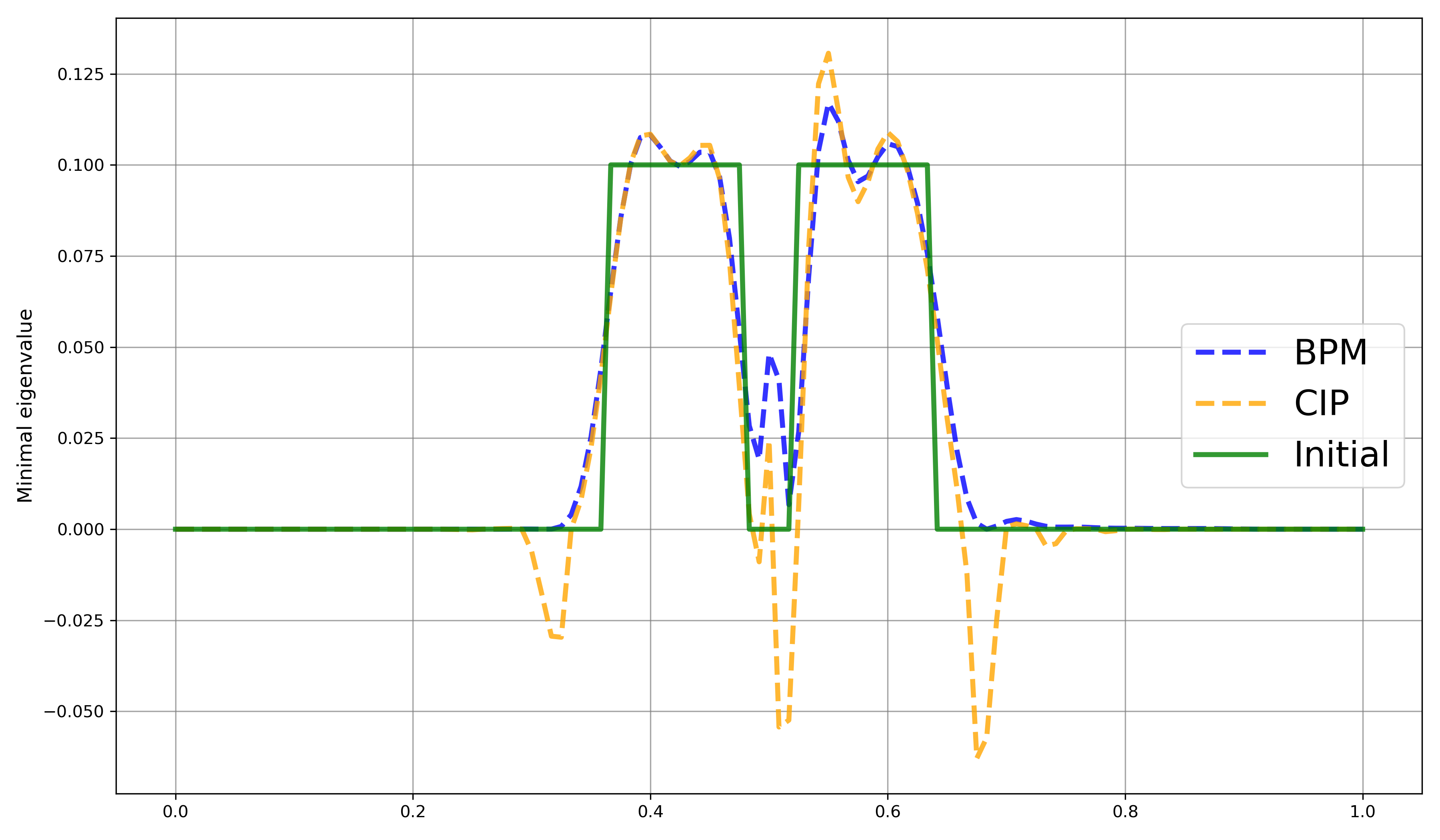}
	}
	\subfloat[Cross-section of the maximal eigenvalue along $y=0.8$.]{
		\includegraphics[width=0.45\textwidth]{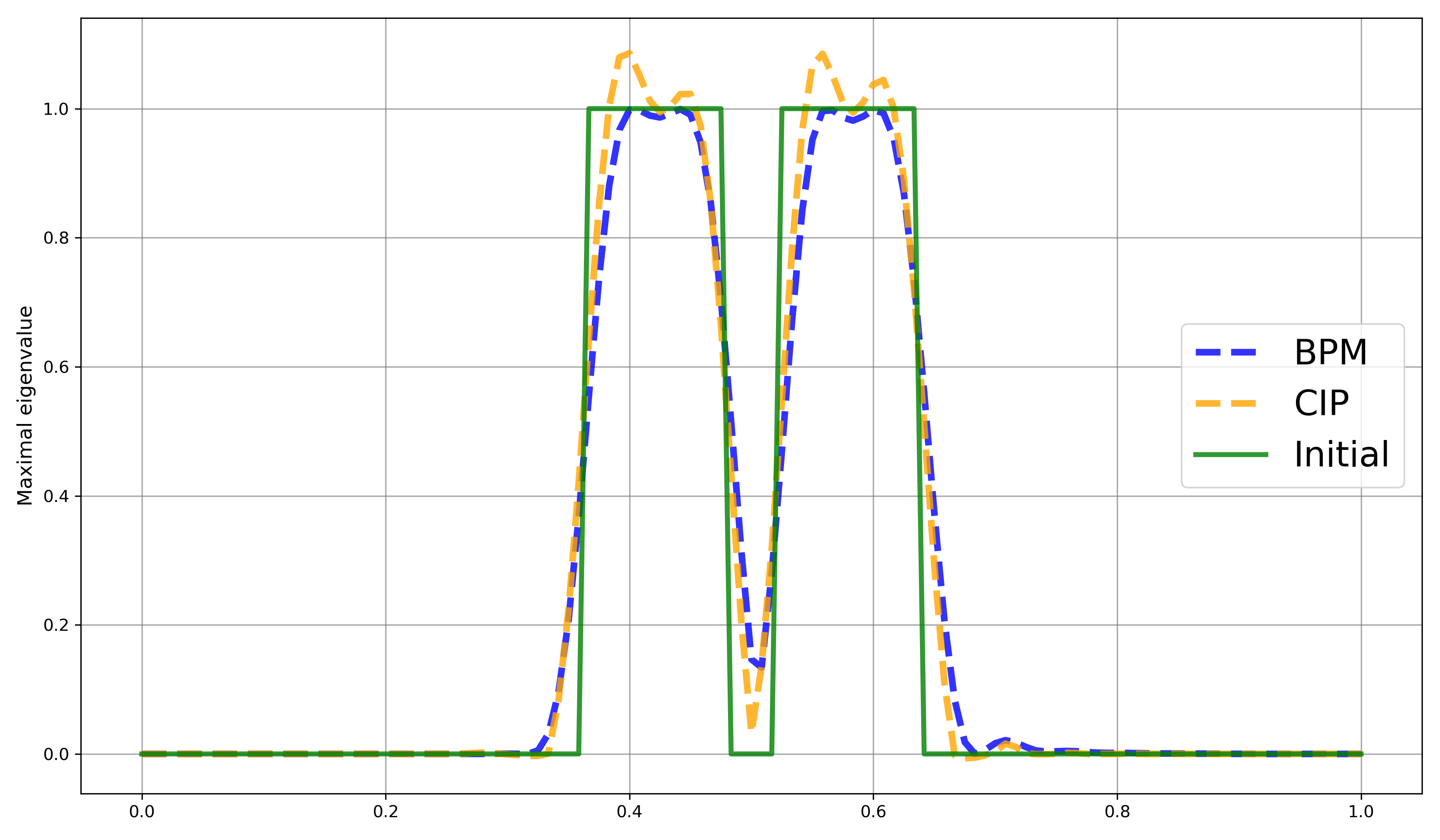}
	}
	\caption{Example~\ref{example2} at $t=T=2\pi$: minimal and maximal eigenvalues obtained with the CIP--CN and BP--CN schemes, together with cross-sections along $y=0.8$.}
	\label{fig2}
\end{figure}

\section{Conclusions and outlook}
\label{sec:conc}

In this work we have extended the nodally bound-preserving idea to the
numerical approximation of time-dependent tensor-valued partial differential equations. 
The method is built using a variational inequality posed over a set of acceptable
functions, which is proven to be well-posed, and yielding to an unconditionally
stable and optimally-convergent discretisation. 
Our approach offers several advantages over existing methods. Unlike
post-processing techniques, our method directly enforces the
eigenvalue constraints as part of the variational formulation,
ensuring that the discrete solution satisfies the physical constraints
at the degrees of freedom at each time step. In contrast
to transformation-based approaches, our method maintains a linear
structure for linear problems in the unconstrained limit,
  and in the sense that the baseline discretisation remains linear,
simplifying both the analysis and implementation. Moreover, the CIP
stabilisation provides robustness in the convection-dominated regime
without compromising accuracy in diffusion-dominated regions
through its consistent interior penalty formulation. 

There are still several questions that remain open.  For example, the extension of the analysis
to higher-order time discretisations,   exploring the possibility of combining the
current method with IMEX-type schemes for nonlinear problems, and whether this framework
can be extended to nonlinear conservation laws are very interesting open
questions that will be subject of future research.

\section*{Acknowledgements}
The work of AA, GRB, and TP has been partially supported by the
Leverhulme Trust Research Project Grant No. RPG-2021-238.  TP is also
partially supported by EPRSC grants
\href{https://gow.epsrc.ukri.org/NGBOViewGrant.aspx?GrantRef=EP/W026899/2}
{EP/W026899/2},
\href{https://gow.epsrc.ukri.org/NGBOViewGrant.aspx?GrantRef=EP/X017206/1}
{EP/X017206/1}
and
\href{https://gow.epsrc.ukri.org/NGBOViewGrant.aspx?GrantRef=EP/X030067/1}
{EP/X030067/1}.

\bibliographystyle{alpha}
\bibliography{refs.bib}

\end{document}